\newcommand{\kommentar}[1]{}
\newcommand{\N}{\mathbb{N}}
\newcommand{\Z}{\mathbb{Z}}
\newcommand{\Q}{\mathbb{Q}}
\newcommand{\R}{\mathbb{R}}
\newcommand{\C}{\mathbb{C}}
\newcommand{\E}{\mathbb{E}}
\newcommand{\F}{\mathbb{F}}
\newcommand{\W}{\widetilde{W}}
\newcommand{\CA}{\mathcal{A}}
\newcommand{\CC}{\mathcal{C}}
\newcommand{\CE}{\mathcal{E}}
\newcommand{\CF}{\mathcal{F}}
\newcommand{\CG}{\mathcal{G}}
\newcommand{\CH}{\mathcal{H}}
\newcommand{\CP}{\mathcal{P}}
\newcommand{\CQ}{\mathcal{Q}}
\newcommand{\CS}{\mathcal{S}}
\newcommand{\CT}{\mathcal{T}}
\newcommand{\M}{\mathrm{M}}
\newcommand{\SC}{\mathscr{C}}
\DeclareMathOperator{\Aut}{Aut}
\DeclareMathOperator{\GL}{GL}
\DeclareMathOperator{\SL}{SL}
\DeclareMathOperator{\Gal}{Gal}
\DeclareMathOperator{\tr}{tr}
\newcommand{\dee}{\mathrm{d}}
\newcommand{\leg}[2]{\left(\frac{#1}{#2}\right)}
\newcommand{\ds}{\displaystyle}
\newtheorem{thm}{Theorem}[section]
\newtheorem{cor}[thm]{Corollary}
\newtheorem{lma}[thm]{Lemma}
\theoremstyle{remark}
\newtheorem{rmk}{Remark}[section]
\newcommand{\eq}[2]{ \begin{equation} \label{#1}\begin{split} #2 \end{split} \end{equation} }
\newcommand{\al}[1]{\begin{align} #1 \end{align} }
\newcommand{\als}[1]{\begin{align*} #1 \end{align*} }
\newcommand{\bs}\boldsymbol{}
\newcommand{\nn}{\nonumber \\}
\newcommand{\prob}{\mathbb{P}}
\DeclareMathOperator{\rad}{rad}
\DeclareMathOperator{\cond}{cond}
\newcommand{\fl}[1]{\left\lfloor#1\right\rfloor}
\renewcommand{\Re}{{\rm Re}}
\renewcommand{\mod}[1]{\,({\rm mod}\,#1)}
\renewcommand{\pmod}[1]{\,({\rm mod}\,#1)}
\definecolor{pink}{rgb}{1,.2,.6}
\definecolor{orange}{rgb}{0.7,0.3,0}
\definecolor{blue}{rgb}{.2,.6,.75}
\numberwithin{equation}{section}
\renewcommand{\labelenumi}{(\alph{enumi})}
\title{Sums of Euler products and statistics of elliptic curves}
\author{Chantal David}
\address[Chantal David]{
Department of Mathematics and Statistics\\
Concordia University\\
1455 de Maisonneuve West\\
Montr\'eal, Qu\'ebec\\
H3G 1M8\\
Canada
}
\email{cdavid@mathstat.concordia.ca}
\author{Dimitris Koukoulopoulos}
\address[Dimitris Koukoulopoulos]{D\'epartement de math\'ematiques et de statistique\\
Universit\'e de Montr\'eal\\
CP 6128 succ. Centre-Ville\\
Montr\'eal, QC H3C 3J7\\
Canada}
\email{{\tt koukoulo@dms.umontreal.ca}}
\author{Ethan Smith}
\address[Ethan Smith]{
Department of Mathematics\\
Liberty University\\
1971 University Blvd\\
MSC Box 710052\\
Lynchburg, VA  24502
}
\email{ecsmith13@liberty.edu}
\subjclass[2010]{11G07, 11N45}
\date{\today}                                           
\begin{document}

\begin{abstract}
We present several results related to statistics for elliptic curves over a finite field $\F_p$ as corollaries of a general theorem about averages of Euler products that we demonstrate. In this general framework, we can reprove known results such as the average Lang-Trotter conjecture, the average Koblitz conjecture, and the vertical Sato-Tate conjecture, even for very short intervals, not accessible by previous methods. We also compute statistics for new questions, such as the problem of amicable pairs and aliquot cycles, first introduced by Silverman and Stange. Our technique is rather flexible and should be easily applicable to a wide range of similar problems. The starting point of our results is a theorem of Gekeler which gives a reinterpretation of Deuring's theorem in terms of an Euler product involving random matrices, thus making a direct connection between the (conjectural) horizontal distributions and the vertical distributions. Our main technical result then shows that, under certain conditions, a weighted average of Euler products is asymptotic to the Euler product of the average factors.
\end{abstract}

\maketitle

\setcounter{tocdepth}{2}
\tableofcontents


\section{Introduction}\label{intro}

Given a fixed elliptic curve $E$ over $\Q$, let $a_p(E)$ denote its trace of Frobenius at the prime $p$.
In~\cite{LT:1976}, Lang and Trotter constructed a heuristic probability model to predict an asymptotic for
\eq{fixed trace problem}{
	\#\{p\le x : a_p(E)=t\},
}
where $t$ is a fixed integer and $E/\Q$ is a fixed elliptic curve without complex multiplication. Considerations based on the Sato-Tate Conjecture and the Chebotar\"ev Density Theorem led them to postulate a model of the form
\eq{LT pmf}{
f_\infty(t,p) \cdot f(t,E)
}
for the probability that $a_p(E)=t$. To make their model compatible with the Sato-Tate Conjecture, Lang and Trotter chose\footnote{We have slightly changed the Lang-Trotter notation and absorbed the factor $c_p = 1/(2 \sqrt{p})$ in $f_\infty(t,p)$ such that $f_\infty(t,p)$ now corresponds to the Sato-Tate measure $\frac{2}{\pi} \sqrt{1 - u^2}$ with the change of variable $u=t/(2 \sqrt{p})$, and then $\int_{\mathbb{R}} f_\infty(t,p) \,dt = 1$.}
\eq{discrete Sato-Tate}{
	f_\infty(t,p)  =
		\begin{cases}
			\ds \frac{1}{\pi \sqrt{p}}\sqrt{1-\left(\frac{t}{2\sqrt p}\right)^2}
				&\text{if }|t|<2\sqrt p,\\
			0&\text{otherwise}.\end{cases}
}
To make their model compatible with the Chebotar\"ev Density Theorem applied to every $M$-division field of $E$, they chose
\[
	f(t,E) = \lim_{\substack{\longleftarrow\\ M}}\frac{M\cdot |G_E(M)_t|}{|G_E(M)|},
\]
where $G_E(M)$ denotes the image of the map
\[
		\Gal(\overline\Q/\Q)\stackrel{\rho_E}{\longrightarrow}
			\prod_\ell\GL_2(\Z_\ell)\longrightarrow\GL_2(\Z/M\Z),
\]
and $G_E(M)_t$ denotes the trace $t$ elements of $G_E(M)$.
Serre showed that the image of $\rho_E$ is open in $\prod_\ell\GL_2(\Z_\ell)$, whence it follows that there exists a positive integer $M_E>1$ such that
\als{
f(t,E)
	&=\frac{M_E\cdot |G_E(M_E)_t|}{|G_E(M_E)|}\cdot
	\prod_{\ell\nmid M_E}\left(\lim_{r\rightarrow\infty}
		\frac{\ell^r\cdot |\GL_2(\Z/\ell^r\Z)_t|}{|\GL_2(\Z/\ell^r\Z)|}\right) \\
	&= \frac{M_E\cdot |G_E(M_E)_t|}{|G_E(M_E)|}\cdot    \prod_{\ell\nmid M_E}
	\frac{\ell \cdot|\GL_2(\Z/\ell\Z)_t|}{|\GL_2(\Z/\ell\Z)|} ,
}
since the ratios at each prime $\ell \nmid M_E$ are constant for all $r \ge 1$. We also remark that the infinite product over the primes $\ell\nmid M_E$ is absolutely convergent. Fixing $t$ and letting $p\rightarrow\infty$, we have that $f_\infty(t,p)\sim \frac{1}{\pi \sqrt{p}}$, and summing the probabilities leads one to an ``expected value" of
\begin{eqnarray} \label{LT-conjecture}
\sum_{p\le x}f_\infty(t,p) \cdot f(t,E) \sim C_{E,t}\sum_{p\le x}\frac{1}{2\sqrt p}
	\sim C_{E,t}\int_2^x\frac{\dee u}{2\sqrt u\log u}
\end{eqnarray}
for~\eqref{fixed trace problem}, where $C_{E,t}=\frac{2}{\pi} f(t,E)$.
This conjectural asymptotic for~\eqref{fixed trace problem} is known as the ``fixed trace" Lang-Trotter Conjecture.

Alternatively, one may ask for a ``vertical" analogue of the above problem, where one fixes the prime $p$ and allows the elliptic curve $E$ to vary over all isomorphism classes of elliptic curves over the finite field $\F_p$. Here it is natural to count the isomorphism class $E/\F_p$ with weight $1/|\Aut_p(E)|$, where $\Aut_p(E)$ denotes the $\F_p$-automorphism group of $E$ as a curve over $\F_p$. If we let $\CC_p$ be the set of isomorphism classes of elliptic curves over $\F_p$, then
\[
\sum_{E\in \CC_p} \frac{1}{|\Aut_p(E)|} = p,
\]
as was observed in \cite{Lenstra:1987}. We may thus define a probability measure on $\CC_p$ by setting
\[
\prob_{\CC_p}(A) = \frac{1}{p} \sum_{E\in A} \frac{1}{|\Aut_p(E)|}
\]
for all $A\subset \CC_p$. Often, given a property $Q$ depending only on the isomorphism class of elliptic curves over $\F_p$, we will write
\[
\prob_{\CC_p}(E\ \text{has property}\ Q)
\quad\mbox{instead of}\quad \prob_{\CC_p}(\{E\in \CC_p: E\ \text{has property}\ Q\}).
\]

The measure $\prob_{\CC_p}$ can be also interpreted as the probability measure on (isomorphism classes of) elliptic curves that is induced by the (uniform) counting measure on the set of nonsingular Weierstrass equations defined over $\F_p$.  In particular, if one were to select elliptic curves defined over $\F_p$ by uniformly choosing nonsingular Weierstrass equations at random, then one would be $3=6/2$ times more likely to select a curve with automorphism group of size $2$ than one would be to select a curve with automorphism group of size $6$.  We recall that the only possible sizes for the automorphism groups are $2$, $4$, and $6$.  Furthermore, all but a bounded number of elliptic curves over $\F_p$ have exactly $2$ automorphisms. Thus, the difference between our induced measure on $\CC_p$ and the uniform counting measure on $\CC_p$ is only $O(1/p)$.

The distribution of the elements of $\CC_p$ with a fixed trace was described by Deuring~\cite{Deu:1941} in terms of class numbers of imaginary quadratic orders, who showed that
\eq{Deuring}{
\prob_{\CC_p}(a_p(E)=t)
	=\begin{cases}
		\ds \frac{H(D(t,p))}{p}
			&\text{if }|t|<2\sqrt p,\\
			0&\text{otherwise} ,
	\end{cases}
}
where $D(t,p):=t^2-4p$ and $H(D)$ is the Kronecker class number of discriminant $D$, which we define as follows. Given a negative discriminant $D$, we define the associated Kronecker class number by
\[
H(D)=\sum_{\substack{d^2|D \\ D/d^2\equiv 0,1\mod 4}}
	\frac{h(D/d^2)}{w(D/d^2)} ,
\]
where $h(\Delta)$ denotes the (ordinary) class number of the unique imaginary quadratic order of discriminant $\Delta$, and $w(\Delta)$ denotes the cardinality of its unit group.

In~\cite{Gekeler:2003}, Gekeler gave a reinterpretation of the above ``Deuring probability mass function"~\eqref{Deuring} in terms of random matrix theory, thus making even stronger the connection between the ``vertical" fixed trace distribution and the ``horizontal" fixed trace Lang-Trotter Conjecture. We state below Theorem 5.5 of \cite{Gekeler:2003} in a slightly modified form. We present in Section \ref{matrix count section} a new proof of this result, relying on a more combinatorial approach. It also slightly strengthens Theorem 5.5 of \cite{Gekeler:2003}: as Gekeler showed, the limit defining $f_\ell(t,p)$ stabilizes for large enough $r$, and we improve on how large $r$ needs to be in the case when $\ell$ does not divide the unique fundamental discriminant dividing $D(t,p)$. (See Theorem \ref{Gekeler thm} for the precise statement concerning the stabilisation point.) Our new approach is not necessary for the claimed improvement, which would also follow from small modifications of Gekeler's proof, but we believe that it is interesting in its own right. The  improvement itself will be important in the applications of Theorem \ref{Gekeler}. The connections with random matrix theory will be discussed later on.


\begin{thm}[Gekeler]\label{Gekeler}
Let $p$ be a fixed prime number, and let $t$ be any integer. We have that
\[
\prob_{\CC_p}(a_p(E)=t) = f_\infty(t,p)\cdot\prod_\ell f_\ell(t,p),
\]
where $f_\infty(t,p)$ is defined by~\eqref{discrete Sato-Tate}, and for each prime $\ell$,
\[
f_\ell(t,p)= \lim_{r\rightarrow\infty}\frac{\ell^r\phi(\ell^r)\cdot\#\left\{\sigma\in M_2(\Z/\ell^r\Z) :
	\begin{array}{l}
	\tr(\sigma)\equiv t\pmod{\ell^r},\\
	\det(\sigma)\equiv p\pmod{\ell^r}
	\end{array}\right\}}
	{|\GL_2(\Z/\ell^r\Z)|} .
\]
\end{thm}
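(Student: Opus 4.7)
The plan is to combine Deuring's formula with an explicit computation of the local matrix counts $f_\ell(t,p)$ and match the result against a classical Euler-product expansion of the Kronecker class number $H(D)$. The starting point is the formula (\ref{Deuring}): for $|t|<2\sqrt p$ the left-hand side of the theorem equals $H(D)/p$ with $D=D(t,p)=t^2-4p<0$, and for $|t|\ge 2\sqrt p$ both sides vanish (on the right because $f_\infty(t,p)=0$, once the convergence of $\prod_\ell f_\ell(t,p)$ is verified). Since $p\cdot f_\infty(t,p)=\sqrt{|D|}/(2\pi)$ when $|t|<2\sqrt p$, the theorem reduces to the identity
\[
H(D)=\frac{\sqrt{|D|}}{2\pi}\prod_\ell f_\ell(t,p).
\]

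To evaluate $f_\ell(t,p)$ I would parametrize $\sigma=\mtr{a & b\\ c & d}\in M_2(\Z/\ell^r\Z)$ by its entries. The constraints $\tr(\sigma)=t$ and $\det(\sigma)=p$ force $d=t-a$ and $bc\equiv a(t-a)-p\pmod{\ell^r}$, so
\[
N_r(t,p,\ell)=\sum_{a\in\Z/\ell^r\Z}T\!\bigl(a(t-a)-p,\,\ell^r\bigr),
\]
where $T(m,\ell^r)$ is the number of pairs $(b,c)\in(\Z/\ell^r\Z)^2$ with $bc\equiv m$. The count $T(m,\ell^r)$ is a standard combinatorial quantity depending only on $v_\ell(m)$, and completing the square gives $4(a(t-a)-p)=-(2a-t)^2+D$. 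After the change of variables $y=2a-t$ (essentially a bijection for odd $\ell$, with an extra factor at $\ell=2$), the sum becomes a sum over $y$ of $T((y^2-D)/4,\ell^r)$, which depends only on the local structure of $D$ at $\ell$. This makes the stabilisation of the limit in $r$ transparent and yields an explicit closed form for $f_\ell(t,p)$, with case distinctions keyed to $v_\ell(D)$ and the splitting behaviour of the fundamental part of $D$ at $\ell$.

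The final step is to write $H(D)$ as an Euler product. Setting $D=d_0 f^2$ with $d_0$ the (negative) fundamental discriminant, I would expand
\[
H(D)=\sum_{g\mid f}\frac{h(d_0g^2)}{w(d_0g^2)}
\]
by combining the conductor-correction formula $h(d_0g^2)=[w(d_0g^2)/w(d_0)]\cdot g\cdot h(d_0)\prod_{\ell\mid g}(1-\leg{d_0}{\ell}/\ell)$ with the Dirichlet class-number formula $h(d_0)=w(d_0)\sqrt{|d_0|}L(1,\chi_{d_0})/(2\pi)$. Factoring the resulting multiplicative sum by primes identifies a local factor $\omega_\ell(D)$ at each $\ell$, and the identity $H(D)=(\sqrt{|D|}/(2\pi))\prod_\ell \omega_\ell(D)$ emerges. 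A prime-by-prime comparison $\omega_\ell(D)=f_\ell(t,p)$ then completes the proof.

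The main obstacle will be the bookkeeping in this final matching step: at primes $\ell$ dividing the conductor $f$ one must track how the geometric sum from the expansion of $H(D)$ telescopes against the matrix count, while at primes dividing $d_0$ one has to compare a non-trivial Euler factor of $L(1,\chi_{d_0})$ against the $y$-sum above. The prime $\ell=2$ requires separate treatment both because of the factor of $4$ obstructing completion of the square and because of the idiosyncratic $2$-adic structure of quadratic orders; consequently that case will likely consume the bulk of the technical work.
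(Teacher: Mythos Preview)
Your proposal is correct and your computation of the matrix counts (parametrize $\sigma$ by entries, reduce to $bc\equiv a(t-a)-p$, stratify by $v_\ell(b)$, complete the square $y=2a-t$) is exactly what the paper does in the proof of Theorem~\ref{Gekeler thm}(a). The difference lies on the class-number side. You plan to expand $H(D)$ via the conductor formula $h(d_0g^2)=[w(d_0g^2)/w(d_0)]\,g\,h(d_0)\prod_{\ell\mid g}(1-\leg{d_0}{\ell}/\ell)$ together with Dirichlet's formula for $h(d_0)$, and then match the resulting local factors against the explicit $f_\ell(t,p)$ prime by prime; this is essentially Gekeler's original route and, as you anticipate, forces a case analysis at primes dividing the conductor, at primes dividing $d_0$, and at $\ell=2$. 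The paper instead proves a Kronecker class-number formula (Theorem~\ref{CNF})
\[
H(D)=\frac{\sqrt{|D|}}{2\pi}\prod_\ell\Bigl(1+\tfrac{1}{\ell}\Bigr)^{-1}\sum_{j\ge0}\frac{N_D(\ell^j)}{\ell^j}
\]
directly via representation numbers of binary quadratic forms and the hyperbola method, where $N_D(m)$ counts $x\bmod 2m$ with $x^2\equiv D\pmod{4m}$. The point is that the matrix-count computation \emph{already} produces exactly the quantity $(1+1/\ell)^{-1}\sum_j N_D(\ell^j)/\ell^j$ for $f_\ell(t,p)$ (see Theorem~\ref{Gekeler thm}(a)), so the matching becomes tautological and the prime-by-prime bookkeeping you flag as the main obstacle disappears entirely, including at $\ell=2$. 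Your approach works but costs more effort; the paper's organizing principle is to phrase both sides in terms of the same local invariant $N_D$ from the outset.
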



\begin{rmk}\label{p-adic} As we mentioned above,
\[
\int_{\R} f_\infty(t,p) \dee t =1.
\]
Moreover, as observed by \cite[Remark 3.1]{Gekeler:2003}, it is easy to see that
\[
\int_{\Z_\ell} f_\ell(t,p) \dee \mu_\ell(t) = 1,
\]
where $\mu_\ell$ denotes the Haar measure on the $\ell$-adic integers $\Z_\ell$, that is to say, the quantities $f_\ell(t,p)$ can be interpreted as probability density functions for $t$ varying over $\Z_\ell$ or over $\R$.
\end{rmk}


\begin{rmk}
As we will see later on, we have that
\[
f_\ell(t,p) = 1 +\frac{\leg{t^2-4p}{\ell}}{\ell}+ O\left(\frac{1}{\ell^2}\right)
\]
for all $\ell\nmid t^2-4p$. In particular, the infinite product $\prod_\ell f_\ell(t,p)$ converges conditionally by the Prime Number Theorem for arithmetic progressions, but it does not converge absolutely. Similar remarks apply to Theorems \ref{matrix interpretation for M_p(G)}, and~\ref{matrix interpretation for M_p(t,n)} below. The fact that the convergence is so delicate will create some technical problems in the proof of Theorems \ref{LT}--\ref{MEG} when we average the `singular series' $ f_\infty(t,p)\cdot\prod_\ell f_\ell(t,p)$. This is in contrast with the situation in \cite{Gal:1976,Gal:1981,Kow:2011}. There the authors study averages of singular series arising from the Hardy-Littlewood $k$-tuple conjectures, and such singular series are given by absolutely convergent Euler products.
\end{rmk}


The main purpose of this paper is to show how Gekeler's result can lead to new proofs of vertical distribution results in a way that is both more unified and more conceptual. Some of our results have already been in the literature with other techniques, some improve previous results in the literature and some are new. We indicate that clearly when stating our results in Section \ref{subsection-SR}. Indeed, Deuring's formula \eqref{Deuring} has been in the heart of the proof of many results about the statistics of elliptic curves, such as results about the average probability that an elliptic curve lies in a given isogeny class (average Lang-Trotter conjecture), about the probability that $a_p(E)$ lies in a given interval (vertical Sato-Tate conjecture), and about the average probability that $\#E(\F_p)$ is a prime number (average Koblitz's conjecture). The proof of these results typically involve some rather involved local computations. After these calculations have been performed, one finds that the quantities in question are asymptotic to $C\cdot M$, where $M$ is some nice function varying smoothly in the various parameters involved and $C$ is a certain infinite Euler product. Then more local calculations reveal that $C$ has a natural probabilistic interpretation in terms of random matrices. In this paper, we will show how to use Gekeler's reinterpretation of Deuring's formula to arrive directly to a result of the form $C\cdot M$, where $C$ is given already in terms of local probabilities. What is more, the local computations are now completely straightforward and intuitive. All the results we will state below are easy corollaries of a rather general result, Theorem \ref{singular series average}.


\subsection{Statements of the results} \label{subsection-SR}

The first result involves averaging Gekeler's theorem. The study of this average originated in the work of Fouvry and Murty \cite{FM:1996} and of David and Pappalardi \cite{DP:1999} in their work on the average Lang-Trotter conjecture.

\begin{thm}\label{LT} Let $t\in\Z$ and $A>0$. For $x\ge2$, we have that
\[
\sum_{p\le x} \prob_{\CC_p}(a_p(E)=t)
	= C_{\text{LT}}(t)  \cdot \int_2^x\frac{dt}{2\sqrt{t}\log t} 	
	+   O_{t,A}\left(\frac{\sqrt{x}}{(\log x)^A}\right) ,
\]
where
\[
C_{\text{LT}}(t) := \frac{2}{\pi}
		\prod_{\ell}     \frac{\ell \cdot \# \GL_2(\Z/\ell\Z)_t}{\#\GL_2(\Z/\ell \Z) }  .
\]
\end{thm}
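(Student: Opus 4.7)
The starting point is Theorem \ref{Gekeler}, which rewrites the target sum as
\[
\sum_{p\le x} \prob_{\CC_p}(a_p(E)=t) = \sum_{p\le x} f_\infty(t,p) \prod_\ell f_\ell(t,p).
\]
Viewing $f_\infty(t,p)$ as a smooth archimedean weight and $\prod_\ell f_\ell(t,p)$ as an Euler product whose factors depend on the summation variable $p$, the plan is to apply Theorem \ref{singular series average} to swap the sum over $p$ with the Euler product. The conclusion should be that the above is asymptotic to $\big(\sum_{p\le x} f_\infty(t,p)\big) \cdot \prod_\ell \overline{f_\ell}(t)$, where $\overline{f_\ell}(t)$ is the average value of $f_\ell(t,p)$ as $p$ ranges over primes. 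The task then splits into (i) evaluating the archimedean sum, (ii) computing the local averages, and (iii) verifying the hypotheses of Theorem \ref{singular series average} with the claimed error.

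For (i), inserting the formula \eqref{discrete Sato-Tate} and Taylor-expanding $\sqrt{1-t^2/(4p)} = 1 + O_t(1/p)$ for $p>t^2$ gives
\[
\sum_{p\le x} f_\infty(t,p) = \frac{1}{\pi}\sum_{t^2/4<p\le x} \frac{1}{\sqrt p} + O_t(1) = \frac{2}{\pi}\int_2^x \frac{du}{2\sqrt u \log u} + O_{t,A}\!\left(\frac{\sqrt x}{(\log x)^A}\right)
\]
by the prime number theorem and partial summation. For (ii), fix $\ell$ and choose $r$ large enough that the limit defining $f_\ell(t,p)$ has stabilised (Theorem \ref{Gekeler thm}). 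Grouping primes by residue class modulo $\ell^r$, the determinant condition $\det(\sigma)\equiv p\pmod{\ell^r}$ restricts $p$ to $(\Z/\ell^r\Z)^\times$, and by Siegel--Walfisz each admissible class contains $\sim \pi(x)/\phi(\ell^r)$ primes; the factors of $\phi(\ell^r)$ cancel and summing over all admissible determinants collapses the matrix count to $\#\GL_2(\Z/\ell^r\Z)_t$. This yields
\[
\overline{f_\ell}(t) = \frac{\ell^r \cdot \#\GL_2(\Z/\ell^r\Z)_t}{\#\GL_2(\Z/\ell^r\Z)} = \frac{\ell \cdot \#\GL_2(\Z/\ell\Z)_t}{\#\GL_2(\Z/\ell\Z)}
\]
by the stabilisation of the local factor. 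Combining with (i), the main term works out to $\frac{\pi}{2}C_{\text{LT}}(t)\cdot \frac{2}{\pi}\int_2^x \frac{du}{2\sqrt u \log u} = C_{\text{LT}}(t)\int_2^x\frac{du}{2\sqrt u\log u}$, exactly matching the claim.

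The main obstacle is (iii): the Euler product $\prod_\ell f_\ell(t,p)$ is only conditionally convergent, since the second remark above gives $f_\ell(t,p) = 1 + \leg{t^2-4p}{\ell}/\ell + O(1/\ell^2)$. Consequently the clean ``swap'' above cannot simply be executed factor-by-factor; one must truncate the product at some slowly growing level $L = L(x)$, estimate the tail using averaged bounds for character sums (which give cancellation in the conditionally convergent range because of the quadratic symbol in $p$), and apply Siegel--Walfisz uniformly over all moduli $\ell^r\le L^r$. Balancing $L$ and $r$ against the Siegel--Walfisz range produces the error $O_{t,A}(\sqrt x/(\log x)^A)$. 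This delicate balancing is precisely the content that Theorem \ref{singular series average} is designed to isolate and prove once and for all, so the remaining work for Theorem \ref{LT} is to verify that the local factors $f_\ell(t,p)$ arising from Gekeler's formula satisfy the structural hypotheses (size bounds, factorisation modulo $\ell^r$, and first-order behaviour in a Legendre symbol) required as input to that general theorem.
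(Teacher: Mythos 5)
Your proposal follows essentially the same route as the paper: rewrite the sum via Theorem \ref{Gekeler}, absorb $f_\infty(t,p)f_p(t,p)$ into a weight $w_p\sim 1/(\pi\sqrt p)$, and feed the remaining Euler product into Theorem \ref{singular series average}; your main-term computation (the archimedean integral and the local averages $1+\Delta_\ell=\ell\cdot\#\GL_2(\Z/\ell\Z)_t/\#\GL_2(\Z/\ell\Z)$) matches the paper's. Be aware, though, that the verification you defer in step (iii) \emph{is} the actual content of the paper's proof: one takes $\CG(\ell)=(\Z/\ell\Z)^*$, $F(a)=t^2-4a$, $D_1(a)=a^2(t^2-4a)$, checks conditions (1), (2), (3'), (4'), (5') directly from Theorem \ref{Gekeler thm}, and then bounds the three quantities in the conclusion: $W$ by the Prime Number Theorem, $E$ by Bombieri--Vinogradov (not Siegel--Walfisz, whose range does not cover moduli up to $Q=\exp\{(\log\log X)^2\}$), and the exceptional-set quantity $M$ --- which your outline omits entirely --- by counting primes $p\le x$ with $(t^2-4p)/n$ a perfect square, i.e.\ lattice points on $|n|m^2+t^2=4p$, giving $M\ll 1$. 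Also note that the delicate truncation of the conditionally convergent product is already performed inside the proof of Theorem \ref{singular series average} via zero-density estimates (Lemma \ref{lemmashortproduct}), so you need not redo it; the price you pay for the exceptional conductors is exactly the term $MX^\epsilon$, which is why the bound on $M$ cannot be skipped.
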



This result gives evidence for the Lang-Trotter conjecture as stated in \eqref{LT-conjecture}. In particular, note the similarity between the constants $C_{E,t}$ and $C_{\text{LT}}(t)$. It was shown by Jones \cite{Jon:2009} that the factor $M_E$ in $C_{E,t}$ can be controlled on average and that, for any fixed $t \in \Z$, the average of the constants $C_{E,t}$ of \eqref{LT-conjecture}
over all elliptic curves over $\Q$ is indeed the constant $C_{\text{LT}}(t)$; his results also apply to the average Koblitz constant $C_{\text{twin}}$ of Theorem \ref{Koblitz}.
\begin{rmk}
It is not immediately obvious that the Euler product defining the constant $C_{\text{LT}}(t)$ converges. One could, of course, deduce this easily by calculating explicitly the factors for each prime $\ell$. This is not necessary however, since the proof of Theorem \ref{LT} implies that the factor for the prime $\ell$ satisfies the estimate $1+O(1/\ell^{3/2})$, unless $\ell$ is one of the finitely many prime divisors of some non-zero integer $B$. The size of $B$ is controlled in terms of $t$, though the exact dependence is not needed here. Similar remarks apply to the constants appearing in all subsequent theorems of this section.
\end{rmk}


Next, we show a uniform version of the vertical Sato-Tate conjecture for the distribution of the normalized traces $a_p(E)/2 \sqrt{p}$ in an interval $[\alpha, \beta] \subset[-1,1]$. For fixed $\alpha$ and $\beta$, this theorem is due to Birch \cite{Birch}, and it has been proven for shorter intervals (and thin families of curves) by Banks and Shparlinski \cite[Lemma 9]{BS:2009} and by Baier and Zhao \cite[Theorem 3]{BZ:2009}. Theorem \ref{ST} below represents an improvement over both these results, demonstrating that $a_p(E)$ is distributed according to the Sato-Tate measure in all intervals $I\subset[-1,1]$ of length $\ge p^{-1/2 +\epsilon}$.


\begin{thm}\label{ST} Fix $\epsilon>0$ and $A\ge1$. For prime $p\ge2$ and $-1 \leq \alpha \leq \beta \leq 1$ with $\beta-\alpha\ge p^{-1/2+\epsilon}$, we have that
\[
\prob_{\CC_p}\left(\alpha \le \frac{a_p(E)}{2\sqrt{p}}\le \beta\right)
	=\left(1+O_{A,\epsilon}\left(\frac{1}{(\log p)^A}\right)\right)
		\frac{2}{\pi} \int_\alpha^\beta \sqrt{1-u^2} \, \dee u .
\]
\end{thm}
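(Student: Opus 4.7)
The plan is to reduce the theorem, via Gekeler's decomposition (Theorem \ref{Gekeler}), to a weighted average of the singular series over an integer interval of $t$'s. Writing $I_p:=\Z\cap[2\sqrt p\,\alpha,\,2\sqrt p\,\beta]$, we have
\[
\prob_{\CC_p}\!\left(\alpha\le \tfrac{a_p(E)}{2\sqrt p}\le\beta\right)
=\sum_{t\in I_p} f_\infty(t,p)\prod_\ell f_\ell(t,p),
\]
and the hypothesis $\beta-\alpha\ge p^{-1/2+\epsilon}$ guarantees $|I_p|\ge 2p^\epsilon$. The problem is thus to evaluate the weighted average of $\prod_\ell f_\ell(t,p)$ over this set, with weight $f_\infty(t,p)$.

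For this I would invoke the general averaging result Theorem \ref{singular series average}, which in the situations of interest asserts that a weighted sum of Euler products is asymptotic to the product of the averages of the local factors, up to an error of size $(\log p)^{-A}$ relative to the main term. By Remark \ref{p-adic}, each density $f_\ell(\cdot,p)$ has $\mu_\ell$-mean $1$, and by Theorem \ref{Gekeler} the defining limit stabilizes at a finite $r$; hence the average of $f_\ell(t,p)$ over any run of $\ell^r$ consecutive integers equals $1$ exactly. The product of local averages collapses to $1$, leaving the main term
\[
\sum_{t\in I_p}f_\infty(t,p)=\sum_{t\in I_p}\frac{1}{\pi\sqrt p}\sqrt{1-\left(\tfrac{t}{2\sqrt p}\right)^2}.
\]
Because $f_\infty(\cdot,p)$ is smooth away from $\pm 2\sqrt p$ and $|I_p|\ge 2p^\epsilon$, summation by parts (equivalently, a Riemann-sum comparison under the substitution $u=t/(2\sqrt p)$) yields $\frac{2}{\pi}\int_\alpha^\beta\sqrt{1-u^2}\,\dee u$ with a relative error of $O(1/(\sqrt p\,(\beta-\alpha)))=O(p^{-\epsilon})$, absorbed into the stated $(\log p)^{-A}$ bound. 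Short neighborhoods of $\pm 1$, where the density has a vertical tangent, can be treated separately since the Sato-Tate mass of a neighborhood of $\pm 1$ of length $p^{-1/2}$ is already $O(p^{-3/4})$.

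The main obstacle is making Theorem \ref{singular series average} deliver a sufficient saving uniformly for intervals as short as $|I_p|\asymp p^\epsilon$. The singular series is only conditionally convergent, as emphasized in the Remark following Theorem \ref{Gekeler}, so the required cancellation cannot be obtained from absolute tail estimates; rather it must be extracted from the oscillation of $\leg{t^2-4p}{\ell}/\ell$ across the interval, where the Prime Number Theorem in arithmetic progressions (or a Siegel--Walfisz type input) must be invoked. Once this averaging step is carried out, combining it with the elementary Riemann-sum comparison produces the claimed uniform Sato-Tate asymptotic.
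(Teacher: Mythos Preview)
Your outline is essentially the paper's proof: one applies Theorem~\ref{singular series average} with $\CA=I_p$, $\CG(\ell)=\Z/\ell\Z$, $w_t=f_\infty(t,p)f_p(t,p)$, $\delta_\ell(t)={\bf 1}_{\ell\neq p}(f_\ell(t,p)-1)$, $F(t)=D_1(t)=t^2-4p$, and the local averages $\Delta_\ell$ vanish exactly as you say, so $P=1$ and the main term is the Riemann sum for $\tfrac{2}{\pi}\int_\alpha^\beta\sqrt{1-u^2}\,\dee u$. The one place where your sketch is too optimistic is the endpoint analysis: when $[\alpha,\beta]$ lies \emph{entirely} near $\pm1$ you cannot simply discard a neighbourhood of the endpoint, and the paper instead works with $\eta=1-\alpha$, proves $\int_\alpha^\beta\sqrt{1-u^2}\,\dee u\asymp\sqrt\eta\,(\beta-\alpha)$, and shows via partial summation that the $W$ and $E$ estimates (and hence condition~\eqref{bt for A}) hold with relative error $O(q/p^\epsilon)$ uniformly in $\eta$---this, rather than any Siegel--Walfisz input (here $\CA$ consists of integers, not primes, so equidistribution mod $q$ is trivial), is where the genuine work lies.
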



Our next theorem has three parts. The first one concerns the probability that, given $p$, an elliptic curve over $\F_p$ has a prime number of points. This question was first studied by Galbraith and McKee \cite{GM:2000}, and Conjecture 1 of their paper amounts to saying that the error term of \eqref{Koblitz-1} can be controlled. This is true under standard conjectures on the distributions of the primes in short intervals, but not unconditionally. This is similar to the situation in \cite{DS-MEN, DS-MEG} for elliptic curves over $\F_p$ with a fixed number of points, or a fixed group. If we average over $p$, then it is possible to show that their conjecture holds. This is the result \eqref{Koblitz-2} below, and it is similar to the results in \cite{CDKS2}, again for elliptic curves over $\F_p$ with a fixed number of points, or a fixed group. See also the remarks before Theorem \ref{MEN} and \ref{MEG}. The third statement of Theorem \ref{Koblitz} is a new proof of a result that arose in the work of Balog, Cojocaru and David \cite{BCD:2011} on the average Koblitz conjecture. Here and in the statements of some other results, we shall use the notations
\eq{def-E}{
E(y,h;q) := \max_{(a,q)=1} \left| \sum_{\substack{y<p\le y+h \\ p\equiv a\mod{q}}}\log p
						- \frac{h}{\phi(q)}\right|
}
and
\eq{def-R}{
R(x,h;m):=  \frac{\phi(m)}{h\sqrt{x}} \sum_{q\le \exp\{(\log\log2x)^2\}}
				 \int_{x^-}^{x^+}
				 	E(y,h;qm) \dee y ,
}
where
\begin{equation} \label{x-plus-minus}
x^\pm:=x\pm2\sqrt{x}+1 .
\end{equation}


\begin{thm}\label{Koblitz} Fix $\epsilon>0$ and $A\ge1$. For $p$ prime and $h\in[p^{\epsilon}, \sqrt{p}/(\log p)^{2A+1}]$, we have that
\eq{Koblitz-1}{
\prob_{\CC_p}( |E(\F_p)| \; \text{is prime} )  = \frac{C_{GM}(p)}{\log p}
		\left(1+ O\left(  \frac{1}{(\log p)^A}
			+  (\log\log p)^{O(1)} R(p,h;1)^{1/3} \right) \right),
}
where
\[
C_{GM}(p) :=  \prod_{\ell\neq p} \left(1-\frac{1}{\ell}\right)^{-1} \cdot \frac{\#\left\{\sigma\in\GL_2(\Z/\ell\Z) :
		\begin{array}{l} \det(\sigma) + 1 - \tr(\sigma)
	 \not\equiv 0 \mod{\ell} \\ \det(\sigma)\equiv p\mod{\ell}
	 \end{array} \right\}}
	 	{ \#\{\sigma\in \GL_2(\Z/\ell \Z): \det(\sigma)\equiv p\mod{\ell}\} }  ,
\]
and the implied constants depend at most on $\epsilon$ and $A$. Moreover,
\eq{Koblitz-2}{
\sum_{p\le x} \left|
		\prob_{\CC_p}( |E(\F_p)| \; \text{is prime} )  - \frac{C_{GM}(p)}{\log p}   \right|
		\ll_A \frac{x}{(\log x)^A}
}
and
\eq{Koblitz-3}{
\sum_{p\le x} \prob_{\CC_p}( |E(\F_p)| \; \text{is prime} )
	= C_{\text{twin}} \int_2^x\frac{\dee u}{\log^2u}
			+   O_A \left(\frac{x}{(\log x)^A}  \right) ,
}
where
\[
C_{\text{twin}} :=   \prod_{\ell}    \left(1-\frac{1}{\ell}\right)^{-1} \cdot
	\frac{\#\{\sigma\in\GL_2(\Z/\ell\Z) :\det(\sigma) + 1 - \tr(\sigma)
	 \not\equiv 0 \mod{\ell} \}}{ |\GL_2(\Z/\ell \Z)| }  .
\]
\end{thm}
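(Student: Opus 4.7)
The starting point is Gekeler's theorem, which, after substituting $q = p+1-t$, converts the quantity in \eqref{Koblitz-1} into a weighted sum over primes,
\[
\prob_{\CC_p}(|E(\F_p)|\text{ is prime}) = \sum_{\substack{q \text{ prime}\\ x^- < q < x^+}} f_\infty(p+1-q,\,p) \prod_\ell f_\ell(p+1-q,\,p).
\]
I view the right-hand side as a weighted average of the conditionally convergent Euler product $\prod_\ell f_\ell(p+1-q,p)$, whose value is controlled, under the hypotheses of Theorem \ref{singular series average}, by the product of the local averages of its factors. The plan is therefore to feed the weight $w(q) = f_\infty(p+1-q,p)\mathbf{1}_{q\text{ prime}}$ into Theorem \ref{singular series average} and to identify the resulting local and global integrals.

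For \eqref{Koblitz-1}, I would partition the Hasse interval into subintervals of length $\asymp h$ on which $f_\infty$ is essentially constant, and on each such piece apply Theorem \ref{singular series average}. The $\ell$-adic local averages take the shape
\[
\frac{1}{\phi(\ell^r)} \sum_{\substack{a \bmod \ell^r \\ (a,\ell)=1}} f_\ell(p+1-a,p),
\]
which, upon unpacking the definition of $f_\ell$ in Theorem \ref{Gekeler} and matching the coprimality $(q,\ell)=1$ with the condition $\det(\sigma)+1-\tr(\sigma)\not\equiv 0\pmod\ell$, collapse into the Euler factors defining $C_{GM}(p)$; the quantitative defect in equidistribution of primes in residues modulo $\ell^r$ is exactly what is packaged into $R(p,h;1)$, and I expect the exponent $1/3$ in the error to emerge from a H\"older-type balancing inside Theorem \ref{singular series average} between the truncation length of the Euler product and the arithmetic remainder. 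Summing over subintervals, the integral of the Sato--Tate density together with the prime number theorem in short intervals produces the prefactor $1/\log p$. Statement \eqref{Koblitz-2} then follows by summing \eqref{Koblitz-1} over $p \le x$ and invoking the Bombieri--Vinogradov theorem, after an optimal choice of $h$, to bound $\sum_{p\le x} R(p,h;1)$. For \eqref{Koblitz-3}, I would combine \eqref{Koblitz-2} with the computation of $\sum_{p \le x} C_{GM}(p)/\log p \sim C_{\text{twin}} \int_2^x \dee u/\log^2 u$, which reduces via PNT in arithmetic progressions to averaging each $\ell$-th factor of $C_{GM}(p)$ over residues of $p \bmod \ell$; this averaging produces precisely the $\ell$-th factor of $C_{\text{twin}}$.

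The main obstacle is the conditional convergence of $\prod_\ell f_\ell$, which forbids naive tail truncation; the heavy lifting here is done by Theorem \ref{singular series average}, whose role is to exploit subtle cancellation in the tail while simultaneously averaging over the prescribed prime constraint. A secondary but unavoidable difficulty is the lack of a pointwise PNT in short intervals of length $\sqrt p$, which is why $R(p,h;1)$ must enter \eqref{Koblitz-1} and can only be smoothed out on average via Bombieri--Vinogradov in \eqref{Koblitz-2} and \eqref{Koblitz-3}.
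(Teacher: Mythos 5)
Your treatment of \eqref{Koblitz-1} is essentially the paper's: one applies Theorem \ref{singular series average} with $d=k=1$ to $\CA=\{q\ \text{prime}: p^-<q<p^+\}$, weights $w_q=f_\infty(p+1-q,p)f_p(\cdot)f_q(\cdot)$ and $\delta_\ell(q)=f_\ell(p+1-q,p)-1$, and the local averages $\Delta_\ell$ collapse exactly to the Euler factors of $C_{GM}(p)$ as you describe. Two small corrections of emphasis. First, the paper does not apply the main theorem piecewise on subintervals of length $h$; it applies it once to the whole Hasse interval, and the subdivision into $h$-length pieces lives entirely inside the estimation of $W$ and $E(\CA;q)$ (Lemma \ref{W2}) --- this matters because the weighted prime count on an individual piece of length $h=p^{\epsilon}$ is not computable pointwise, only the aggregate with an $R(p,h;1)$-type error. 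Second, the exponent $1/3$ is not a free Hölder balance: it is $1/(m+1)$ with $m=\deg F=2$ for $F(a)=(p+1-a)^2-4p$, fixed by the polynomial controlling the exceptional sets in condition (4'). For \eqref{Koblitz-3} you take the route the paper mentions but declines (deduce it from \eqref{Koblitz-2} plus an average of $C_{GM}(p)/\log p$); the paper instead runs a second, two-dimensional application of Theorem \ref{singular series average} over pairs $(p,q)$ with $d=2$. Your route is viable --- the $\ell$-factor of $C_{GM}(p)$ is $1+O(1/\ell^2)$ away from $\ell\mid p-1$, so the singular series average is absolutely convergent and PNT in progressions suffices there --- but you should say this, since a priori $C_{GM}(p)$ looks like another conditionally convergent product.

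The genuine gap is in \eqref{Koblitz-2}: the classical Bombieri--Vinogradov theorem does not bound $\sum_{p\le x}R(p,h;1)$. By definition \eqref{def-R}, $R(p,h;1)$ involves $E(y,h;q)$, i.e.\ the error in counting primes in the \emph{short} interval $(y,y+h]$ in progressions mod $q$, with $h$ as small as $x^{1/3}$; Bombieri--Vinogradov controls the distribution of primes up to $x$ in progressions, not in windows of length $y^{1/3}$. What is actually needed is the estimate $\int_x^{2x}\sum_{q\le Q}E(y,h;q)\,\dee y\ll xh/(\log x)^A$ for $Q^2\le h/x^{1/6+\epsilon}$, which is Lemma \ref{bv-short}, a theorem of the second author \cite{Kou} on primes in short arithmetic progressions proved by different (zero-density) methods. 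With $h=x^{1/3}$ this yields $\sum_{p\le x}R(p,h;1)\ll x/(\log x)^{A}$ after interchanging the sum over $p$ with the integral over $y$, and then \eqref{Koblitz-2} follows from \eqref{Koblitz-1} by Hölder. Without this input your deduction of \eqref{Koblitz-2} (and hence of \eqref{Koblitz-3} along your route) does not close.
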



Next, we study the average probability that a curve $\E/\F_p$ has precisely $N$ points. Note here we must have that $|N+1-p|<2\sqrt{p}$ by Hasse's bound or, equivalently, that $N^-<p<N^+,$ where $N^-, N^+$ are defined as in \eqref{x-plus-minus}.The study of this question was initiated by the first and the third authors in \cite{DS-MEN,DS-MEN-corr} and it was continued by the three authors of the paper and Chandee in \cite{CDKS2}. The main term in Theorem \ref{MEN} below is the expected one, but it is not possible to control the error term because we do not presently know how many primes are contained in an interval as short as $(N^-,N^+)$. For the same reason, the results of \cite{DS-MEN,DS-MEN-corr} are conditional on conjectures for primes in short intervals, and the unconditional results of \cite{CDKS2}  hold only for ``most $N$". The same paper also contains an appendix written by Martin and the first and third authors, where some relevant computations involving random matrices are performed. The sum over primes $p$ runs only over the primes $p \in (N^-, N^+)$ by the Hasse bound.


\begin{thm}\label{MEN}
Fix $\epsilon>0$ and $A\ge1$. If $N\ge2$ and $h\in[N^{\epsilon}, \sqrt{N}/(\log N)^{2A+1}]$, then
\[
\sum_p \prob_{\CC_p}( |E(\F_p)| = N )   =  \frac{C(N)}{\log N}
			\left(1  +   O\left(\frac{1}{(\log N)^A}
				+ (\log\log N)^{O(1)} R(N,h;1)^{1/3}\right) \right),
\]
where  the implied constants depend at most on $\epsilon$ and $A$, and
\[
C(N) :=  \prod_{\ell}
	\lim_{r\to\infty} \frac{\ell^r\cdot \#\left\{\sigma\in\GL_2(\Z/\ell^r\Z) :
	\begin{array}{l}
	\tr(\sigma)\equiv \det(\sigma)+1-N \mod{\ell^r},\\
	\end{array}\right\}}
	{\#\GL_2(\Z/\ell^r\Z)} .
\]
Furthermore,
\[
\sum_{N \leq x} \left|
	\sum_p \prob_{\CC_p}( |E(\F_p)| = N ) -
		\frac{C(N)}{\log N}\right| \ll_A \frac{x}{(\log x)^A}.
\]
\end{thm}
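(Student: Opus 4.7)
The plan is to apply Gekeler's theorem (Theorem \ref{Gekeler}) and then invoke the master averaging result, Theorem \ref{singular series average}. Setting $t_p := p+1-N$, the event $|E(\F_p)| = N$ is equivalent to $a_p(E) = t_p$, which by Hasse's bound forces $p \in (N^-, N^+)$. Theorem \ref{Gekeler} converts the left-hand side into
\[
\sum_p \prob_{\CC_p}(|E(\F_p)|=N) = \sum_{N^- < p < N^+} f_\infty(t_p, p) \prod_\ell f_\ell(t_p, p).
\]
The crucial structural observation is that $f_\ell(t_p, p)$ depends on $(t_p, p)$ only through residues modulo $\ell^r$, and under $t_p = p+1-N$ the defining constraint on $\sigma$ becomes $\tr(\sigma) \equiv \det(\sigma) + 1 - N \pmod{\ell^r}$ together with $\det(\sigma) \equiv p \pmod{\ell^r}$. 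The first piece is $N$-dependent but $p$-independent, while the second depends on $p$ only through its residue class modulo $\ell^r$; this is exactly the setting for which Theorem \ref{singular series average} applies.

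I would then apply that theorem with weight $w(p) := f_\infty(t_p, p)$ (supported on the Hasse interval). Since $p \asymp N$ throughout, $f_\infty(t_p, p)$ may be smoothly replaced by $\frac{1}{\pi\sqrt N}\sqrt{1 - t_p^2/(4N)}$, up to multiplicative error $1 + O(1/\sqrt N)$. The theorem then replaces each local factor $f_\ell(t_p, p)$ by its average over $p$ running through invertible residues modulo $\ell^r$, weighted by the density of primes in those classes; by Dirichlet's theorem this density is uniform, with error encoded by $R(N,h;\ell^r)$-type quantities. The averaged local factor becomes
\[
\frac{1}{\phi(\ell^r)}\sum_{\substack{a \pmod{\ell^r} \\ (a,\ell)=1}} f_\ell(a+1-N,\, a),
\]
which, upon inserting the limit definition of $f_\ell$ from Theorem \ref{Gekeler} and summing over $a$, collapses precisely to the $\ell$-th factor of $C(N)$. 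The surviving weighted sum $\sum_{N^-<p<N^+} f_\infty(t_p, p)$ is then handled by partial summation: the change of variable $u = (p+1-N)/(2\sqrt p)$ combined with the Prime Number Theorem yields a main term of size $\sim 1/\log N$ with relative error $O(1/(\log N)^A)$.

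The quantitative error in the Euler-product averaging proceeds by truncating at a parameter $Y$: the tail $\prod_{\ell>Y}$ is controlled using the pointwise expansion $f_\ell(t,p) = 1 + \leg{t^2-4p}{\ell}/\ell + O(1/\ell^2)$ together with the square-root cancellation of the Kronecker symbol, while the finite product over $\ell \leq Y$ accumulates errors of the form $R(N,h;q)$ with $q$ supported on primes $\leq Y$. Balancing these contributions and optimizing in $Y$ produces the factor $R(N,h;1)^{1/3}$ with an additional polylog. For the averaged statement, I would integrate the pointwise bound over $N \leq x$ and invoke a Bombieri--Vinogradov type estimate to obtain $\sum_{N\leq x} R(N,h;1)^{1/3} \ll x/(\log x)^A$. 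The principal obstacle is controlling the merely conditional convergence of $\prod_\ell f_\ell(t_p,p)$ uniformly in $N$, which is exactly the technical innovation embodied in Theorem \ref{singular series average}; verifying its hypotheses for this particular family of weights and local factors, and ensuring that the truncation exponent $1/3$ comes out cleanly, is where the real work will lie.
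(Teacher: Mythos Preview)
Your overall architecture is correct: Gekeler's theorem, then Theorem \ref{singular series average} with $\CA = \{p : N^-<p<N^+\}$, $\CG(\ell) = (\Z/\ell\Z)^*$, $F(x) = (x+1-N)^2-4x$ of degree $2$, and $D_1(x) = x^2 F(x)$. This is exactly the paper's route. But two of your specific claims are wrong, and one of them is a genuine gap.

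The gap is your treatment of the archimedean weight
\[
W = \sum_{N^-<p<N^+} w_p, \qquad w_p = f_\infty(p+1-N,p)\, f_p(p+1-N,p).
\]
You say this is handled by partial summation and the Prime Number Theorem with relative error $O((\log N)^{-A})$. It is not: the Hasse interval $(N^-,N^+)$ has length $\asymp 4\sqrt N$, far below any range where PNT gives asymptotics. Unconditionally we cannot even guarantee a single prime there. The paper evaluates $W$ (and the equidistribution error $E$) via Lemma \ref{W2}, a short-interval estimate that produces the main term $1/\log N$ \emph{plus} an error involving $\int_{N^-}^{N^+} E(y,h;b)\,\dee y$; this is precisely where $R(N,h;1)$ enters the first display of the theorem. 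So $R(N,h;1)$ is not merely an Euler-product truncation artifact---it already sits inside your ``easy'' archimedean sum.

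Second, the exponent $1/3$ does not come from optimizing a truncation parameter $Y$. In Theorem \ref{singular series average} the error term is $\W^{m/(m+1)} E^{1/(m+1)}$ with $m = \deg F$; here $m=2$ and $1/(m+1)=1/3$. Relatedly, the tail $\prod_{\ell>z}$ is not controlled by pointwise square-root cancellation of Kronecker symbols but by the zero-density input of Lemma \ref{lemmashortproduct}, which gives the truncation for all $p$ outside a small exceptional conductor set; that exceptional set is absorbed into the $M X^\epsilon$ term, and $M$ is bounded here by counting representations $(p-1-N)^2 - n m^2 = 4N$.

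For the second assertion, summing over $N\le x$, you are right that it follows by integrating the first and appealing to a Bombieri--Vinogradov estimate in short intervals (the paper's Lemma \ref{bv-short}).
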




Next, let $\bs p=(p_1,\dots,p_d)$ be a $d$-tuple of distinct primes. The probability that choosing a `random' elliptic curve $E/\Q$ such that the primes $p_1,\dots,p_d$ form an {\it elliptic aliquot cycle of length $d$}, that is to say, $|E(\F_{p_j})|=p_{j+1}$, for $j\in\{1,\dots,d\}$ (with the notational convention that $p_{d+1}=p_1$), is given by
\[
\alpha_d(\bs p) :=  \prod_{j=1}^d \prob_{\CC_{p_j}}( |E_j(\F_{p_j})|=p_{j+1})  .
\]
This can also be interpreted as the probability of choosing randomly and independently $d$ elliptic curves $E_1,\dots,E_d$ over $\F_{p_1},\dots,\F_{p_d}$, respectively, with the property that $|E(\F_{p_j})|=p_{j+1}$, for $j\in\{1,\dots,d\}$. Our next goal is to understand the average size of $\alpha_d(\bs p)$, a question which arose in the work of Silverman and Stange \cite{SS:2011} and has been also studied by Jones \cite{Jones:2013} and Parks \cite{Parks:2014a,Parks:2014b}. Of course, Hasse's bound implies that for $\alpha_d(\bs p)$ to be non-zero, we must have that $|p_{j+1}-p_j-1|<2\sqrt{p_j}$ for all $j\in\{1,\dots,d\}$. To this extent, we define the set
\eq{P_d(x)}{
\CP_d(x) = \{(p_1,\dots,p_d): p_1\le x,\ |p_{j+1}-p_j-1|<2\sqrt{p_j},\ 1\le j\le d\}.
}
Then we have the following estimate, which sharpens and generalizes Theorem 1.6 in \cite{Parks:2014a} and Theorem 1.4 in \cite{Parks:2014b}, and proves the vertical distribution for aliquot cycles of length $d$ for all $d \geq 2$. For the case $d=2$, the same result was proven independently by Parks in \cite{Parks:2014a} with contributions from Giri using a different technique, and we discuss in the remark after Theorem \ref{thm:aliquot} the relation between both results. A precise conjecture for the horizontal distribution was made by Jones in \cite{Jones:2013}  following the probabilistic model of Lang-Trotter, and Theorem \ref{aliquot} confirms this asymptotic.


\begin{thm}\label{aliquot} \label{thm:aliquot}
For all $x\ge2$ and any fixed $A>0$, we have that
\[
\sum_{\bs p \in \CP_d(x)} \alpha_d(\bs p)
	= C_{\text{aliquot}}^{(d)} \int_2^x \frac{du}{2\sqrt{u}(\log u)^d}
		+ O_A\left( \frac{\sqrt{x}}{(\log x)^A} \right)
	 \sim C_{\text{aliquot}}^{(d)} \frac{\sqrt{x}}{(\log x)^d} ,
\]
where $C_{\text{aliquot}}^{(d)}$ is defined the be the product of the archimedian factor
\[
\frac{2^d}{\pi^d}
	\idotsint\limits_{\substack{|t_j|\le 1\ (1\le j\le d) \\ t_1+\cdots+t_d=0 }}
		\prod_{j=1}^d \sqrt{1-t_j^2} \ \dee t_1\cdots \dee t_{d-1} \\
\]
times the singular series
\[
\prod_{\ell} \lim_{r \rightarrow \infty} \frac{\ell^{rd}\cdot \
		\# \left\{ \bs\sigma\in \GL_2(\Z/\ell^r\Z)^d :
		\begin{array}{l}
		\det(\sigma_j) +1 -\tr(\sigma_j) \equiv  \det(\sigma_{j+1}) \mod{\ell^r}\\
		\mbox{for $1\le j\le d$, where $\sigma_{d+1}=\sigma_1$}
		\end{array} \right\}}{\left| \GL_2(\Z/\ell^r \Z) \right|^d} .
\]
\end{thm}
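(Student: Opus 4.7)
The plan is to apply Gekeler's Theorem \ref{Gekeler} to each of the $d$ factors in $\alpha_d(\bs p)$ and then invoke the main averaging theorem (Theorem \ref{singular series average}). Setting $t_j := p_j + 1 - p_{j+1}$ with the cyclic convention $p_{d+1} = p_1$, Gekeler's theorem gives
\[
\alpha_d(\bs p) \;=\; \prod_{j=1}^d f_\infty(t_j, p_j) \cdot \prod_\ell \prod_{j=1}^d f_\ell(t_j, p_j).
\]
The strategy is to sum this identity over $\bs p \in \CP_d(x)$, treating $\prod_j f_\infty(t_j, p_j)$ as the archimedean weight and the inner Euler product over $\ell$ as the quantity to be averaged.

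An application of Theorem \ref{singular series average}, set up in the $2d$-dimensional space of parameters $(\bs t, \bs p)$ coupled by the linear relations $t_j \equiv p_j + 1 - p_{j+1} \pmod{\ell^r}$, replaces $\prod_\ell \prod_j f_\ell(t_j, p_j)$ by the Euler product of its local averages. Using the definition of $f_\ell$ in terms of counts of matrices with prescribed trace and determinant, direct bookkeeping identifies the $\ell$-th average factor with
\[
\lim_{r \to \infty} \frac{\ell^{rd} \cdot \#\{\bs\sigma \in \GL_2(\Z/\ell^r\Z)^d : \det(\sigma_j) + 1 - \tr(\sigma_j) \equiv \det(\sigma_{j+1}) \pmod{\ell^r},\, 1 \le j \le d\}}{|\GL_2(\Z/\ell^r\Z)|^d},
\]
which is exactly the $\ell$-factor of the singular series in the theorem statement. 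What remains is to evaluate the archimedean main sum $M(x) := \sum_{\bs p \in \CP_d(x)} \prod_{j=1}^d f_\infty(t_j, p_j)$. The cyclic identity $\sum_j t_j = d$, together with the change of variables $u_j = t_j/(2\sqrt{p_j}) \in (-1,1)$, translates to the hyperplane constraint $u_1 + \cdots + u_d = O(1/\sqrt{p_1})$, which collapses to $u_1 + \cdots + u_d = 0$ as $p_1 \to \infty$. Approximating the nested inner sums over primes $p_2, \dots, p_d$ in short intervals of length $\asymp \sqrt{p_1}$ by integrals (via a Bombieri--Vinogradov-type estimate, after averaging over $p_1 \leq x$) and then applying the prime number theorem to the outer sum over $p_1$ yields
\[
M(x) \;=\; \frac{2^d}{\pi^d}\idotsint\limits_{\substack{|u_j|\le 1\\ u_1+\cdots+u_d=0}} \prod_{j=1}^d \sqrt{1-u_j^2}\,du_1\cdots du_{d-1} \;\cdot\; \int_2^x \frac{du}{2\sqrt{u}(\log u)^d} \;+\; O_A\!\left(\frac{\sqrt{x}}{(\log x)^A}\right),
\]
which combined with the singular series produces the asserted asymptotic.

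The chief technical obstacle is the conditional (rather than absolute) convergence of the Euler product $\prod_\ell f_\ell(t, p)$ emphasized in the remark preceding Theorem \ref{LT}; a naive truncation of the product is not feasible, which is precisely why the machinery of Theorem \ref{singular series average} is needed in the first place, and why its hypotheses must be verified carefully for the $d$-dimensional product $\prod_j f_\ell(t_j, p_j)$. A secondary difficulty is the $d$-fold nesting of short-interval prime sums in the evaluation of $M(x)$: each inner sum taken in isolation would require unproven conjectures on primes in short intervals (compare the unconditional statement of Theorem \ref{MEN}, where the explicit error term $R(N,h;1)$ cannot be removed), but the outer average over $p_1 \leq x$ provides just enough room for an iterated application of Bombieri--Vinogradov to absorb the short-interval errors into the acceptable $O_A(\sqrt{x}/(\log x)^A)$ remainder.
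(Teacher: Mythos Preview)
Your overall strategy is the same as the paper's: rewrite $\alpha_d(\bs p)$ via Gekeler's theorem, invoke Theorem~\ref{singular series average} to replace the Euler product by the product of local averages, and compute the archimedean main term via a Bombieri--Vinogradov-type estimate for primes in short intervals (the paper packages this as Lemma~\ref{W}). Your identification of the two technical obstacles is also correct.

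There are, however, two points that need attention. First, your ``$2d$-dimensional space of parameters $(\bs t,\bs p)$'' does not fit the framework of Theorem~\ref{singular series average}: that theorem takes a set $\CA\subset\Z^d$ and functions $\delta_\ell$ defined on it, not a constrained subspace of $\Z^{2d}$. The paper works directly in dimension $d$ with $\CA=\CP_d'(x)$ and $\delta_\ell(\bs p)=-1+\prod_{j=1}^d f_\ell(p_j+1-p_{j+1},p_j)$; the $t_j$'s are functions of $\bs p$, not additional variables.

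Second, and more substantively, you gloss over the verification of condition~(4'c), writing only that ``direct bookkeeping identifies the $\ell$-th average factor'' with the stated limit. But in the aliquot case the sequence over $r$ defining $\Delta_\ell$ does \emph{not} stabilize for small $r$ (unlike in Theorems~\ref{LT}, \ref{Koblitz}, \ref{MEN}, \ref{MEG}), as the remark after Theorem~\ref{thm:aliquot} points out. The paper supplies a separate Cauchy-sequence argument, using Theorem~\ref{Gekeler thm}(a) and the bound $N_D(\ell^j)\ll\ell^{j/2}$, to show the limit exists. This is a genuine step specific to this theorem and must be addressed.
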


\begin{rmk} Unlike the situation in Theorems \ref{LT}, \ref{Koblitz}, \ref{MEN} and \ref{MEG}, the sequence
\[
P_{\text{aliquot}}^{(d)}(\ell^r)
	:= \frac{\ell^{rd}\cdot \
		\# \left\{ \bs\sigma\in \GL_2(\Z/\ell^r\Z)^d :
		\begin{array}{l}
		\det(\sigma_j) +1 -\tr(\sigma_j) \equiv  \det(\sigma_{j+1}) \mod{\ell^r}\\
		\mbox{for $1\le j\le d$, where $\sigma_{d+1}=\sigma_1$}
		\end{array} \right\}}{\left| \GL_2(\Z/\ell^r \Z) \right|^d}
\]
does not seem to become constant for large enough $r$. We do prove that the sequence $P_{\text{aliquot}}^{(d)}(\ell^r)$ converges as $r\to\infty$ and that its limit satisfies the asymptotic estimate
\[
\lim_{r\to\infty}P_{\text{aliquot}}^{(d)}(\ell^r)=1+O_d\left(\frac{1}{\ell^{3/2}}\right),
\]
but we do not have a closed expression for its value. For the case $d=2$, Parks \cite{Parks:2014b}, with contributions from Giri, obtained Theorem \ref{aliquot} with a different technique, without using Gekeler's theorem, but following similar steps as in the original proofs of Theorems \ref{LT}, \ref{Koblitz}, \ref{MEN} and \ref{MEG} \cite{FM:1996, DP:1999, BCD:2011, DS-MEN, DS-MEG}. Theorem 1.4 of \cite{Parks:2014b} (or, rather, its proof) implies that
\[
\lim_{r \rightarrow \infty} P_{\text{aliquot}}^{(2)}(\ell^r)
	= 1 - \frac{(2\ell^4 +3\ell^3) (\ell-2) - (\ell-1) (\ell^4-2\ell^3-4\ell^2+1)}{(\ell-1)(\ell^2-1)^3} ,
\]
and it remains a challenge to obtain a proof of this formula by a direct calculation of $P_{\text{aliquot}}^{(2)}(\ell^r)$.
\end{rmk}


So far the questions we have introduced involved understanding the probability that $a_p(E)$ or $\#E(\F_p)$ has a certain property. Next, we will study questions about the group structure of $E(\F_p)$, where $E$ is an elliptic curve over $\F_p$. It is well-known that
\[
E(\F_p)\cong\Z/m\Z\times\Z/mk\Z
\]
for some positive integers $m$ and $k$ satisfying the Hasse bound $|p+1-m^2k|<2\sqrt p$, which can be rewritten as $N^-<p<N^+$, where $N=m^2k$. Moreover, the Weil pairing implies that such a prime $p$ must lie in the class $1\mod{m}$. The following theorem is the analogous result to Theorem \ref{Gekeler} for $\prob_{\CC_p}(E(\F_p)\cong G)$, where $G$ is a group of the form $\Z/m\Z\times\Z/mk\Z$. As in Gekeler \cite{Gekeler:2003}, our starting point is a formula similar to \eqref{Deuring} proven by Schoof, which we reinterpret probabilistically.


\begin{thm}\label{matrix interpretation for M_p(G)}\label{Gekeler for groups}
Let $p$ be a fixed prime number. Given positive integers $m$ and $k$, let $t=t(m,k)=p+1-m^2k$ and $G=\Z/m\Z\times\Z/mk\Z$. We have that
\[
\prob_{\CC_p} ( E(\F_p)\cong G) = f_\infty(t,p)\cdot\prod_\ell f_\ell(G,p),
\]
where $f_\infty(t,p)$ is defined by~\eqref{discrete Sato-Tate}, for each prime $\ell$,
\[
f_\ell(G,p)= \lim_{r\rightarrow\infty}\frac{\ell^r\phi(\ell^r) \cdot\#\left\{\sigma\in M_2(\Z/\ell^r\Z) :
	\begin{array}{l}
	\tr(\sigma)\equiv t\pmod{\ell^r},\\
	\det(\sigma)\equiv p\pmod{\ell^r},\\
	\sigma\equiv I\pmod{\ell^{\nu_\ell(m)}},\\
	\sigma\not\equiv I\pmod{\ell^{\nu_\ell(m)+1}}
	\end{array}\right\}}
	{|\GL_2(\Z/\ell^r\Z)|}  .
\]
\end{thm}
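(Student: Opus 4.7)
The plan is to adapt the two-step strategy used to prove Theorem \ref{Gekeler}: first invoke the group-structure analogue of Deuring's formula \eqref{Deuring}, due to Schoof, which converts the left-hand side into a class-number expression, and then translate that class-number expression into a product of local matrix counts by extending the combinatorial argument of Section \ref{matrix count section}.

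For the first step, Schoof's theorem states that when $m\mid p-1$, $|t|<2\sqrt{p}$, and $t = p+1-m^2k$, one has
\[
\prob_{\CC_p}(E(\F_p)\cong G) = \frac{H((t^2-4p)/m^2)}{p},
\]
while both sides vanish when $m\nmid p-1$ or $|t|\ge 2\sqrt p$; for the right-hand side one notes that $\sigma\equiv I\pmod{\ell^{\nu_\ell(m)}}$ forces $p\equiv 1\pmod{\ell^{\nu_\ell(m)}}$, so if $m\nmid p-1$ then some $f_\ell(G,p)$ vanishes. The target identity therefore reduces to
\[
\frac{H((t^2-4p)/m^2)}{p} = f_\infty(t,p)\cdot\prod_\ell f_\ell(G,p).
\]

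The conceptual bridge for the second step is that $\sigma_\ell$ describes Frobenius acting on $E[\ell^\infty]$, and $\sigma_\ell\equiv I\pmod{\ell^j}$ holds exactly when $E[\ell^j]\subset E(\F_p)$, so the congruences imposed on $\sigma$ in the definition of $f_\ell(G,p)$ encode the $\ell$-primary structure of $G$. To make the reduction concrete, I would substitute $\sigma = I + \ell^{\nu_\ell(m)}\tau$ with $\tau\not\equiv 0\pmod\ell$; using the identity $\det(I+A) = 1+\tr(A)+\det(A)$ for $2\times 2$ matrices, this recasts $f_\ell(G,p)$ as a count of matrices $\tau\in M_2(\Z/\ell^{r-\nu_\ell(m)}\Z)$ with trace $(t-2)/\ell^{\nu_\ell(m)}$, determinant $m^2k/\ell^{2\nu_\ell(m)}$, and characteristic polynomial of discriminant $(t^2-4p)/\ell^{2\nu_\ell(m)}$. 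This is precisely the shape of the matrix count in Theorem \ref{Gekeler} applied to the rescaled discriminant $(t^2-4p)/m^2$, so the combinatorial arguments of Section \ref{matrix count section} should yield the required local factors of $H((t^2-4p)/m^2)/p$ with only minor modifications.

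The main technical obstacle will be the local matrix enumeration at the finitely many bad primes---those dividing $m$ or the conductor of $(t^2-4p)/m^2$. Away from such primes, $\tau$ is regular semisimple modulo $\ell$ and the count is transparent; at bad primes one must carefully enumerate non-scalar matrices satisfying the prescribed congruences, following the case analysis used in Section \ref{matrix count section}. Connected to this is the stabilization question: verifying that the limit defining $f_\ell(G,p)$ exists and becomes constant for $r$ sufficiently large, in the spirit of Theorem \ref{Gekeler thm}. Extra care is needed here because the additional congruence $\sigma\equiv I\pmod{\ell^{\nu_\ell(m)}}$ interacts with the structure of $M_2(\Z/\ell^r\Z)$ in a way absent from the proof of Theorem \ref{Gekeler}, but the combinatorial framework developed there should be flexible enough to accommodate it.
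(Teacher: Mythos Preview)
Your first step misquotes Schoof. What Schoof actually proves (see the paragraph before Corollary~\ref{matrix interpretation for M_p(t,n)}) is
\[
\prob_{\CC_p}\bigl(a_p(E)=t,\ E(\F_p)[n]\cong\Z/n\Z\times\Z/n\Z\bigr)=\frac{H(D/n^2)}{p},
\]
i.e.\ the probability that the \emph{full $n$-torsion is rational}, not that the group is exactly $G$. These differ precisely by the curves with $E(\F_p)[jm]\cong(\Z/jm\Z)^2$ for some $j>1$ with $j^2\mid k$. Concretely, take $m=1$ and $k$ a perfect square: then $H(t^2-4p)/p$ counts every curve with $a_p(E)=t$, including non-cyclic ones, so it cannot equal $\prob_{\CC_p}(E(\F_p)\cong\Z/k\Z)$.

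This gap is not cosmetic: your right-hand side manipulation is correct and, via the substitution $\sigma=I+\ell^{\nu_\ell(m)}\tau$ with $\tau\not\equiv 0\pmod\ell$, produces local factors that (in the paper's notation from Theorem~\ref{Gekeler thm}(b)) equal $f_\ell(t,p,m)-f_\ell(t,p,\ell m)$, not $f_\ell(t,p,m)$. The product of the latter is what matches $H(D/m^2)/p$; the product of the former matches $\sum_{j^2\mid k}\mu(j)H(D/(jm)^2)/p$. So when you try to close the loop you will find a discrepancy exactly at primes $\ell$ with $\ell^2\mid k$ and $\ell\mid(p-1)/m$. The fix is to insert the inclusion-exclusion step
\[
\prob_{\CC_p}(E(\F_p)\cong G)=\sum_{j^2\mid k}\mu(j)\,\prob_{\CC_p}\bigl(a_p(E)=t,\ E(\F_p)[jm]\cong(\Z/jm\Z)^2\bigr),
\]
apply Schoof to each summand, and then fold the M\"obius sum back into the Euler product prime by prime---which is exactly what the paper does.
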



\begin{rmk}
Note that, in accordance with the Hasse bound, $f_\infty(t,p)$ vanishes unless $|t|<2\sqrt p$. Furthermore, $f_\ell(G,p)$ vanishes if $p\not\equiv 1\pmod{\ell^{\nu_\ell(m)}}$. This is all in accordance with the restriction imposed by the Weil pairing.
Therefore, the probability of choosing an elliptic curve $E/\F_p$ with group $G$ is equal to zero unless we have both $|t|<2\sqrt p$ and $p\equiv 1\pmod m$, in which case the probability is nonzero.
\end{rmk}


We shall use Theorem \ref{Gekeler for groups} to deduce two other results. The first one is a reproof of Theorem 2.5 in \cite{CDKS2}, where the constant $C(G)$ below is denoted by $K(G) \cdot |G| / |\mbox{Aut}(G)|$ in  \cite{CDKS2}. Dealing with this constant using the original technique of averaging class numbers as in \cite{CDKS2} involves lengthy unpleasant computations, whereas the new proof we present here gives directly the value of $C(G)$ as a product of matrix counts. From Theorem \ref{MEG}, one can recover the average results of \cite{DS-MEG} using some additional hypotheses on the distribution of primes in short arithmetic progression to control the error terms, and the unconditional results of
\cite{CDKS2} for ``most" groups $G$.


\begin{thm}\label{MEG}
Fix $\epsilon>0$ and $A\ge10$. Let $G=\Z/m\Z\times\Z/mk\Z$ with $k\ge2$ and $1\le m\le k^A$. If $N=m^2k$ and $h\in[mk^{\epsilon},\sqrt{N}/(\log k)^{2A+1}]$, then
\[
\sum_p \prob_{\CC_p}( E(\F_p)\cong G)   =  \frac{C(G)}{\log|G|}\left( 1+
				 O\left(  \frac{1}{(\log k)^A} + \frac{(\log\log k)^{O(1)} R(N,h;m)^{1/3}}{\log k} \right)   \right) ,
\]
where  the implied constants depend at most on $\epsilon$ and $A$, and
\[
C(G) := \prod_{\ell}
	\lim_{r\to\infty} \frac{\ell^r\cdot \#\left\{\sigma\in\GL_2(\Z/\ell^r\Z) :
	\begin{array}{l}
	\tr(\sigma)\equiv \det(\sigma)+1-N \mod{\ell^r},\\
	\sigma\equiv I\mod{\ell^{\nu_\ell(m)}},\\
	\sigma\not\equiv I\mod{\ell^{\nu_\ell(m)+1}}
	\end{array}\right\}} {\#\GL_2(\Z/\ell^r\Z)} .
\]
\end{thm}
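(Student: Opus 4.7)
The plan is to express the probability on the left-hand side as a product over local factors via Theorem \ref{Gekeler for groups}, and then feed the result into the general averaging theorem, Theorem \ref{singular series average}. Writing $t_p = p+1-N$, Theorem \ref{Gekeler for groups} gives
\[
\sum_p \prob_{\CC_p}(E(\F_p)\cong G) = \sum_{N^-<p<N^+} f_\infty(t_p, p) \prod_\ell f_\ell(G,p),
\]
where the outer sum is effectively restricted to primes with $p\equiv 1\pmod m$, since otherwise $f_\ell(G,p)=0$ at some prime $\ell\mid m$.

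To apply Theorem \ref{singular series average}, one must verify that the local factors $f_\ell(G,p)$ satisfy its structural hypotheses: each depends on $p$ only through its residue modulo a bounded power of $\ell$, and $f_\ell(G,p)=1+O(\ell^{-3/2})$ at all but finitely many primes. For $\ell\nmid m$, these properties reduce to the corresponding statement for $f_\ell(t_p,p)$ from Theorem \ref{Gekeler}, the additional condition $\sigma\not\equiv I\pmod{\ell}$ having negligible effect on the density. For $\ell\mid m$, one must analyze the matrix count with the combined conditions $\sigma\equiv I\pmod{\ell^{\nu_\ell(m)}}$ and $\sigma\not\equiv I\pmod{\ell^{\nu_\ell(m)+1}}$, establishing stabilization of the limit as $r\to\infty$ and isolating explicitly the contribution responsible for the $\phi(\ell^{\nu_\ell(m)})$ weight.

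With the hypotheses in hand, Theorem \ref{singular series average} decouples the archimedean and non-archimedean contributions. The non-archimedean sum collapses to an Euler product over $\ell$ of the averages of $f_\ell(G,p)$ over $p$ in the appropriate residue classes; at primes $\ell\mid m$, the restriction $p\equiv 1\pmod{\ell^{\nu_\ell(m)}}$ extracts a factor of $\phi(\ell^{\nu_\ell(m)})$, so that the product equals $\phi(m)\cdot C(G)$. The archimedean part reduces to
\[
\sum_{\substack{N^-<p<N^+\\p\equiv 1\pmod m}} f_\infty(p+1-N, p) \sim \frac{1}{\phi(m)\log N}\int_{-1}^{1} \frac{2}{\pi}\sqrt{1-u^2}\,\dee u = \frac{1}{\phi(m)\log N},
\]
by partial summation against the short-interval prime-counting function in arithmetic progressions, with the error controlled by $R(N,h;m)$ for the specified range of $h$. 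Multiplying, the two copies of $\phi(m)$ cancel and we obtain the main term $C(G)/\log N = C(G)/\log|G|$.

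The principal obstacle will be the careful analysis at primes $\ell\mid m$, where the matrix counts are genuinely more delicate than in Theorem \ref{MEN}: one must prove stabilization of the limit defining $f_\ell(G,p)$ for $r$ large in terms of $\nu_\ell(m)$, extract the precise $\phi(\ell^{\nu_\ell(m)})$ weight, and maintain uniform bounds suitable for insertion into Theorem \ref{singular series average}. A secondary difficulty is uniformity in the wide range $1\le m\le k^A$: the lower bound $h\ge m k^\epsilon$ keeps the modulus $m$ admissible on the scale of $h$ so that Bombieri--Vinogradov-type inputs apply, while the upper bound $h\le \sqrt N/(\log k)^{2A+1}$ ensures $f_\infty(p+1-N,p)$ is nearly constant on each length-$h$ subinterval; together with the estimate $\log m\ll_A \log k$ permitted by $m\le k^A$, these give the stated error term $(\log k)^{-A}+(\log\log k)^{O(1)}R(N,h;m)^{1/3}/\log k$.
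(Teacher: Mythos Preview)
Your overall strategy---start from Theorem \ref{Gekeler for groups} and feed into Theorem \ref{singular series average}---is exactly what the paper does, and your accounting of the archimedean piece via Lemma \ref{W2} is right. But there is a concrete gap in how you propose to handle the primes $\ell\mid m$. As you note, $f_\ell(G,p)\asymp \ell^{-\nu_\ell(m)}$ there, so $\delta_\ell(p):=f_\ell(G,p)-1$ is of size $\asymp 1$, not $O(1/\ell)$; this violates conditions (2) and (4'd) of Theorem \ref{singular series average}, and the theorem simply does not apply to such $\delta_\ell$. Saying that the restriction $p\equiv 1\pmod{\ell^{\nu_\ell(m)}}$ ``extracts a factor of $\phi(\ell^{\nu_\ell(m)})$'' is not enough: the machinery of Theorem \ref{singular series average} never sees the congruence on the summation variable at scale $\ell^{\nu_\ell(m)}$ in that way.

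The paper's fix is a reparametrization that you do not mention but which is essential. One sets $a=(p-1)/m$, so that $\CA$ is now a set of integers of size $\asymp\sqrt{k}$ (whence $X=2\sqrt{k}$), with $\CG(\ell)=\{a:1+am\not\equiv 0\pmod\ell\}$. One then \emph{normalizes} by defining $1+\delta_\ell(a)=\ell^{\nu_\ell(m)}f_\ell(G,1+am)$, pulling the global factor $1/m$ outside the sum (relation \eqref{def-prob}). The point is that Theorem \ref{Gekeler thm}(b), applied with $n=m$, rewrites $\ell^{\nu_\ell(m)}f_\ell(G,1+am)$ as $f_\ell^*(a-mk,k,1)$, a genuine ratio of matrix counts with no extra $\ell$-power weight; this is what yields $\delta_\ell(a)=O(1/\ell)$ uniformly, and makes $F(a)=(a-mk)^2-4k$ the correct discriminant polynomial with $\SC(F)\ll 1$. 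The computation of $\Delta_\ell$ then proceeds via the change of variable $g=I+m\sigma$, which recovers exactly the local factor in $C(G)$ together with the weight $\phi(\ell^{2\nu_\ell(m)})$; combined with the outer $1/m$ and the $1/(\phi(m)\log N)$ from the archimedean side, this gives $C(G)/\log|G|$. Your proposal would become correct once this reparametrization and normalization are inserted; without them, the hypotheses of Theorem \ref{singular series average} are not met.
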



Our last result is a reproof of a weaker version of a result due to Vl{\u{a}}du{\c{t}} \cite{Vla:1999}, who built on work by Howe \cite{Howe:1993}.


\begin{thm}\label{cyclicity}
For $p$ a prime and $A\ge1$, we have that
\[
\sum_p \prob_{\CC_p}( E(\F_p)\ \text{is cyclic})  = C_{\text{cyclic}}(p)
	+ O_A\left(\frac{1}{(\log p)^A}\right) ,
\]
where
\[
C_{\text{cyclic}}(p) := \prod_{\ell\neq p} \frac{\#\left\{\sigma\in\GL_2(\Z/\ell\Z)\setminus\{I\} :
	\det(\sigma)\equiv p\pmod{\ell} \right\}}
	{\#\left\{\sigma\in\GL_2(\Z/\ell\Z) : \det(\sigma)\equiv p\pmod{\ell}\right\}}
	= \prod_{\ell \mid (p-1)} \left(1- \frac{1}{\ell(\ell^2-1)}\right) .
\]
\end{thm}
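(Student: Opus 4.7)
The strategy is to exchange the cyclicity condition for a M\"obius sum of probabilities of the form $\prob_{\CC_p}(E[d]\subseteq E(\F_p))$, evaluate each probability via Theorem~\ref{Gekeler for groups}, and then apply the averaging theorem~\ref{singular series average}. Writing $E(\F_p)\cong\Z/m_E\Z\times\Z/m_Ek_E\Z$, cyclicity is precisely $m_E=1$, so
\[
\prob_{\CC_p}\bigl(E(\F_p)\text{ is cyclic}\bigr) = \sum_{d\ge1}\mu(d)\,\prob_{\CC_p}(d\mid m_E).
\]
Only squarefree $d$ contribute; the Hasse bound gives $m_E^2\le(\sqrt p+1)^2$, so the sum truncates to $d\le\sqrt p+1$; and the Weil pairing forces $p\equiv 1\pmod{m_E}$, so the summand vanishes unless $d\mid p-1$.

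For each squarefree $d\mid p-1$, summing Theorem~\ref{Gekeler for groups} over all groups $G=\Z/m\Z\times\Z/mk\Z$ with $d\mid m$ telescopes cleanly: at each prime $\ell\nmid d$ the sum over $\nu_\ell(m)\ge 0$ recovers the usual factor $f_\ell(t,p)$, while at each $\ell\mid d$ the sum over $\nu_\ell(m)\ge 1$ produces a factor $\tilde g_\ell(t,p)$, defined by the same limit as $f_\ell(t,p)$ in Theorem~\ref{Gekeler} but with the added condition $\sigma\equiv I\pmod\ell$. This yields
\[
\prob_{\CC_p}(d\mid m_E) = \sum_{t\in\Z} f_\infty(t,p)\prod_{\ell\mid d}\tilde g_\ell(t,p)\prod_{\ell\nmid d}f_\ell(t,p).
\]
I plug this into Theorem~\ref{singular series average}. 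The archimedean factor $\int_\R f_\infty(t,p)\,dt = 1$ and Remark~\ref{p-adic}'s identity $\int_{\Z_\ell} f_\ell(t,p)\,d\mu_\ell(t)=1$ take care of the primes $\ell\nmid d$. For $\ell\mid d$ (and hence $\ell\mid p-1$), a direct count at level $r=1$ gives
\[
\int_{\Z_\ell}\tilde g_\ell(t,p)\,d\mu_\ell(t) = \frac{1}{\#\{\sigma\in\GL_2(\Z/\ell\Z):\det\sigma\equiv p\pmod\ell\}} = \frac{1}{\ell(\ell^2-1)},
\]
since the only matrix in $\GL_2(\Z/\ell\Z)$ congruent to $I\pmod\ell$ is $I$ itself.

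Consequently, up to error, $\prob_{\CC_p}(d\mid m_E) = \prod_{\ell\mid d}(\ell(\ell^2-1))^{-1}$, and the M\"obius sum collapses to
\[
\sum_{\substack{d\mid p-1 \\ d\text{ sqfree}}}\mu(d)\prod_{\ell\mid d}\frac{1}{\ell(\ell^2-1)} = \prod_{\ell\mid p-1}\left(1-\frac{1}{\ell(\ell^2-1)}\right),
\]
which is the claimed $C_{\text{cyclic}}(p)$. The two formulations of $C_{\text{cyclic}}(p)$ in the statement match because at primes $\ell\nmid p-1$ the condition $\det\sigma\equiv p\pmod\ell$ already excludes $\sigma=I$, making the corresponding $\GL_2$-quotient equal to $1$. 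The main technical obstacle will be uniform control of the error term across the whole range $d\le\sqrt p+1$: Theorem~\ref{singular series average} must be applied with a saving of shape $(\log p)^{-A'}$ whose implicit constants are uniform in the side constraints defining $\tilde g_\ell$, and $A'$ chosen large enough in terms of $A$ to absorb the trivial bound $\sqrt p$ on the number of squarefree $d\le\sqrt p+1$ once the trivial inequality $|\mu(d)|\le 1$ is used.
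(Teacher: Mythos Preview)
Your proposed error control in the final paragraph fails: no choice of $A'$ makes $\sqrt{p}\cdot(\log p)^{-A'}\ll(\log p)^{-A}$, so summing a uniform error of size $(\log p)^{-A'}$ over all squarefree $d\le\sqrt{p}+1$ cannot give the target bound. Even restricting to $d\mid p-1$ does not help, since the number of such divisors can exceed any fixed power of $\log p$. There is also a more basic obstruction to invoking Theorem~\ref{singular series average} in the form you describe: at each prime $\ell\mid d$ one has $\tilde g_\ell(t,p)=O(1/\ell)$, so $\delta_\ell(t)=\tilde g_\ell(t,p)-1$ violates condition~(2), and the resulting mean $1+\Delta_\ell=1/(\ell(\ell^2-1))$ is not bounded below uniformly in $\ell$, so condition~(4'c) fails as well unless the implicit constants are allowed to depend on $d$. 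Your route can be salvaged by reparametrising $t=p+1-d^2a$ and renormalising the local factors at $\ell\mid d$ (in the spirit of the proof of Theorem~\ref{MEG}), after which the error terms acquire enough decay in $d$ to be summable; but this is considerably more work than the final paragraph suggests, and the claim that a large $A'$ alone suffices is simply false.

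The paper sidesteps all of this by applying Theorem~\ref{singular series average} just once. One writes $\prob_{\CC_p}(E(\F_p)\ \text{cyclic})=\sum_{p^-<k<p^+}\prob_{\CC_p}(E(\F_p)\cong\Z/k\Z)$, takes $\CA=(p^-,p^+)\cap\Z$, $\CG(\ell)=\Z/\ell\Z$, $w_k=f_\infty(p+1-k,p)f_p(p+1-k,p)$ and $\delta_\ell(k)={\bf 1}_{\ell\neq p}\,(f_\ell(\Z/k\Z,p)-1)$. The factor $f_\ell(\Z/k\Z,p)$ already encodes the condition $\sigma\not\equiv I\pmod\ell$, remains $1+O(1/\ell)$, and its mean over $k\in\Z/\ell^r\Z$ gives $\Delta_\ell=-{\bf 1}_{\ell\mid p-1}/(\ell(\ell^2-1))$ directly. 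The product $\prod_\ell(1+\Delta_\ell)$ is then exactly $C_{\text{cyclic}}(p)$, with no M\"obius sum and no accumulation of errors over $d$.
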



\subsection{Outline of the paper}

Before we embark on the more technical aspects of the paper, we discuss in Section \ref{section RMT} the connection between Gekeler's theorem and Theorem \ref{Gekeler for groups} with the general equidistribution philosophy originating from the work of Deligne, saying that Frobenius elements of elliptic curves (and in general abelian varieties) are equidistributed in groups of matrices. This provides a natural explanation for the local factors $f_\ell(t,p)$ obtained by Gekeler, at least when the prime $\ell$ is small compared to $p$. A more complete analysis of Gekeler's theorem in terms of equidistribution can be found in \cite{Katz}.

In Section \ref{matrix count section}, we prove Theorems \ref{Gekeler} and \ref{Gekeler for groups} that re-express the quantities $\prob_{\CC_p}(a_p(E)=t)$ and $\prob_{\CC_p}(E(\F_p) \cong G)$ as a product of local probabilities. Theorem \ref{Gekeler} was proven by Gekeler in \cite{Gekeler:2003}, but we present a new proof of his results relying on a more combinatorial approach. We also obtain a slight improvement over his results, showing that the limits defining the local probabilities stabilize earlier in some cases, and this will be important when we apply Theorem \ref{Gekeler} to prove the results of Section \ref{intro}.

Section \ref{singular series} is devoted to stating and explaining our main technical result which deals with averages of certain Euler products. This general result provides a unified framework, under which the results of Section \ref{intro} become easy corollaries. The general result is quite technical, so we begin Section \ref{singular series} by motivating, in a non-rigorous way, our particular choice of hypotheses. Section \ref{singular series AF} then contains the general axiomatic framework in which we will work in and the statement of our first result about sums of Euler products, Theorem \ref{singular series average general}. Then, we state in Section \ref{singular series SF} a second result about sums of Euler products, Theorem \ref{singular series average}, that holds under a simplified set of axioms. This is actually the result that will be invoked in all applications. The advantage of the more general set of axioms is that it is easier to see what is required by the mechanism of the proof, something that could be useful in applications of our results beyond the scope of this paper.

In Section \ref{applications}, we use Theorem \ref{singular series average} to prove the results claimed in Section \ref{intro}. Most of them follow as easy corollaries. However, there are some subtleties when proving Theorem \ref{ST}, especially when the interval $[\alpha,\beta]$ lies very close to 1 or -1. Moreover, Theorems \ref{Koblitz} and \ref{aliquot} require a technical auxiliary result, which will be proven separately in Section \ref{auxiliary}. The main input for this auxiliary result is a theorem about primes in short arithmetic progressions proven by the second author \cite{Kou}. Finally, Section \ref{singular series proof} contains the proof of Theorems \ref{singular series average general} and \ref{singular series average}.


\subsection*{Acknowledgements}
We would like to thank Henri Darmon, Dennis Eriksson, Gerard Freixas i Montplet, Jennifer Park, James Parks and Nicolas Templier for useful conversations. The research of the first author was partially supported by the National Science and Engineering Research Council of Canada (NSERC), and of the second author by NSERC and by the Fonds de recherche du Qu\'ebec -- Nature et technologies (FRQNT).


\section{Links with equidistribution in groups of matrices} \label{section RMT}

We can re-interpret the work of Gekeler about the probability that $a_p(E) = t$ over curves over $\F_p$ in terms of standard equidistribution results for the action of $\mbox{Frob}_p(E)$, the $p$th power Frobenius, on the $\ell$-torsion subgroups $E[\ell]$ as $E$ varies over isomorphism classes of elliptic curves over $\F_p$. This is the framework of the random matrix theory philosophy, initialized by Deligne with its equidistribution theorem, and further developed by Katz and Sarnak, who refined Deligne's equidistribution theorem to predict the statistical behavior for families of curves over finite fields. In a nutshell, the conjugacy classes of the Frobenius at $p$ acting on the $\ell$-torsion subgroup $E[\ell]$ become equidistributed in $\mbox{GL}_2(\F_\ell)$ as one varies over the family of elliptic curves over $\F_p$ and $p$ becomes large enough compared to $\ell$.  In order to make the connection clear, we state a precise theorem for the equidistribution of $\mbox{Frob}_p(E)$. This is based on
\cite{CH:2013}, but other similar explicit results for this case can also be found in \cite{Achter:2006}.

Let $N$ be a positive integer, and we write $N = N' p^e$, where $(N', p) = 1$ and $e \geq 0$ is an integer. Let, also, $E$ be an elliptic curve over $\F_p$. If $e \geq 1$, we further suppose that $E$ is ordinary.
Then
\[
E[N] \cong E[N'] \times E[p^e] \cong \Z/N'\Z \times \Z/N' \Z \times \Z/p^e \Z.
\]
Choosing a basis for $E[N']$ and a generator for $E[p^e] \simeq \Z/p^e \Z,$ the action of the $\mbox{Frob}_p(E)$  is given by a pair
\[
(F,T) \in \GL_2(\Z/N'\Z) \times (\Z / {p^e} \Z)^*
\]
such that
\als{
	\det(F) &\equiv p \ \mod {N'} \\
	\tr(F) &\equiv a_p(E) \ \mod {N'} \\
	T &\equiv a_p(E) \ \mod {p^e}.
}
Then $\mbox{Frob}_p(E)$ corresponds to a  pair $(\mathcal{F}_E, \mathcal{T}_E)$ where $\CF_E$ is a conjugacy class in $\GL_2(\Z/N'\Z)$ of determinant $p$ and $\CT_E \in (\Z / {p^e} \Z)^*$. The following equidistribution theorem was proved by Castryck and Hubrechts in \cite{CH:2013}. We state their result only when $N\le p^{1/4}$. In particular, $e=0$ here. This is without loss of generality, because the result of Castryck and Hubrechts is trivial when $N>p^{1/4}$; its error term becomes $\gg1$ then, and they have to look at elliptic curves over $\F_{q}$ for $q$ a large enough power of $p$ to get the desired equidistribution of the Frobenius.


\begin{thm}\cite[Theorem 2]{CH:2013} \label{thm:CH}
Let $p$ be a prime and $N\in[1,p^{1/4}]\cap\Z$. For any conjugacy class $\mathcal{F}$ in $\GL_2(\Z/ N \Z)$ with determinant $p$, we have
\[
\left| \prob_{\CC_p}(\mathcal{F}_E \in \mathcal{F}) 	
	- \frac{\#\CF}{\#\{\sigma\in \GL_2(\Z/ N\Z): \det(\sigma)\equiv p\mod{N}\}}  \right|
		\ll \frac{N^2 \log\log N}{\sqrt{p}} .
\]
\end{thm}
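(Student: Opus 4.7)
The plan is to combine Deuring's correspondence between isomorphism classes of elliptic curves and ideal classes in imaginary quadratic orders with an explicit analysis of the Frobenius action on the $N$-torsion, then match the resulting sums against the matrix count on the right-hand side. First I would fix $t:=\tr(\mathcal{F})$ and observe that $\mathcal{F}_E\in\mathcal{F}$ forces $a_p(E)=t$, which by Hasse gives $|t|<2\sqrt p$. Since there are only $O(\sqrt p)$ supersingular isomorphism classes, their total contribution to $\prob_{\CC_p}(\mathcal{F}_E\in\mathcal{F})$ is $O(1/\sqrt p)$, which is absorbed by the target error term, so I would restrict attention to ordinary curves throughout.

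Next I would invoke Deuring's theorem to parametrize the ordinary $E/\F_p$ with $a_p(E)=t$ by pairs $(\mathcal{O},[\mathfrak{a}])$, where $\mathcal{O}$ is an order in $K=\Q(\sqrt{t^2-4p})$ containing $\Z[\pi_t]$ with $\pi_t^2-t\pi_t+p=0$, and $[\mathfrak{a}]$ is a class of invertible $\mathcal{O}$-ideals. The weighted contribution of a fixed $\mathcal{O}$ is $h(\mathcal{O})/w(\mathcal{O})$, summing to $H(t^2-4p)$ by \eqref{Deuring}. When $(N,\cond(\mathcal{O}))=1$, the Tate module $E[N]$ is free of rank one over $\mathcal{O}/N\mathcal{O}$ and Frobenius acts by multiplication by $\pi_t$; choosing a $\Z/N\Z$-basis identifies this action with a matrix whose $\GL_2(\Z/N\Z)$-conjugacy class $\mathcal{F}(\mathcal{O})$ depends only on $\mathcal{O}$. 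This reduces the problem to the identity
\[
p\cdot\prob_{\CC_p}(\mathcal{F}_E\in\mathcal{F}) = \sum_{\substack{\mathcal{O}\supseteq\Z[\pi_t]\\ \mathcal{F}(\mathcal{O})=\mathcal{F}}}\frac{h(\mathcal{O})}{w(\mathcal{O})} + O(1),
\]
the error term accommodating supersingular curves and the small set of orders with $\gcd(N,\cond(\mathcal{O}))>1$.

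I would then match this sum against the target $p\cdot\#\mathcal{F}/\#\{\sigma\in\GL_2(\Z/N\Z):\det\sigma\equiv p\pmod N\}$ by working prime-by-prime. At each $\ell\mid N$, matrices in $\GL_2(\Z/\ell^r\Z)$ with $\tr=t$ and $\det=p$ are stratified by the $\ell$-conductor $\ell^e$ of the subring $\Z_\ell[\sigma]$; this parallels the stratification of orders $\mathcal{O}\supseteq\Z_\ell[\pi_t]$ by their $\ell$-part. An explicit matrix count in each stratum, combined with the classical local class number formula $h(\mathcal{O})=h_K\cdot\cond(\mathcal{O})\cdot[\mathcal{O}_K^\times:\mathcal{O}^\times]^{-1}\prod_{\ell\mid\cond(\mathcal{O})}(1-\leg{d_K}{\ell}/\ell)$, identifies the main term after assembling local data via CRT.

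The hard part will be the error analysis producing the shape $N^2\log\log N/\sqrt p$. Two estimates are needed: the Brauer--Siegel-type bound $H(t^2-4p)\ll\sqrt p\log p$ together with averaging over $t$ (the $\log\log N$ ingredient comes from the divisor-sum estimate $\sum_{d\mid N}1/d\ll\log\log N$ used to discard orders whose conductor shares factors with $N$), and the edge contribution from $|t|$ within $O(N^2)$ of $2\sqrt p$, where the leading class number behaves irregularly; here the hypothesis $N\le p^{1/4}$ enters critically to keep the edge term $O(N^2/\sqrt p)$. The main obstacle is the careful bookkeeping at primes $\ell\mid\gcd(N,\cond(\Z[\pi_t]))$, where $E[\ell^r]$ is no longer free over $\mathcal{O}/\ell^r\mathcal{O}$ and both the matrix count and the local class number formula must be interpreted with extra care, but again the hypothesis $N\le p^{1/4}$ confines the cumulative error to the claimed $O(N^2\log\log N/\sqrt p)$.
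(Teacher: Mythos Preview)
The paper does not give its own proof of Theorem~\ref{thm:CH}; it is quoted from Castryck--Hubrechts, and the paper only remarks that their argument is an application of the function-field Chebotar\"ev density theorem to the modular covering $X(p^2;\zeta_{N'})\to X(1;1)$ (equivalently, the Katz--Sarnak equidistribution machinery in Achter's version). Your proposal is an entirely different route via Deuring's theorem and class numbers, so a comparison is warranted---but first there is a genuine gap.

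Your opening step is wrong. You write ``fix $t:=\tr(\mathcal{F})$ and observe that $\mathcal{F}_E\in\mathcal{F}$ forces $a_p(E)=t$''. But $\tr(\mathcal{F})$ is only an element of $\Z/N\Z$; membership in $\mathcal{F}$ forces $a_p(E)\equiv \tr(\mathcal{F})\pmod N$, not equality of integers. With $N\le p^{1/4}$ there are $\asymp 4\sqrt{p}/N$ integers $t$ in $(-2\sqrt p,2\sqrt p)$ lying in that residue class, and you must sum your Deuring/class-number expression over \emph{all} of them. Everything downstream---the parametrization by orders, the local matrix counts, and especially the error analysis---has to be rewritten as an average over this set of $t$'s, not for a single $t$.

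Once that is fixed the outline becomes a class-number averaging problem of the David--Pappalardi type, but your error analysis is then not credible for the stated bound. The individual ``Brauer--Siegel-type'' estimate $H(t^2-4p)\ll\sqrt p\log p$ is far too crude after summing over $\asymp\sqrt p/N$ values of $t$, and your sketch of where the factors $N^2$ and $\log\log N$ come from (edge values of $t$, divisor sums over $N$) does not connect to any mechanism that would actually produce a power saving in $p$. The specific shape $N^2\log\log N/\sqrt p$ in Castryck--Hubrechts comes from Weil's bound on the modular curve (the $\sqrt p$ in the denominator is the Riemann hypothesis over $\F_p$, the $N^2$ is essentially the genus), and there is no analogue of that input in your approach. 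A corrected version of your plan would more plausibly yield an error of the form $(\log p)^{-A}$ after inputting Bombieri--Vinogradov-type information, as in the paper's Theorem~\ref{trace in APs}, which treats the special case where the conjugacy class is determined by its trace---but not the bound claimed here.
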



We remark that Theorem \ref{thm:CH} is proven by an application to the function field Cebotar\"ev's Density Theorem applied to the modular covering $X(p^2; \zeta_{N'}) \rightarrow X(1;1)$. The same result (under some mild restrictions on $p$ and $N$) was proved by Achter \cite{Achter:2008} via a direct application of the Katz-Sarnak equidistribution theorem.

We prove a result which is related to Theorem \ref{thm:CH}. In fact, our result improves the range of validity of the asymptotic in Theorem \ref{thm:CH} to $N\le p^{1/2-\epsilon}$ when $(t^2-4p,N)=1$, since in that case there is only one conjugacy class in $\GL_2(\Z/N\Z)$ of trace $t$ and determinant $p$.


\begin{thm}\label{trace in APs}
Let $\epsilon>0$, $A\ge1$, $p$ be a prime, $N\in[1,p^{1/2-\epsilon}]\cap\Z$ and $t\in\Z$. If
\[
\lambda =  \frac{\#\left\{\sigma\in \GL_2(\Z/N\Z):
		\begin{array}{l}
			\tr(\sigma)\equiv t\mod{N},\\
			\det(\sigma)\equiv p\mod{N}
			\end{array}
			\right\}}{\#\{\sigma\in\GL_2(\Z/ N\Z):\det(\sigma)\equiv p\mod{N}\}} ,
\]
then
\[
\prob_{\CC_p}\left( a_p(E)\equiv t\mod N\right)
	= \lambda \cdot \left(1+O_{\epsilon,A}\left(\frac{1}{(\log p)^A}\right)\right) .
\]
\end{thm}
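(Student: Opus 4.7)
The plan is to apply Theorem \ref{Gekeler} to rewrite the left-hand side as a weighted sum of Euler products, then invoke the main averaging result, Theorem \ref{singular series average}, to evaluate that sum. Specifically,
\[
\prob_{\CC_p}(a_p(E) \equiv t \pmod N)
= \sum_{\substack{|t'|<2\sqrt{p} \\ t' \equiv t \pmod N}} f_\infty(t', p) \prod_\ell f_\ell(t', p),
\]
so the task reduces to estimating a singular-series average over an arithmetic progression of modulus $N$.

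Theorem \ref{singular series average} should then produce a main term equal to the product of local averages of $f_\ell(t', p)$. For a prime $\ell \nmid N$, the trace $t'$ is unconstrained modulo $\ell$ and the local average equals $1$, since $\sum_{t' \in \Z/\ell^r\Z} f_\ell(t', p) = \ell^r$ (this is equivalent to Remark \ref{p-adic}). For a prime $\ell \mid N$ with $v := \nu_\ell(N)$, a short matrix count based on the identity
\[
\#\left\{\sigma \in \GL_2(\Z/\ell^r\Z) :
\begin{array}{l} \tr\sigma \equiv t \pmod{\ell^v} \\ \det\sigma \equiv p \pmod{\ell^r}\end{array}\right\} = \ell^{3(r-v)} \cdot \#\{\sigma \in \GL_2(\Z/\ell^v\Z) : \tr\sigma \equiv t,\ \det\sigma \equiv p\}
\]
(valid for $r \geq v$ because $\ell \nmid p$) gives local average $\ell^v \lambda_\ell$, where $\lambda_\ell$ denotes the $\ell$-factor of $\lambda$. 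The archimedean average $\sum_{|t'|<2\sqrt p,\ t' \equiv t \pmod N} f_\infty(t', p)$ equals $1/N$ up to a negligible Riemann-sum error, since $\int_\R f_\infty(u, p)\,\dee u = 1$. Multiplying,
\[
\frac{1}{N} \cdot \prod_{\ell \mid N} \ell^{\nu_\ell(N)} \lambda_\ell = \prod_{\ell \mid N} \lambda_\ell = \lambda,
\]
which is the asserted main term.

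The main obstacle I anticipate is obtaining the relative error $O((\log p)^{-A})$ uniformly in the range $N \leq p^{1/2 - \epsilon}$. Near the upper end of this range, the inner sum has only about $p^\epsilon$ terms, so Theorem \ref{singular series average} must be applied in a genuinely short-sum regime; one must check that its axiomatic hypotheses from Section \ref{singular series AF} accommodate weights supported on an arithmetic progression whose modulus is comparable to the length of the sum. A secondary technical point is that for primes $\ell$ dividing the fundamental discriminant of $D(t,p)$, the stabilisation of $f_\ell(t',p)$ as $r\to\infty$ is slow, so the truncation of the Euler product in the averaging argument must be handled with care; the sharpening recorded in Theorem \ref{Gekeler thm}, combined with the estimate $f_\ell(t',p)=1+O(1/\ell^{3/2})$ valid for $\ell\nmid D(t',p)$, is what keeps the tail contributions under control.
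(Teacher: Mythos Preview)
Your proposal is correct and follows essentially the same route as the paper: both start from Theorem~\ref{Gekeler}, reparametrise the trace as $s=t+aN$ so that the sum runs over integers $a$ in an interval of length $\asymp\sqrt p/N\ge p^\epsilon$, and then apply Theorem~\ref{singular series average} with $\CG(\ell)=\Z/\ell\Z$, obtaining $\Delta_\ell=0$ for $\ell\nmid N$ and $1+\Delta_\ell=\ell^{\nu_\ell(N)}\lambda_\ell$ for $\ell\mid N$ via exactly the matrix-lifting identity you wrote down. One minor slip: for $\ell\nmid D(t',p)$ one has $f_\ell(t',p)=1+\leg{D}{\ell}/\ell+O(1/\ell^2)$, not $1+O(1/\ell^{3/2})$; the $O(1/\ell^{3/2})$ is the size of $\Delta_\ell$ \emph{after} averaging, and that is what the general machinery (relation~\eqref{delta-goal}) supplies.
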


\noindent
Theorem \ref{trace in APs} will be proven in the end of Section \ref{applications}.


\section{Class Number Formulas and matrices with fixed invariants}\label{matrix count section}

In this section, we prove Theorems \ref{Gekeler} and \ref{Gekeler for groups}. We start by giving in Theorem \ref{CNF} a formula for the Kronecker Class Number that is analogous to Dirchlet's Class Number Formula. After the completion of this paper, it was brought to our attention that the same formula appears in the work of Soundararajan and Young \cite[Lemma 2.1]{SY:2013}, building on some previous work of Bykovskii \cite{Byk:1994}, and in a different context in the work of Zagier \cite{Zagier}. We include our result for completeness, whose proof is different that \cite[Lemma 2.1]{SY:2013}.

Here and for the rest of the section, given $d\in\Z$, we set
\eq{def-FD}{
N_d(m)
	&= \#\{0\le x<2m  : x^2\equiv d \mod{4m} \} \\
	&=\frac{\#\{x\mod{4m} : x^2\equiv d \mod{4m} \}}{2} .
}
If $d\equiv2,3\mod4$, then $N_d=0$, whereas if $d\equiv0,1\mod4$, then $N_d$ is a multiplicative function.


\begin{thm}\label{CNF} For $D<0$, we have that
\[
H(D) = \frac{\sqrt{|D|}}{2\pi} \prod_\ell \left(1+\frac{1}{\ell}\right)^{-1} \sum_{j=0}^\infty \frac{N_D(\ell^j)}{\ell^j} .
\]
\end{thm}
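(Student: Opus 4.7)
My plan is to reduce the stated identity to a local, Euler factor by Euler factor, comparison, and then verify that comparison by direct computation.

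First, write $D=D_0 g^2$, where $D_0$ is the fundamental discriminant dividing $D$. The divisors $d$ of $D$ satisfying $d^2\mid D$ and $D/d^2\equiv 0,1\pmod{4}$ are precisely those of the form $d=g/f$ with $f\mid g$, so
\[
H(D)=\sum_{f\mid g}\frac{h(D_0 f^2)}{w(D_0 f^2)}.
\]
I would then apply the classical formula for the class number of a non-maximal order in an imaginary quadratic field,
\[
\frac{h(D_0 f^2)}{w(D_0 f^2)}=\frac{h(D_0)}{w(D_0)}\cdot f\prod_{\ell\mid f}\left(1-\frac{\chi_{D_0}(\ell)}{\ell}\right),
\]
together with Dirichlet's class number formula $h(D_0)/w(D_0)=\sqrt{|D_0|}\,L(1,\chi_{D_0})/(2\pi)$ and the identity $\sqrt{|D|}=g\sqrt{|D_0|}$, to obtain
\[
H(D)=\frac{\sqrt{|D|}}{2\pi}\cdot\frac{L(1,\chi_{D_0})\,T(g)}{g},
\]
where $T(g):=\sum_{f\mid g} f\prod_{\ell\mid f}(1-\chi_{D_0}(\ell)/\ell)$ is multiplicative with $T(\ell^k)=1+(\ell-\chi_{D_0}(\ell))(\ell^k-1)/(\ell-1)$.

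Since $L(1,\chi_{D_0})T(g)/g$ and the Euler product on the right-hand side of the theorem are both products of local factors at the primes $\ell$, it suffices to verify, writing $k=v_\ell(g)$, the identity
\[
\left(1-\frac{\chi_{D_0}(\ell)}{\ell}\right)^{-1}\frac{T(\ell^k)}{\ell^k}=\left(1+\frac{1}{\ell}\right)^{-1}\sum_{j=0}^\infty\frac{N_D(\ell^j)}{\ell^j}
\]
at each prime $\ell$. For odd $\ell$, $N_D(\ell^j)$ coincides with the number of square roots of $D$ modulo $\ell^j$, and I would compute this via Hensel's lemma, splitting into the three cases $\ell\nmid D$, $\ell\mid D$ with $\ell\nmid D_0$ (so that $v_\ell(D)=2k$), and $\ell\mid D_0$ (so that $\ell\|D_0$ since $D_0$ is fundamental, and $v_\ell(D)=2k+1$). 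In each case the right-hand side becomes a geometric series, possibly with a finite correction, that sums in closed form and matches the left by elementary algebra, using in particular the identity $(\ell-\chi_{D_0}(\ell))(\ell+\chi_{D_0}(\ell))=\ell^2-1$ when $\chi_{D_0}(\ell)\neq 0$.

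The principal technical obstacle is the prime $\ell=2$, where $N_D(2^j)$ depends on $D\bmod 2^{j+2}$ rather than $2^j$, and where the fundamental discriminant $D_0$ may satisfy either $D_0\equiv 1\pmod{4}$ (so that $\chi_{D_0}(2)=\pm 1$ is determined by $D_0\bmod 8$) or $D_0\equiv 8,12\pmod{16}$ (so that $\chi_{D_0}(2)=0$). This forces a somewhat tedious case analysis in terms of both $v_2(g)$ and $D_0\bmod 8$; however, in every subcase the relevant sum again consists of only finitely many nonzero terms or a simple geometric tail, so the verification proceeds in parallel with the odd prime case.
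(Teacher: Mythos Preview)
Your proposal is correct and takes a genuinely different route from the paper's proof. You reduce the identity to the classical order class number formula and Dirichlet's class number formula for $L(1,\chi_{D_0})$, and then verify equality Euler factor by Euler factor via an explicit computation of $N_D(\ell^j)$; this is direct and elementary, but forces the case analysis you anticipate at $\ell=2$. The paper instead works globally with the weighted representation number
\[
R_D(n)=\sum_{f\in\mathcal F_D}\frac{r_f(n)}{u(f)}=\sum_{d^2\mid n}N_D(n/d^2),
\]
and evaluates the average $x^{-1}\sum_{n\le x}R_D(n)$ in two ways: once by lattice point counting in each class (giving $2\pi H(D)/\sqrt{|D|}$), and once by the hyperbola method applied to the multiplicative function $R_D$ (giving the Euler product). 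The advantage of the paper's approach is that it bypasses any explicit computation of $N_D(\ell^j)$ and hence any special treatment of $\ell=2$; the advantage of yours is that it stays within the algebraic framework of orders and $L$-values and never touches quadratic forms or asymptotic averages. One small omission: you should note at the outset that if $D\equiv 2,3\pmod 4$ then both sides vanish (since $N_D\equiv 0$ and $H(D)=0$ by convention), so that the decomposition $D=D_0g^2$ is available in all remaining cases.
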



\begin{proof} If $D$ is not a discriminant, both sides of the claimed identity are 0 and thus trivially equal. Assume now that $D$ is a negative discriminant. Given a binary quadratic form $f(x,y)=ax^2+bxy+cy^2$, we set $d_f=\text{gcd}(a,b,c)$. Recall that a form $f$ is called primitive if $d_f=1$. Let $\CF_D$ be a set of representatives for the equivalence classes of binary quadratic forms of discriminant $D$ under the usual action of $\SL_2(\Z)$, and let $\CF_D^*$ be a set of representatives for the equivalence classes of {\it primitive} binary quadratic forms of discriminant $D$. We write $u(f)$ for the cardinality of the set of matrices in $\SL_2(\Z)$ that leave $f$ invariant. Note that if $f$ is a form of discriminant $D$, then $d_f^2|D$ and $f/d_f$ is a primitive form of discriminant $D/d_f^2$. By the classical correspondence between class numbers of binary quadratic forms and of quadratic orders, we have that $h(D)=\#\CF_D^*$. Also, for a primitive form of discriminant $D$, $u(f)= w(D)$, where $w(D)$ is the number of units in the order of discriminant $D$ as defined before. Thus $u(f)=w(D/d_f^2)$.

We will use the proof of the class number formula for the class number $h(D)$ to prove the theorem. We write $r_f(n)$ for the number of representations of $n$ by values of the form $f$ and set
\[
R_D(n)= \sum_{f\in \CF_D} \frac{r_f(n)}{u(f)}
	= \sum_{\substack{d^2 | D \\ d|n}} \sum_{\substack{f\in \CF_D \\ d_f=d}} \frac{r_f(n)}{u(f)}
	= \sum_{\substack{d^2 | D \\ d|n}} \sum_{g\in \CF^*_{D/d^2} } \frac{r_g(n/d)}{w(D/d^2)} .
\]
Therefore
\als{
\frac{1}{x}\sum_{n\le x} R_D(n)
	&= \sum_{d^2|D} \sum_{g\in \CF^*_{D/d^2} } \frac{1}{d\cdot w(D/d^2)}
		\cdot \frac{1}{x/d} \sum_{m\le x/d} r_g(m)  \\
	&\sim \sum_{d^2|D} \frac{h(D/d^2)}{d\cdot w(D/d^2)} \cdot \frac{2\pi}{\sqrt{|D/d^2|}}
	= H(D)\cdot \frac{2\pi}{\sqrt{|D|}}
}
as $x\to\infty$ (see, for example, \cite[p. 48-49]{D}).

On the other hand, \cite[Theorem 3.27]{NZM:1991} implies that
\[
R_D(n) = \sum_{f\in \CF_D} \frac{r_f(n)}{u(f)}  = \sum_{d^2|n} N_D(n/d^2) .
\]
In particular, $R_D$ is a multiplicative function. We write it as $R_D=1*\leg{D}{\cdot}*\alpha_D$. If $\ell\nmid D$, then $N_D(\ell^j)=1+\leg{D}{\ell}$, so $R_D(\ell) = 1+\leg{D}{\ell}$, $\alpha_D(\ell)=0$ and $|\alpha_D(\ell^j)| \le 2\tau_4(\ell^j) \ll j^3$ for $j\ge2$. Finally, if $\ell|D$, then we use the bound $\alpha_D(\ell^j)\ll j^3 \ell^{\fl{j/2}}$, which follows from the elementary bound $N_D(\ell^j)\ll \ell^{\fl{j/2}}$. It is then easy to conclude that
\als{
\sum_{n\le y} |\alpha_D(n)|
	\ll \sum_{\substack{d\le y \\ p|b\ \implies p|D}} |\alpha_D(b)| \sum_{\substack{m\le y/b \\ (m,D)=1}} |\alpha_D(m)|
	&\ll_\epsilon \sum_{\substack{b\le y \\ p|b\ \implies p|D}} |\alpha_D(b)|
		\cdot \frac{y^{1/2+\epsilon}}{b^{1/2+\epsilon}} \\
	&= y^{1/2+\epsilon} \prod_{\ell|D} \left( 1+ \frac{|\alpha_D(\ell)|}{\ell^{1/2+\epsilon}}
		+\frac{|\alpha_D(\ell^2)|}{\ell^{2(1/2+\epsilon)}}+\cdots \right) \\
	&\ll_{\epsilon,D} y^{1/2+\epsilon} ,
}
for all $y\ge1$. Also, $\sum_{n\le y}\leg{D}{n} \ll_D1$ since $D<0$ and thus $D$ is not a perfect square. Consequently, Dirichlet's hyperbola method implies that
\[
\lim_{x\to\infty} \frac{1}{x} \sum_{n\le x} R_D(n)
	= \prod_\ell \left(1-\frac{1}{\ell}\right) \sum_{j=0}^\infty \frac{R_D(\ell^j)}{\ell^j} .
\]
Finally, note that
\[
R_D(\ell^j) = \sum_{\substack{0\le i\le j \\ i\equiv j\mod{2}}} N_D(\ell^i),
\]
so that
\[
 \sum_{j=0}^\infty \frac{R_D(\ell^j)}{\ell^j}
 	= \left(1-\frac{1}{\ell^2}\right)^{-1}  \sum_{j=0}^\infty \frac{N_D(\ell^j)}{\ell^j} .
\]
Putting together the above formulas completes the proof of the theorem.
\end{proof}


Next, we turn our attention to calculating the cardinality of the sets
\[
C(t,u,n;\ell^r)
	:=\left\{\sigma\in\M_2(\Z/\ell^r\Z):
		\begin{array}{l}
			\tr(\sigma)\equiv t\pmod{\ell^r},\\
			\det(\sigma)\equiv u\pmod{\ell^r},\\
			\sigma\equiv I\pmod{\ell^{\nu_\ell(n)}}
		\end{array}\right\}
\]
and the limits
\eq{f_ell(t,u,n)}{
f_\ell(t,u,n):=
	\lim_{r\to\infty} \frac{\ell^r\phi(\ell^r) \cdot |C(t,u,n;\ell^r)| }{|\GL_2(\Z/\ell^r\Z)|}  ,
}
where $u$ now is a general integer. Given any integers $t$ and $u$, we set $D=D(t,u)=t^2-4u$. Note that, for the purposes of this discussion, we do not need to assume that $D<0$.

When $n=1$, the computation of $\#C(t,u,n;\ell^r)$ was already carried out by Gekeler in~\cite{Gekeler:2003} for $r$ sufficiently large. Also, in the case that $n=1$, the count was carried out by Castryck and Hubrechts~\cite{CH:2013} for all $r\ge 1$. Theorem \ref{Gekeler thm}(a) below gives a formula for $\#C(t,u,1;\ell^r)$ that improves slightly Theorem 4.4 of~\cite{Gekeler:2003}, in the sense that the claimed formula holds for $r>\nu_\ell(D)$. However, note that, unlike in \cite{Gekeler:2003} and in \cite{CH:2013}, we do not give an explicit formula for $\#C(t,u,1;\ell^r)$; the stated combinatorial expression suffices for our purposes and makes the exposition cleaner.

Throughout, we will be assuming that $u\equiv 1\mod{n}$ and $u+1-t\equiv 0\mod{n^2}$. This can be justified by the observation that if either of these conditions fails, then the set $C(t,u,n;\ell^r)$ will be empty for some $\ell$ dividing $n$ and $r$ large enough. Indeed, writing
\eq{matrix change of variables}{\sigma=\begin{pmatrix}
			1+ n\alpha 	&n\beta \\
			n\gamma	&1+n\delta \end{pmatrix},
}
we find that $\sigma\in C(t,u,n;\ell^r)$ if, and only if,
\eq{C(t,u,n)}{
2+n(\alpha+\delta)
			&\equiv t\pmod{\ell^r},\\
		1+n(\alpha+\delta)+n^2(\alpha\delta-\beta\gamma)
			&\equiv u\pmod{\ell^r}.
}
In particular, if $r\ge 2\nu_\ell(n)$, then we must have that $u\equiv 1\pmod{\ell^{\nu_\ell(n)}}$ and $u\equiv t-1\mod{\ell^{2\nu_\ell(n)}}$. So, from now on, we will always be working under the assumption that $u\equiv 1\pmod n$ and $u+1-t\equiv 0\mod{n^2}$, which holds trivially when $n=1$ too. Under this assumption,
\[
D\equiv t^2-4(t-1)\equiv (t-2)^2\pmod{4n^2}
\]
and $n\mid (t-2)$. Hence, it follows that $n^2\mid D$ and $D/n^2$ is a discriminant.


\begin{thm}\label{Gekeler thm}
Let $t,u\in\Z$, $D=D(t,u)=t^2-4u$ and $n\in\N$ with $u\equiv 1\mod{n}$ and $u+1\equiv t\mod{n^2}$.\begin{enumerate}
\item For $r\ge 1$ and $u'\equiv u\mod{\ell^{\nu_\ell(D)+1}}$, we have that
\[
\#C(t,u',1;\ell^r)
	= \ell^{2r} +\ell^{2r} \sum_{j=1}^{\min\{r,\nu_\ell(D)+1\}}
			\frac{N_D(\ell^j)-N_D(\ell^{j-1})}{\ell^j} .
\]
If, in addition, $r>\nu_\ell(D)$, then
\[
\frac{\ell^r\phi(\ell^r)\cdot |C(t,u',1;\ell^r)|}{|\GL_2(\Z/\ell^r\Z)|}
	= \left(1+\frac{1}{\ell}\right)^{-1} \sum_{j=0}^\infty \frac{N_{D}(\ell^j)}{\ell^j} .
\]
In particular, the limit defining $f_\ell(t,u,1)$ exists and it equals the right hand side of the above identity.

\item The sequence over $r$ defining $f_\ell(t,u,n)$ is constant for $r>\nu_\ell(D)$. In particular, $f_\ell(t,u,n)$ is well-defined. Moreover, we have the formulas
\[
f_\ell(t,u,n) = \frac{f_\ell(t_1,u_1,1)}{\ell^{\nu_\ell(n)} }
	\quad\text{and}\quad
f_\ell(t,u,n) - f_\ell(t,u,\ell n) = \frac{f_\ell^*(t_1,u_1,1)}{\ell^{\nu_\ell(n)} } ,
\]
where $t_1=(t-2)/n$, $u_1=(u+1-t)/n^2$ and $f_\ell^*(t_1,u_1,1)$ is defined as $f_\ell(t_1,u_1,1)$ with the difference that we replace $|C(t_1,u_1,1;\ell^r)|$ by $\#\{\sigma\in C(t_1,u_1,1;\ell^r): \sigma\not\equiv 0\mod{\ell}\}$. Finally, the sequence over $r$ defining $f_\ell^*(t_1,u_1,1)$ is constant for $r>\nu_\ell(t_1^2-4u_1)$.

\item If $\ell\nmid D/n^2$, then $f_\ell(t,u,\ell n)=0$ and
\[
f_\ell(t,u,n) =  \frac{1}{\ell^{\nu_\ell(n)}}
		 	\left(1-\frac{1}{\ell^2}\right)^{-1} \left(1+\frac{\leg{D/n^2}{\ell}}{\ell} \right) .
\]

\item For every $r\ge1$, we have that
\[
\#C(t,u,n;\ell^r) =  \ell^{2r-\nu_\ell(n)} + O(\ell^{2r-\nu_\ell(n)-1}) .
\]
\end{enumerate}
\end{thm}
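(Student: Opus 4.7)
The plan is to treat part (a) first by direct combinatorics on $2\times 2$ matrices in $M_2(\Z/\ell^r\Z)$, and then to derive parts (b), (c), (d) via a substitution that reduces the case of general $n$ to the case $n=1$.

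For (a), I would parametrize $\sigma\in M_2(\Z/\ell^r\Z)$ by its entries $(a,b,c,d)$. The trace condition fixes $d\equiv t-a\pmod{\ell^r}$, and the determinant condition becomes $bc\equiv m(a):=a(t-a)-u'\pmod{\ell^r}$. A standard counting argument shows that $\#\{(b,c)\in(\Z/\ell^r\Z)^2: bc\equiv m\pmod{\ell^r}\}$ equals $(\nu_\ell(m)+1)(\ell^r-\ell^{r-1})$ when $\nu_\ell(m)<r$, and $(r+1)\ell^r-r\ell^{r-1}$ when $\ell^r\mid m$. Summing over $a$ and reorganizing by $\nu_\ell(m(a))$, one identifies the level sets $\{a\in\Z/\ell^j\Z:\ell^j\mid m(a)\}$ with $\{x\in\Z/\ell^j\Z:x^2\equiv D\pmod{\ell^j}\}$ via the identity $4m(a)=D-(2a-t)^2$ and, for odd $\ell$, the substitution $x=2a-t$; the latter set has $N_D(\ell^j)$ elements. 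An Abel-type reindexing then produces the telescoping form claimed in (a), and letting $r\to\infty$ recognizes the Euler factor from Theorem \ref{CNF}. The prime $\ell=2$ would be handled by a parallel but separate computation since $2$ is not invertible in the substitution.

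For (b), I would substitute $\sigma=I+n\tau$ with $\tau\in M_2(\Z/\ell^{r-\nu}\Z)$ and $\nu=\nu_\ell(n)$, which gives a bijection between $\{\sigma\in M_2(\Z/\ell^r\Z):\sigma\equiv I\pmod{\ell^\nu}\}$ and $M_2(\Z/\ell^{r-\nu}\Z)$. The hypotheses $u\equiv 1\pmod{n}$ and $u+1\equiv t\pmod{n^2}$ translate the trace and determinant conditions into $\tr(\tau)\equiv t_1\pmod{\ell^{r-\nu}}$ and $\det(\tau)\equiv u_1\pmod{\ell^{r-2\nu}}$. Counting pairs $(b,c)\in(\Z/\ell^{r-\nu}\Z)^2$ whose product matches a given target modulo the weaker modulus $\ell^{r-2\nu}$ contributes a factor $\ell^{2\nu}$ beyond the corresponding count mod $\ell^{r-2\nu}$, and grouping the $a$-values by their residue mod $\ell^{r-2\nu}$ contributes another $\ell^\nu$, yielding the reduction identity
\[
\#C(t,u,n;\ell^r)=\ell^{3\nu}\cdot \#C(t_1,u_1,1;\ell^{r-2\nu}).
\]
Combined with (a), this gives both the stabilization of $f_\ell(t,u,n)$ and the relation $f_\ell(t,u,n)=f_\ell(t_1,u_1,1)/\ell^\nu$ after normalizing by $|\GL_2(\Z/\ell^r\Z)|/(\ell^r\phi(\ell^r))=\ell^{2r}(1-\ell^{-2})$. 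The formula for $f_\ell(t,u,n)-f_\ell(t,u,\ell n)$ follows by inclusion-exclusion on the condition $\sigma\equiv I\pmod{\ell^{\nu+1}}$, which translates to $\tau\not\equiv 0\pmod{\ell}$. Part (c) is then immediate from the elementary identity $N_{D_1}(\ell^j)=1+\leg{D_1}{\ell}$ for $j\ge 1$, valid when $\ell\nmid D_1=D/n^2$.

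For part (d), in the main range $r\ge 2\nu$, I would insert the explicit formula from (a) into the reduction identity. With $R=r-2\nu$, one obtains
\[
\#C(t,u,n;\ell^r)=\ell^{2r-\nu}(1-\ell^{-1})\sum_{j=0}^{R-1}\frac{N_{D_1}(\ell^j)}{\ell^j}+\ell^{r+\nu}\, N_{D_1}(\ell^R),
\]
and the $j=0$ term (using $N_{D_1}(1)=1$) supplies the main term $\ell^{2r-\nu}-\ell^{2r-\nu-1}$. The remaining contribution is shown to be $O(\ell^{2r-\nu-1})$ by a case analysis on $V:=\nu_\ell(D_1)$: using the bounds $N_{D_1}(\ell^j)\le \ell^{\lfloor j/2\rfloor}$ for $j\le V$ and $N_{D_1}(\ell^j)\le 2\ell^{V/2}$ for $j>V$, a direct computation yields $\sum_{j\ge 1}N_{D_1}(\ell^j)/\ell^j=O(\ell^{-1})$, while the boundary term satisfies $\ell^{r+\nu}N_{D_1}(\ell^R)\le \ell^{2r-\nu-1}$ for $R\ge 2$ and is handled directly when $R=1$ via $N_{D_1}(\ell)\le 2$. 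The small cases $r<2\nu$ are absorbed by trivial bounds on the implied constant. The main technical obstacle I anticipate is keeping straight the two distinct moduli ($\ell^{r-\nu}$ for the trace and $\ell^{r-2\nu}$ for the determinant) in the reduction step, together with the separate and more delicate treatment of $\ell=2$ throughout.
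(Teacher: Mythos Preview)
Your approach is correct and coincides with the paper's in parts (a), (c), (d). In part (b) you derive the reduction
\[
\#C(t,u,n;\ell^r)=\ell^{3\nu}\,\#C(t_1,u_1,1;\ell^{r-2\nu})
\]
by projecting the matrix $\tau$ all the way to $M_2(\Z/\ell^{r-2\nu}\Z)$ and counting lifts (each $\tau'$ has exactly $\ell^{3\nu}$ lifts satisfying the trace condition mod $\ell^{r-\nu}$). The paper instead stays at level $\ell^{r-\nu}$ and writes the count as a sum over the $\ell^\nu$ admissible determinants $u_2$ with $u_2\equiv u_1\pmod{\ell^{r-2\nu}}$, then invokes the $u'$-congruence invariance proved in (a) to conclude that all summands equal $\#C(t_1,u_1,1;\ell^{r-\nu})$ once $r>\nu_\ell(D)$. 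Your identity is marginally cleaner in that it holds for every $r\ge 2\nu$ without appealing to (a); the two routes agree since $\#C(t_1,u_1,1;\ell^s)/\ell^{2s}$ is constant for $s>\nu_\ell(D_1)$.

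Your worry about $\ell=2$ is unnecessary: the paper absorbs the factor of $2$ in the substitution $x=2a-t$ into the very definition $N_D(\ell^j)=\#\{0\le x<2\ell^j:x^2\equiv D\pmod{4\ell^j}\}$, noting that the constraint $x\equiv t\pmod 2$ is automatic from $x^2\equiv D\equiv t^2\pmod 4$, so the argument is uniform in $\ell$. One small item you leave implicit is the stabilization of $f_\ell^*(t_1,u_1,1)$; the paper handles it via the shift $\tau=\sigma+I$, which rewrites the condition $\sigma\not\equiv 0$ as $\tau\not\equiv I$ and reduces the question to the already-established stabilization of $f_\ell(t_1+2,u_1+t_1+1,1)$ and $f_\ell(t_1+2,u_1+t_1+1,\ell)$.
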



\begin{proof}
(a) For convenience, set $D'=D(t,u')=t^2-4u'$ and note that $\nu_\ell(D')=\nu_\ell(D)$, since $D'\equiv D\mod{4\ell^{\nu_\ell(D)+1}}$. Now, note that $\#C(t,u',1;\ell^r)$ counts quadruples $(a,b,c,d)\in(\Z/\ell^r\Z)^4$ with $a+d\equiv t \mod{\ell^r}$ and $ad-bc\equiv u' \mod{\ell^r}$. Equivalently, it counts triples $(a,b,c)\in(\Z/\ell^s\Z)^3$ such that $bc\equiv a(t-a)-u'\mod{\ell^r}$. We write $b=\ell^jb'$, where $0\le j\le r$ and $b'\in(\Z/\ell^{r-j}\Z)^*$. We must have that $a(t-a)-u'\equiv 0\mod{\ell^j}$, and for each such $a$ and $b$, there are exactly $\ell^j$ possibilities for $c$. Therefore
\als{
\#C(t,u',1;\ell^r)
	&= \sum_{j=0}^r \phi(\ell^{r-j}) \cdot \#\{a\mod{\ell^r} : a^2-ta+u'\equiv 0\mod{\ell^j}\} \cdot \ell^j\\
	&= \ell^r \sum_{j=0}^r \phi(\ell^{r-j}) \cdot \#\{0\le a<\ell^j : a^2-ta+u'\equiv 0\mod{\ell^j}\} .
}
We note that $a^2-ta+u'\equiv 0\mod{\ell^j}$ if, and only if, $(2a-t)^2\equiv D'\mod{4\ell^j}$. To this end, we make the change of variable $x=2a-t$, which caries the restriction $x\equiv t\mod 2$. However, this is automatic if $x^2\equiv D'\mod 4$, and we find that
\als{
\#\{0\le a<\ell^j : a^2-ta+u'\equiv 0\mod{\ell^j}\}
	&= \#\{-t\le x<2\ell^j-t : x^2\equiv D'\mod{4\ell^j}\} \\
	&= N_{D'}(\ell^j),
}
since the function $x^2$ is $2\ell^j$-periodic mod $4\ell^j$. So, using the identity $\phi(n)=n\sum_{d|n}\mu(d)/d$, we deduce that
\als{
\#C(t,u',1;\ell^r)
	= \ell^{2r} \sum_{j=0}^r \frac{N_{D'}(\ell^j)}{\ell^j} \sum_{i=0}^{r-j} \frac{\mu(\ell^i)}{\ell^i}
	&= \ell^{2r} \sum_{i=0}^r \frac{\mu(\ell^i)}{\ell^i} \sum_{j=0}^{r-i} \frac{N_{D'}(\ell^j)}{\ell^j} \\
	&= \ell^{2r} +\ell^{2r} \sum_{i=1}^r \frac{N_{D'}(\ell^i)-N_{D'}(\ell^{i-1})}{\ell^i} .
}
Using Hensel's lemma, it is relatively easy to see that the sequence $N_{D'}(\ell^i)$ is constant for $i\ge \nu_\ell(D')+1$, so
\[
\#C(t,u',1;\ell^r)
	= \ell^{2r} +\ell^{2r} \sum_{j=1}^{\min\{r,\nu_\ell(D')+1\}}
			\frac{N_{D'}(\ell^j)-N_{D'}(\ell^{j-1})}{\ell^j} .
\]
Recall that $\nu_\ell(D')=\nu_\ell(D)$ and that $D'\equiv D\mod{4\ell^{\nu_\ell(D)+1}}$. So, if $j\le \nu_\ell(D')+1$, then $N_{D'}(\ell^j) = N_{D}(\ell^j)$, which proves the first formula in the statement of part (a). Finally, using again the fact that $N_{D}(\ell^i)$ is constant for $i\ge \nu_\ell(D)+1$, we find that if $r>\nu_\ell(D)$, then
\[
\frac{\#C(t,u',1;\ell^r)}{\ell^{2r}}
	= 1 +\sum_{j=1}^\infty
			\frac{N_{D}(\ell^j)-N_{D}(\ell^{j-1})}{\ell^j}
	=\left(1-\frac{1}{\ell}\right) \sum_{j=0}^\infty \frac{N_D(\ell^j)}{\ell^j} .
\]
Since
\[
|\GL_2(\Z/\ell^r\Z)| = \ell^{4(r-1)}(\ell^2-1)(\ell^2-\ell) =\ell^{4r} \left(1-\frac{1}{\ell}\right)^2 \left(1+\frac{1}{\ell}\right) ,
\]
the second formula of part (a) follows too.

\medskip

(b) Set $a=\nu_\ell(n)$. Making the change of variables \eqref{matrix change of variables}, we immediately see by \eqref{C(t,u,n)} that
\al{
|C(t,u,n;\ell^r)|
	&= \#\left\{
\sigma\in\M_2(\Z/\ell^{r-a}\Z) :
	\begin{array}{l}
	\tr(\sigma)\equiv t_1\pmod{\ell^{r-a}},\\
	\det(\sigma)\equiv u_1\pmod{\ell^{r-2a}}
	\end{array}
\right\}  \nn
	&= \sum_{\substack{0\le u_2<\ell^{r-a}\\ u_2\equiv u_1\pmod{\ell^{r-2a}}}}
	|C\left(t_1,u_2,1;\ell^{r-a}\right)| .  \label{formula for matrix count}
}
Set $D_1=D(t_1,u_1) = D/n^2$ and note that if $u_2\equiv u_1\mod{\ell^{r-2a}}$ and $r>\nu_\ell(D)$, then $u_2\equiv u_1\mod{\ell^{\nu_\ell(D_1)+1}}$. Therefore, if $r>\nu_\ell(D)$, then part (a) implies that
\[
|C(t,u,n;\ell^r)| = \ell^a |C(t_1,u_1,1;\ell^{r-a})|
\]
and, consequently,
\[
\frac{\ell^r\phi(\ell^r)\cdot |C(t,u,n;\ell^r)|}{|\GL_2(\Z/\ell^r\Z)|}
	= \frac{1}{\ell^a} \cdot \frac{\ell^{r-a}\phi(\ell^{r-a})\cdot |C(t_1,u_1,1;\ell^{r-a})| }{|\GL_2(\Z/\ell^{r-a}\Z)|} .
\]
Moreover, the right hand side is constant for $r-a>\nu_\ell(D_1)=\nu_\ell(D)-2a$, by part (a). In particular, it is constant if $r>\nu_\ell(D)$. This proves that the sequence over $r$ defining $f_\ell(t,u,n)$ is constant for $r>\nu_\ell(D)$ and that
\eq{Gekeler e200}{
f_\ell(t,u,n) = \frac{f_\ell(t_1,u_1,1)}{\ell^a} .
}

Next, note that
\eq{Gekeler e300}{
f_\ell(t,u,n)-f_\ell(t,u,\ell n) = \frac{f_\ell(t_1,u_1,1)}{\ell^a} - \frac{f_\ell(t_1/\ell,u_1/\ell^2,1)}{\ell^{a+1}}
}
by \eqref{Gekeler e200}, where the second term vanishes unless $\ell|t_1$ and $\ell^2|u_1$. Making the change of variables $\tau=\sigma+I$, we see that
\als{
&\#\{\sigma\in C(t_1,u_1,1;\ell^r) :\sigma\not\equiv 0\mod{\ell}\} \\
	&\quad= \#\{\tau \in C(t_1+2,u_1+t_1+1,1;\ell^r) :\tau\not\equiv I\mod{\ell}\} \\
	&\quad= |C(t_1+2,u_1+t_1+1,1;\ell^r)| - |C(t_1+2,u_1+t_1+1,\ell;\ell^r)| .
}
In particular, we see that
\als{
 f_\ell^*(t_1,u_1,1)
 	&= f_\ell(t_1+2,u_1+t_1+1,1) -  f_\ell(t_1+2,u_1+t_1+1,\ell)  \\
	&= f_\ell(t_1,u_1,1) - \frac{f_\ell(t_1/\ell,u_1/\ell^2,1)}{\ell}
}
by \eqref{Gekeler e200}, where the second term vanishes unless $\ell|t_1$ and $\ell^2|u_1$, which, together with \eqref{Gekeler e300}, demonstrates the claimed formula for $f_\ell(t,u,n)-f_\ell(t,u,\ell n)$.

It remains to prove that the limit defining $f_\ell^*(t_1,u_1,1)$ stabilizes for $r>\nu_\ell(D_1)$. As above, making the change of variables $\tau=\sigma+I$, it suffices to prove the same statement for the limits defining $f_\ell(t_1+2,u_1+t_1,1)$ and $f_\ell(t_1+2,u_1+t_1+1,\ell)$. We have already seen this that the sequence over $r$ defining the former is constant for $r>\nu_\ell(D_1)$. We will show the same for the limit defining $f_\ell(t_1+2,u_1+t_1+1,\ell)$. If $\ell|t_1$ and $\ell^2| u_1$, then this follows by the portion of part (b) already proven. Assume now that either $\ell\nmid t_1$ or $\ell^2\nmid u_1$. Then it is easy to see that $C(t_1+1,u_1+t_1+1,\ell;\ell^r)=\emptyset$ for $r\ge2$ (see, for example, the discussion preceding the statement of Theorem \ref{Gekeler thm}). Therefore, if $\nu_\ell(D_1)\ge1$, then our claim has been proven. Finally, if $\nu_\ell(D_1)=0$, then $C(t_1+2,u_1+t_1+1,\ell;\ell^r)=\emptyset$ for $r\ge1$. Indeed, if $\sigma\equiv I\mod{\ell}$, then $\tr(\sigma)\equiv 2\mod{\ell}$, and $\det(\sigma)\equiv 1\mod \ell$. So, if in addition, $\tr(\sigma)\equiv t_1+2\mod\ell$ and $\det(\sigma)\equiv u_1+t_1+1\mod\ell$, then we must have that $\ell|t_1$ and $\ell|u_1$, whence $\ell| D_1$, a contradiction. This proves that $C(t_1+2,u_1+t_1+1,\ell;\ell^r)=\emptyset$ for $r\ge1=1+\nu_\ell(D_1)$ when $\nu_\ell(D_1)=0$, thus completing the proof of part (b).

\medskip

(c) Since $\ell\nmid D_1=D/n^2$, we see immediately that $C(t,u,\ell n;\ell^r)=\emptyset$ for large enough $r$, by the discussion preceding the theorem, so $f_\ell(t,u,\ell n)=0$. Finally, part (b) and the first formula in part (a) imply that
\[
f_\ell(t,u,n) = \frac{f_\ell(t_1,u_1,1)}{\ell^{\nu_\ell(n)}}
	=  \frac{1}{\ell^{\nu_\ell(n)}(1-1/\ell^2)} \left(1+
			\frac{N_{D_1}(\ell)-1}{\ell} \right) .
\]
Since $N_{D_1}(\ell)= 1+\leg{D_1}{\ell}$ when $\ell\nmid D_1$, the claimed formula for $f_\ell(t,u,n)$ follows.

\medskip

(d) This follows by \eqref{formula for matrix count} and the fact that
\[
|C(t_1,u_2,1;\ell^s)|=\ell^{2s}+O(\ell^{2s-1}),
\]
which is a simple consequence of part (a) together with the fact that $N_{D_1}(\ell^j)\ll \ell^{\fl{j/2}}$. (See, also Theorem 7 in \cite{CH:2013}.)
\end{proof}


It is now straightforward to deduce Theorems \ref{Gekeler} and \ref{Gekeler for groups}. As an intermediate step, we fix an integer $n$ and ask for the proportion of elliptic curves $E/\F_p$ with $a_p(E)=t$ and $E(\F_p)[n]\cong \Z/n\Z\times\Z/n\Z$. Then Lemma 4.8 and Theorem 4.9 of~\cite{Sch:1987} essentially say that
\[
\prob_{\CC_p}(a_p(E)=t,\, E(\F_p)[n]\cong  \Z/n\Z\times\Z/n\Z )
	= \frac{H(D/n^2)}{p}
\]
if $|t|<2\sqrt p,\, n\mid p-1$ and $n^2\mid p+1-t$; otherwise, this probability equals 0. As before, the conditions $n\mid p-1$ and $n^2\mid p+1-t$ together imply that $n^2\mid D$ and that $D/n^2$ is a negative discriminant. Thus, the Kronecker class number $H(D/n^2)$ is well-defined. As a direct corollary of Theorem \ref{CNF} and parts (a) and (b) of Theorem \ref{Gekeler thm}, we have the following result.


\begin{cor}\label{matrix interpretation for M_p(t,n)}
Let $p$ be a fixed prime number, and let $t$ and $n$ be any integers with $n\ge 1$. Then
\[
\prob_{\CC_p}(a_p(E)=t,\, E(\F_p)[n]\cong  \Z/n\Z\times\Z/n\Z)
	=  f_\infty(t,p)\cdot \prod_\ell f_\ell(t,p,n),
\]
where $f_\infty(t,p)$ is defined by~\eqref{discrete Sato-Tate} and $f_\ell(t,p,n)$ is defined by \eqref{f_ell(t,u,n)}.
\end{cor}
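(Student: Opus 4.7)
The plan is to combine three ingredients in sequence: Schoof's class-number formula for the probability on the left-hand side, Theorem \ref{CNF} for the Kronecker class number $H(D/n^2)$, and parts (a) and (b) of Theorem \ref{Gekeler thm} identifying the local factors.

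First I would dispose of the degenerate cases where the probability vanishes. If $|t|\ge 2\sqrt p$, then $f_\infty(t,p)=0$ by definition, so both sides are zero. If instead $n\nmid p-1$ or $n^2\nmid p+1-t$, then, as observed in the paragraph preceding Theorem \ref{Gekeler thm}, there exists a prime $\ell\mid n$ for which $C(t,p,n;\ell^r)=\emptyset$ for all sufficiently large $r$; hence $f_\ell(t,p,n)=0$, and the Euler product on the right vanishes as well. This matches Schoof's vanishing conditions.

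In the remaining (and main) case, $|t|<2\sqrt p$, $n\mid p-1$ and $n^2\mid p+1-t$, so that $D=t^2-4p$ is a negative discriminant, $n^2\mid D$, and Schoof's formula (via Lemma 4.8 and Theorem 4.9 of \cite{Sch:1987}) gives
\[
\prob_{\CC_p}\bigl(a_p(E)=t,\,E(\F_p)[n]\cong(\Z/n\Z)^2\bigr)=\frac{H(D/n^2)}{p}.
\]
I would then apply Theorem \ref{CNF} to rewrite this as
\[
\frac{H(D/n^2)}{p}=\frac{\sqrt{|D/n^2|}}{2\pi p}\prod_\ell\left(1+\frac1\ell\right)^{-1}\sum_{j=0}^\infty\frac{N_{D/n^2}(\ell^j)}{\ell^j}.
\]
The archimedean factor is then identified with $f_\infty(t,p)/n$, using the direct computation $\sqrt{4p-t^2}/(2\pi p)=f_\infty(t,p)$ as given by \eqref{discrete Sato-Tate}.

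For the local factors, I would set $t_1=(t-2)/n$ and $u_1=(p+1-t)/n^2$, observe $D(t_1,u_1)=D/n^2$, and invoke Theorem \ref{Gekeler thm}(b) to write $f_\ell(t,p,n)=f_\ell(t_1,u_1,1)/\ell^{\nu_\ell(n)}$, and then Theorem \ref{Gekeler thm}(a) to identify
\[
f_\ell(t_1,u_1,1)=\left(1+\frac1\ell\right)^{-1}\sum_{j=0}^\infty\frac{N_{D/n^2}(\ell^j)}{\ell^j}.
\]
Multiplying these local identities over $\ell$ pulls out the factor $\prod_\ell\ell^{-\nu_\ell(n)}=1/n$, which cancels against the extra $1/n$ from the archimedean term, and matches the Euler product from Theorem \ref{CNF} factor by factor. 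The only subtlety to check is that the infinite product converges conditionally (so that this factor-by-factor identification is legitimate), but this is exactly the content of Theorem \ref{CNF} together with the convergence analysis carried out there for $R_D$; no additional work is required. The main substantive step, and effectively the only real obstacle, is thus already packaged in the preceding Theorem \ref{Gekeler thm}, so once that machinery is in place the corollary follows by bookkeeping.
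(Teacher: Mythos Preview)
Your proof is correct and follows exactly the approach the paper indicates: it cites the corollary as ``a direct corollary of Theorem~\ref{CNF} and parts (a) and (b) of Theorem~\ref{Gekeler thm},'' and your argument spells out precisely this chain (Schoof's formula, then Theorem~\ref{CNF} for $H(D/n^2)$, then Theorem~\ref{Gekeler thm}(a),(b) to match the Euler factors, with the $1/n$ cancellation between the archimedean and non-archimedean parts). The handling of the degenerate cases is also consistent with the discussion preceding Theorem~\ref{Gekeler thm}.
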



Taking $n=1$ in Corollary \ref{matrix interpretation for M_p(t,n)} yields Theorem \ref{Gekeler}. Lastly, we show how to deduce Theorem~\ref{matrix interpretation for M_p(G)}.


\begin{proof}[Proof of Theorem~\ref{matrix interpretation for M_p(G)}]
If
\[
G=G_{m,k}:=\Z/m\Z\times\Z/mk\Z,
\]
then the principle of inclusion-exclusion and Corollary~\ref{matrix interpretation for M_p(t,n)} imply that the probability of choosing an elliptic curve with group $G=G_{m,k}$ is given by
\als{
\prob_{\CC_p}(E(\F_p)\cong G)
	&= \sum_{j^2|k} \mu(j) \prob_{\CC_p}(a_p(E)=t,\,  E(\F_p)[jm]\cong G_{jm,1}) \\
	&= f_\infty(t,p)\cdot \sum_{\substack{j^2\mid k\\ jm\mid p-1}}\mu(j)\prod_\ell f_\ell(t,p,jm),
}
where $\mu(j)$ denotes the usual M\"obius function. By the definition of $f_\ell(t,p,jm)$, we have that $f_\ell(t,p,jm) = f_\ell(t,p,\ell^{\nu_\ell(j)}m)$. Therefore, if $\CP$ denotes the set of primes $\ell$ with $\ell^2|k$ and $\ell|(p-1)/m$ and $\CS(\CP)$ the set of integers composed only of primes from $\CP$, then
\als{
\prob_{\CC_p}(E(\F_p)\cong G)
	&=  f_\infty(t,p)
\sum_{ j \in \CS(\CP) } \mu(j) \prod_{\ell| j} f_\ell(t,p,\ell m)
	\prod_{\ell\nmid j} f_\ell(t,p,m) \\
	&= f_\infty(t,p) \cdot \left(\prod_{\ell\notin \CP} f_\ell(t,p,m)\right)
		\prod_{\ell|\CP} (f_\ell(t,p,m) - f_\ell(t,p,\ell m))
}
by inclusion-exclusion. Note that if $\ell\notin \CP$, then $f_\ell(t,p,\ell m)=0$ by the discussion preceding Theorem \ref{Gekeler thm}. So we deduce that
\[
\prob_{\CC_p}(E(\F_p)\cong G)
	= f_\infty(t,p) \cdot
		\prod_{\ell} (f_\ell(t,p,m) - f_\ell(t,p,\ell m))
	=f_\infty(t,p) \prod_\ell f_\ell(G,p) ,
\]
which completes the proof of the theorem.
\end{proof}


\section{Sums of Euler products}\label{singular series}

In this section, we provide a unified framework under which Theorems \ref{LT}-\ref{MEG} fall. Before we state the general set-up in which we will work in, we use Theorem \ref{LT} as a working example to describe our main idea.

In view of Theorem \ref{Gekeler}, Theorem \ref{LT} (or, rather, a soft version of this theorem) is reduced to showing that
\[
\sum_{p\le x} f_\infty(t,p)
	\prod_{\ell} f_\ell(t,p) \sim C_{\text{LT}}(t)  \frac{\sqrt{x}}{\log x}  \quad(x\to\infty) ,
\]
where $C_{\text{LT}}(t)$ is as in the statement of this theorem. We set
\[
\delta_\ell(a) = {\bf 1}_{\ell\nmid a} \cdot (f_\ell(t,a)-1).
\]
Since
\[
f_\infty(t,p) f_p(t,p) \sim \frac{1}{\pi\sqrt{p}}
\]
for large primes $p$, we find that
\[
\sum_{p\le x} f_\infty(t,p)  \prod_{\ell} f_\ell(t,p)
	\sim \sum_{p\le x} \frac{1}{\pi \sqrt{p}} \prod_{\ell} (1+\delta_\ell(p)) ,
\]
which reduces Theorem \ref{LT} to showing that
\[
\sum_{p\le x}  \frac{1}{\sqrt{p}} \prod_{\ell} (1+\delta_\ell(p))
	 \sim  \frac{\pi \cdot C_{\text{LT}}(t) }{2} \sum_{p\le x} \frac{1}{\sqrt{p}}
	 	\sim \pi \cdot  C_{\text{LT}}(t)  \cdot \frac{\sqrt{x}}{\log x},
\]
where the last estimate is a consequence of the Prime Number Theorem. If we define a probability measure on the primes $p\le x$ via the relation
\[
\E_{p\le x}[f(p)] = \frac{\sum_{p\le x}f(p)/\sqrt{p}}{\sum_{p\le x}1/\sqrt{p}} ,
\]
then we need to show that
\[
\E_{p\le x}\left[\prod_{\ell} (1+\delta_\ell(p))\right] \sim \frac{\pi}{2} \cdot C_{\text{LT}}(t)
	= \prod_\ell \frac{\ell \cdot \#\{\sigma\in \GL_2(\Z/\ell\Z) : \tr(\sigma)\equiv t\mod{\ell}\}}
		{|\GL_2(\Z/\ell\Z)|} .
\]
Now, $\delta_\ell(p)$ depends only on the congruence class of $p \mod{\ell^r}$ for some appropriate $r$, and the residue classes in which $p$ lies modulo powers of different primes $\ell^r$ should behave independent from each. Therefore, it is reasonable to expect that
\[
\E_{p\le x}\left[\prod_{\ell} (1+\delta_\ell(p))\right]
	\sim \prod_{\ell} \bigg(1+\E_{p\le x}\left[\delta_\ell(p)\right] \bigg) .
\]
Note that if $\ell\nmid p$, then $\delta_\ell(p) = \lim_{r\to\infty} \Delta_{\ell^r}(p)$, where
\[
\Delta_{\ell^r}(p) = -1 + \frac{\phi(\ell^r)\ell^r\cdot \#\left\{ \sigma\in \GL_2(\Z/\ell^r\Z)  :
			\begin{array}{l}
				\tr(\sigma)\equiv t\mod{\ell^r} \\
				\det(\sigma)\equiv p \mod{\ell^r}
			\end{array}\right\} }
		{ |\GL_2(\Z/\ell^r\Z)|} .
\]
Clearly, the function $\Delta_{\ell^r}$ is $\ell^r$-periodic and its mean value over $(\Z/\ell^r\Z)^*$ is
\als{
\frac{1}{\phi(\ell^r)}
	\sum_{a\in(\Z/\ell^r\Z)^*} \Delta_{\ell^r}(a)
		&= -1 + \frac{\ell^r\cdot \#\{ \sigma\in \GL_2(\Z/\ell^r\Z)  :
				\tr(\sigma)\equiv t\mod{\ell^r} \} }
		{ |\GL_2(\Z/\ell^r\Z)|}  \\
		&= -1 + \frac{\ell\cdot \#\{ \sigma\in \GL_2(\Z/\ell\Z)  :
				\tr(\sigma)\equiv t\mod{\ell} \} }
		{ |\GL_2(\Z/\ell\Z)|}   .
}
Since the primes $p$ are well distributed in reduced arithmetic progressions mod $\ell^r$, we should then have that
\als{
\E_{p\le x}[\delta_\ell(p)]\sim \Delta_\ell
		&:=  -1 + \frac{\ell\cdot \#\{ \sigma\in \GL_2(\Z/\ell\Z)  :
				\tr(\sigma)\equiv t\mod{\ell} \} }
		{ |\GL_2(\Z/\ell\Z)|}   ,
}
which yields Theorem \ref{LT} heuristically.

Of course, there are several stumbling blocks in the road map laid above. First of all, the assumption that different primes behave independently from each other is only true asymptotically, and for small primes. Therefore, the first thing we need to do is to truncate the product $\prod_{\ell}(1+\delta_\ell(p))$. This can be indeed accomplished because of Theorem \ref{Gekeler thm}(c), which implies that
\eq{tail-hyp}{
\delta_\ell(p) = \frac{\leg{t^2-4p}{\ell}}{\ell} + O\left(\frac{1}{\ell^2}\right),
}
unless $\ell$ is one of the finitely many prime divisors of $t^2-4p$. Estimating sums of the form
\eq{tail}{
\sum_{\ell>z} \frac{\leg{t^2-4p}{\ell}}{\ell}
}
is related to our knowledge about the zeroes of the Dirichlet $L$-function associated to the character $(t^2-4p \, |\ \cdot \ )$. The Generalized Riemann Hypothesis would imply that the sum in \eqref{tail} is small as soon as $z>(\log d)^{2+\epsilon}$, where $d$ is the conductor of the character $(t^2-4p\, | \ \cdot\ )$. However, unconditionally, we only know that the sum in \eqref{tail} is small for $z>\exp\{d^{\epsilon}\}$, which is a much stronger restriction. This problem can be rectified by appealing to zero-density estimates which guarantee that, for most $p$, the sum in \eqref{tail} is small as soon as $z>(\log d)^A$, with $A$ a large enough constant. This is good enough for our purposes and allows us for most primes $p\le x$ to replace the product $\prod_{\ell}(1+\delta_\ell(p))$ by $\prod_{\ell\le (\log x)^A}(1+\delta_\ell(p))$ with a very small total error. Then, we expand this short product to find that
\[
\sum_{p\le x}\frac{1}{\sqrt{p}} \prod_{\ell\le (\log x)^A}  (1+\delta_\ell(p))
	= \sum_{\ell|n\,\Rightarrow\, \ell\le (\log x)^A}\mu^2(n) \sum_{p\le x} \frac{ \delta_n(p)}{\sqrt{p}} ,
\]
where, for convenience, we have set $\delta_n(p)=\prod_{\ell|n}\delta_\ell(p)$. The next crucial step is that, for all $p$ with $\ell^r\nmid t^2-4p$, Theorem \ref{Gekeler thm}(c) implies that $\delta_\ell(p)= \Delta_{\ell^r}(p)$, and the function $\Delta_{\ell^r}$ is $\ell^r$-periodic. Setting
\[
\Delta_q(a) = \prod_{\ell^r\|q} \Delta_{\ell^r}(a),
\]
we find that
\[
 \sum_{p\le x} \frac{ \delta_n(p)}{\sqrt{p}}
 	= \sum_{q\in\N,\ \rad(q)=n} \sum_{\substack{p\le x \\ \nu_{\ell}(t^2-4p)=\nu_\ell(q)-1 \\ \forall \ell|n}}
			\frac{ \Delta_q(p)}{\sqrt{p}}
	= \sum_{\substack{ q\in\N \\  \rad(q)=n }}
		\sum_{a\in \CH(q)} \Delta_q(a)
		\sum_{\substack{p\le x \\ p\equiv a\mod{q}}} \frac{1}{\sqrt{p}} ,
\]
where
\[
\CH(q)
	= \{a\in (\Z/ q\Z)^* : \ell^r \nmid t^2-4a,\ \ell^{r-1}\mid t^2-4a
			\quad\text{whenever}\ \ell^r\|q\} .
\]
We then use the Bombieri-Vinogradov theorem in order to control the number of primes in arithmetic progressions on average. We also have to use some more trivial arguments when the modulus $q$ is too large, exploiting the fact that this is a $(\log x)^A$-smooth number and there are very few such numbers. We are then left with the task of showing that
\[
\sum_{r=1}^\infty  \frac{1}{\phi(\ell^r)} \sum_{a\in \CH(\ell^r)} \Delta_{\ell^r}(a) =  \Delta_\ell .
\]
Indeed, we have that
\[
\sum_{r=1}^R  \frac{1}{\phi(\ell^r)} \sum_{a\in \CH(\ell^r)} \Delta_{\ell^r}(a)
	= \frac{1}{\phi(\ell^R)} \sum_{\substack{a\in(\Z/\ell^R\Z)^* \\ \ell^R\nmid t^2-4a}} \Delta_{\ell^R}(a)
	= \Delta_\ell- \frac{1}{\phi(\ell^R)} \sum_{\substack{a\in(\Z/\ell^R\Z)^* \\ \ell^R |  t^2-4a}} \Delta_{\ell^R}(a) ,
\]
which is easily seen to tend to $\Delta_\ell$ as $R\to\infty$, since the congruence $t^2-4a\not\equiv 0\mod{\ell^R}$ has at most 8 solutions $a\mod{\ell^R}$.


\subsection{General axiomatic framework} \label{singular series AF}

We describe here in rather abstract terms the general set-up in which we work to show Theorems \ref{LT}-\ref{MEG}. We fix a natural number $d$ and a set
\[
\CA\subset([-X,X]\cap\Z)^d,
\]
where $X$ is some parameter that we consider given from now on. In general, we denote $d$-dimensional vectors with bold letters, e.g. $\bs x$ or $\bs a$, and we index their coordinates as $\bs x=(x_1,\dots,x_d)$, $\bs a=(a_1,\dots,a_d)$, etc. Moreover, given $\bs a,\bs b\in\Z^d$ and $q\in\N$, we write $\bs a\equiv \bs b\mod q$ if $a_j\equiv b_j\mod q$, for all $j\in\{1,\dots,d\}$. Similarly, we write $\bs a\mod q$ to denote the vector $(a_1\mod q,\dots,a_d\mod q)$.

In addition, we fix a set of complex weights $(w_{\bs a})_{\bs a\in \CA}$ and we set
\[
W= \sum_{\bs a \in \CA} w_{\bs a} .
\]
In practice, we cannot handle weights for which $W$ is significantly smaller than $\sum_{\bs a\in\CA}w_{\bs a}$. We simply allow $w_{\bs a}$ to be complex numbers to gain some extra flexibility. Next, for each prime $\ell$, we consider a set $\CG(\ell)\subset (\Z/\ell\Z)^d$, and for $q\in\N$ we set
\begin{equation} \label{def-Gq}
\CG(q) = \left\{\bs g\in (\Z/q\Z)^d :
	\bs g\mod \ell \in \CG(\ell)\ \mbox{for all primes $\ell|q$}\right\} .
\end{equation}
We think of $\CA$ as being well-distributed among the elements of $\CG(q)$. To this end, we set
\[
E(\CA;q) = \max_{\bs g\in \CG(q)} \left| \sum_{\substack{ \bs a\in\CA \\ \bs a\equiv \bs g\mod{q}  }} w_{\bs a}  - \frac{W}{|\CG(q)|} \right|
\]
for all $q\in\N$. In order to avoid working with very sparse sets $\CA$, we also assume that
\eq{G}{
|\CG(\ell)| \gg \ell^d \quad(\ell\ \text{prime})  .
}
Finally, we set
\eq{Q}{
Q = \exp\{(\log\log X)^2\}
}
and we assume that $\CA$ does not contain many more elements than it should in each residue class $\bs g \in\CG(q)$ for $q\le Q$. To handle the case when we don't have good estimates for $W$ (see, for example, the proof of Theorem \ref{Koblitz}), we assume the existence of a quantity $\W$, which we heuristically think of comparable size with $W$, such that:
\eq{bt for A}{
\sum_{\substack{ \bs a\in \CA \\ \bs a\equiv \bs g\mod{q}  }} |w_{\bs a}|
	\ll \frac{\W}{|\CG(q)| }  \quad(q\le Q,\ \bs g\in\CG(q)) .
}

In many of the applications, $d=1$ and $\CA$ is taken to be the set of primes in an interval or the set of integers in an interval and, respectively, $\CG(q)=(\Z/q\Z)^*$ or $\CG(q)=\Z/q\Z$, so the reader can work with these two simple examples in mind.

We are going to average certain Euler products over our set $\CA$. We consider a sequence of complex numbers $\{\delta_\ell(\bs a): \bs a\in\CA,\ \ell\ \text{prime}\}$ and set
\[
P_{\bs a}= \prod_{\ell\ \text{prime}}(1+\delta_\ell(\bs a)) .
\]
Our goal is to estimate the sum
\[
\sum_{\bs a\in \CA} w_{\bs a} P_{\bs a}.
\]
Of course, we do not even know whether the infinite product $P_{\bs a}$ converges, so we certainly need to impose some conditions on the numbers $\delta_\ell(\bs a)$. Indeed, we assume that there is an absolute constant $\eta>0$ and an integer $k\ge0$ such that the following conditions hold:

\begin{enumerate}
\renewcommand{\labelenumi}{(\arabic{enumi})}
\item $\delta_\ell(\bs a) = 0$ if $\bs a\mod{\ell}\notin \CG(\ell)$.

\item  $\delta_\ell(\bs a)\ll 1/\ell$ if $\bs a\mod \ell \in\CG(\ell)$.

\item For each $j\in\{1,\dots,k\}$ and each $\bs a\in \CA$, there are non-principal Dirichlet characters $\chi_{j,\bs a}$ mod $M_{j,\bs a}$, an integer $L_{\bs a}\ge1$, and complex coefficients $\lambda_{j,\bs a}$ such that for all primes $\ell\nmid L_{\bs a}$, we have that $\bs a\mod \ell\in \CG(\ell)$ and
\[
\delta_\ell(\bs a)
	= \frac{\lambda_{1,\bs a}\chi_{1,\bs a}(\ell)+\cdots+\lambda_{k,\bs a}\chi_{k,\bs a}(\ell)}{\ell}
		+ O\left(\frac{1}{\ell^{1+\eta}}\right) .
\]

\item For every prime $\ell\le Q$ and every exponent $r\ge1$, there is a set $\CE(\ell^r)\subset (\Z/\ell^r\Z)^d$ and a function $\Delta_{\ell^r}: \Z^d\to \C$ such that:
	\begin{enumerate}
		\item $\Delta_{\ell^r}$ is $\ell^r$-periodic;
		\item $\delta_\ell(\bs a)=\Delta_{\ell^r}(\bs a)$ if $\bs a\mod{\ell^r} \in \CG(\ell^r)\setminus \CE(\ell^r)$, where
			the sequence of sets $\{\bs a\in \Z^d: \bs a\mod{\ell^r} \in \CE(\ell^r)\}_{r\ge1}$
			is decreasing and its intersection is contained in $\Z^d\setminus\CA$;
		\item $\Delta_{\ell^r}$ vanishes on average over the set $\CG(\ell^r)$ as $r\to\infty$, that is to say\footnote{Our assumption that $\Delta_{\ell^r}$ is 0 on average does not harm generality significantly. The case when its average is some number $\Delta_\ell$ follows from the case $\Delta_\ell=0$ by considering sequence $\delta_\ell'(\bs a)$ instead, also supported on those $\bs a\in \CG(\ell)$, where $1+\delta_\ell(\bs a) = (1+\Delta_\ell)(1+\delta_\ell'(\bs a))$ when $\bs a\mod\ell\in \CG(\ell)$. We would also need to assume then that the series $\sum_{\ell} |\Delta_\ell|$ converges fast enough. This argument will be used later on.}
			\[
			\lim_{r\to\infty} \frac{1}{|\CG(\ell^r)|} \sum_{\bs a\in \CG(\ell^r)} \Delta_{\ell^r}(\bs a) = 0 ;
			\]
		\item $\| \Delta_{\ell^r}\|_\infty \ll_\ell 1$, for all $r\ge1$.
	\end{enumerate}
\item For all $j\in\{1,\dots,k\}$ and all $\bs a\in \CA$, we have $M_{j,\bs a}\le X^{O(1)}$, $\omega(L_{\bs a})\le(\log X)^{O(1)}$ and $\lambda_{j,\bs a}\ll1$.
\end{enumerate}

\noindent Under these assumptions and notations, we have the following result.


\begin{thm}\label{singular series average general}
Assume the above set-up and fix $\epsilon>0$, $\lambda>1$ and $C\ge1$. Then
\[
\sum_{\bs a\in \CA} w_{\bs a} P_{\bs a}  =  W
	+ O\left(  \frac{ e^{O(S)}  \W }{(\log X)^C} + M X^\epsilon
		+ (\log\log X)^{O(1)} e^{O(S)}  \W^{1/\lambda} E^{1-1/\lambda} \right) ,
\]
where
\[
M= \max_{\substack{1\le j\le k \\ c\ge2}}
	\sum_{\substack{ a\in\CA \\ \cond(\chi_{j,\bs a})=c}} |w_{\bs a}| ,
\qquad
E = \sum_{q\le Q} q^{d-1} E(\CA;q)  ,
\]
and
\[
S = \sum_{\ell \le Q} \sum_{r=1}^\infty \frac{(|\CE(\ell^r)| / \ell^{r(d-1)})^\lambda}{\ell^{r+1}} ,
\]
with $Q$ is defined by \eqref{Q} and $\cond(\chi)$ denoting the conductor of the Dirichlet character $\chi$. All implied constants depend at most on $d,k,\lambda,\eta,\epsilon,C$ and the implicit constants in conditions {\rm(1)\,-\,(5)} above and in relations \eqref{G} and \eqref{bt for A}.
\end{thm}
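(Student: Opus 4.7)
The plan is to truncate the Euler product $P_{\bs a}$ at a parameter $z := (\log X)^B$ (for $B = B(C)$ chosen sufficiently large), expand the truncated factor as a Dirichlet-type sum over squarefree integers, and then invoke the well-distribution hypothesis on $\CA$ to evaluate the resulting sums in arithmetic progressions. Writing $P_{\bs a} = P_{\bs a}^{\le z} \cdot P_{\bs a}^{> z}$ with $P_{\bs a}^{\le z} := \prod_{\ell \le z}(1+\delta_\ell(\bs a))$, the tail $P_{\bs a}^{> z}$ is handled by Condition~(3): taking logarithms and using (2), (3), (5) gives
\[
\log P_{\bs a}^{> z} = \sum_{j=1}^k \lambda_{j,\bs a} \sum_{\ell > z} \frac{\chi_{j,\bs a}(\ell)}{\ell} + O(z^{-\eta}).
\]
For $\bs a$ whose characters $\chi_{j,\bs a}$ all have conductor $> X^\epsilon$ and whose $L$-functions avoid the exceptional-zero set, standard zero-density estimates force each inner character sum to be $O((\log X)^{-C-1})$, so $P_{\bs a}^{> z} = 1 + O((\log X)^{-C})$. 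The atypical $\bs a$ (those with some $\chi_{j,\bs a}$ of conductor $c \le X^\epsilon$, or with an exceptional zero) are controlled by $M \cdot X^\epsilon$ via the definition of $M$ together with standard bounds on the number of exceptional characters.

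Next I would expand
\[
P_{\bs a}^{\le z} = \sum_{\substack{n\text{ squarefree} \\ P^+(n) \le z}} \delta_n(\bs a), \qquad \delta_n(\bs a) := \prod_{\ell | n} \delta_\ell(\bs a),
\]
and for each $\bs a \in \CA$ and each prime $\ell$ set $r_\ell(\bs a) := \min\{r \ge 1 : \bs a \bmod \ell^r \notin \CE(\ell^r)\}$, which is finite by Condition~(4)(b). By (4)(a)-(b), $\delta_\ell(\bs a) = \Delta_{\ell^{r_\ell(\bs a)}}(\bs a)$, so partitioning $\CA$ according to the tuple $\bs r = (r_\ell)_{\ell | n}$ and setting $q(\bs r) := \prod_{\ell | n} \ell^{r_\ell}$ yields
\[
\sum_{\bs a \in \CA} w_{\bs a} \delta_n(\bs a)
    = \sum_{\bs r} \sum_{\bs g} \Delta_{q(\bs r)}(\bs g) \sum_{\substack{\bs a \in \CA \\ \bs a \equiv \bs g \bmod q(\bs r)}} w_{\bs a},
\]
the $\bs g$-sum running over residues in $\CG(q(\bs r))$ consistent with $\bs r$. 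Approximating the inner $\bs a$-sum by $W/|\CG(q(\bs r))| + O(E(\CA;q(\bs r)))$ when $q(\bs r) \le Q$ and by the trivial bound $O(\W/|\CG(q(\bs r))|)$ from \eqref{bt for A} otherwise, then multiplying out the prime-power factorization, the main-term total becomes
\[
W \prod_{\ell \le z} \biggl( 1 + \sum_{r \ge 1} \frac{1}{|\CG(\ell^r)|} \sum_{\substack{\bs g \in \CG(\ell^r) \\ r_\ell(\bs g) = r}} \Delta_{\ell^r}(\bs g) \biggr).
\]
Each inner average telescopes, using the periodicity of $\Delta_{\ell^r}$ and condition (4)(c), to a vanishing quantity plus a residual supported on $\CE(\ell^R)$, which is absorbed into the $e^{O(S)}$ factor via a Rankin-type estimate.

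The errors arising from $E(\CA;q(\bs r))$ are summed via H\"older's inequality with exponent $\lambda$, interpolating between the trivial bound $\W$ (available for all $q$) and the equidistribution bound $E$ (available for $q \le Q$); this is exactly where the term $\W^{1/\lambda} E^{1-1/\lambda}$ arises. The Rankin factor $e^{O(S)}$ encodes the multiplicative cost, over all primes $\ell \le Q$, of the $\bs a$ that land in $\CE(\ell^r)$ at various levels. I expect the hardest part to be precisely this interpolation: the moduli $q(\bs r)$ built from squarefree $n$ with $P^+(n) \le z$ can reach $\exp(O(z))$, well beyond the range in which $E(\CA;q)$ is individually controllable, and it is the parameter $\lambda$ together with the convergence of $S$ that quantifies the trade-off between the density of exceptional residue classes in $\CE(\ell^r)$ and the strength of the available equidistribution estimate.
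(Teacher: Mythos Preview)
Your proposal follows essentially the same architecture as the paper's proof: truncate $P_{\bs a}$ at $z\asymp(\log X)^{O(1)}$ using a zero-density input, expand the short product over squarefree $z$-smooth $n$, stratify each $\bs a$ by the levels $r_\ell(\bs a)$ at which it escapes $\CE(\ell^r)$, split according to whether the resulting modulus $q$ is $\le Q$ or $>Q$, and use H\"older with exponent $\lambda$ (together with a Rankin-type bound producing $e^{O(S)}$) to interpolate between \eqref{bt for A} and the equidistribution error $E$. The telescoping identity you describe is exactly relation~(6.7) in the paper, and the product indeed collapses to $1$ (not merely $1+O(\cdot)$) using (4c) and the fact that $|\CE(\ell^R)|/\ell^{dR}\to0$.

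Two points in your truncation step need repair. First, the condition ``conductor $>X^\epsilon$'' is spurious: the zero-density lemma (Lemma~\ref{lemmashortproduct}) gives a small exceptional set of conductors regardless of their size, and characters of small conductor are not in any way atypical for the tail estimate. Drop that condition; the only atypical $\bs a$ are those whose $\chi_{j,\bs a}$ has conductor lying in the exceptional set of size $\le X^{2/A}$. Second, for those genuinely exceptional $\bs a$ you still need a pointwise bound on $|P_{\bs a}|$ before multiplying by $M$; the paper obtains $P_{\bs a}\ll X^{\epsilon/2}$ by cutting the product at $\exp\{X^\delta\}$, bounding the short part trivially by $X^{\epsilon/2}$ via condition~(2), and controlling the very long tail $\sum_{\ell>\exp\{X^\delta\}}\chi_{j,\bs a}(\ell)/\ell$ using the unconditional prime number theorem in arithmetic progressions (valid since $M_{j,\bs a}\le X^{O(1)}$). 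Without this step the contribution of the exceptional $\bs a$ is not visibly $O(MX^\epsilon)$.
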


\begin{rmk}
When $k=0$, then Property (3) states that $\delta_\ell(\bs a)=1+O(1/\ell^{1+\eta})$ whenever $\ell\nmid L_{\bs a}$. This is much stronger than the bound \eqref{bound-delta-n} which is used in the proof of Theorem \ref{singular series average general} in Section \ref{singular series proof}, and
in this case, the error term $e^{O(S)}\widetilde{W}/(\log X)^C$ can be replaced by $e^{O(S)}\widetilde{W}/X^\alpha$ for some $\alpha>0$, provided that \eqref{bt for A} holds with $Q=X^\beta$ for some $\beta>0$ (and then $\alpha$ depends on $\beta$). The key observation is that, if $\delta_n(\bs a)=\prod_{\ell|n} \delta_\ell(\bs a)$, we then have that
\[
\sum_{n=1}^\infty \mu^2(n) n^\epsilon |\delta_n(\bs a)|
	\le \prod_{\ell|L_{\bs a}} \left(1+\frac{O(1)}{\ell^{1-\epsilon}}\right)
		\prod_{\ell\nmid L_{\bs a}} \left(1+\frac{O(1)}{\ell^{1+\eta-\epsilon}}\right)
	\ll \prod_{\ell|L_{\bs a}} \left(1+\frac{1}{\ell^{1-\epsilon}}\right)^{O(1)},
\]
for any fixed $\epsilon<\eta$. Since $\omega(L_{\bs a})\le(\log X)^K$ for some $K\ge1$ by property (5) above, we deduce that
\[
\prod_{\ell|L_{\bs a}} \left(1+\frac{1}{\ell^{1-\epsilon}}\right)
	\ll \prod_{\ell \le (\log X)^{K/(1-\epsilon)}} \left(1+\frac{1}{\ell^{1-\epsilon}}\right)
	\ll_{\epsilon,K} e^{(\log X)^{\epsilon K/(1-\epsilon)}} = X^{o(1)} ,
\]
provided that $\epsilon<1/(K+1)$. This allows us to handle the tails of various summations in the proof via Rankin's trick. For example, we have the bound
\[
\sum_{n>N} \mu^2(n) |\delta_n(\bs a)|
	\le \frac{1}{N^\epsilon} \sum_{n=1}^\infty \mu^2(n) n^\epsilon |\delta_n(\bs a)|
	= \frac{X^{o(1)}}{N^\epsilon},
\]
which is good enough for our purposed by choosing $\epsilon<\min\{\eta,1/(K+1)\}$ and $N$ to be an appropriate power of $X$. We do not pursue this strengthening of the error term when $k=0$ since it is not necessary for our purposes.
\end{rmk}


\subsection{A simplified set of axioms} \label{singular series SF}

For the applications we have in mind, we can simplify further the conditions of Theorem \ref{singular series average}. Instead of \eqref{G}, we suppose that $|\CG(\ell)|$ is usually very close to $\ell^d$:
\begin{equation}\label{G-strong}
\#\CG(\ell) = \ell^d + O(\ell^{d-\eta}),
\end{equation}
where $\eta > 0$ is some fixed absolute constant. We notice once and for all that $\CG(\ell) = (\Z/\ell \Z)^d$ and $\CG(\ell) = ((\Z/\ell \Z)^*)^d$ satisfy this condition.

We continue assuming conditions (1) and (2), but conditions (3)-(5) are simplified as we describe below. Here and for the rest of this paper, given a polynomial $f\in\Z[x_1,\dots,x_n]$, we write $\SC(f)$ for its content, that is to say, the greatest common divisor of its coefficients, and $H(f)$ for its height, that is to say the maximum absolute modulus of its coefficients. With this notation, we postulate the existence of some polynomials $D_j(x_1,\dots,x_d)$, $1\le j\le k$, and $F(x_1,\dots,x_k)$ over $\Z$, an integer $L\ge1$ and some complex coefficients $\lambda_j$, $1\le j\le k$ satisfying the following hypotheses:

\begin{itemize}
\item[(3')] For each $j\in\{1,\dots,k\}$ and each $\bs a\in \CA$, $D_j(\bs a)$ is a discriminant (i.e. $D_j(\bs a)$ is not a perfect square and $D_j(\bs a)\equiv0,1\mod{4}$) and if $\ell\nmid L \cdot D_1(\bs a)\cdots D_k(\bs a)$, then $\bs a\mod \ell\in \CG(\ell)$ and
\[
\delta_\ell(\bs a)
	= \frac{ \lambda_1\leg{D_1(\bs a)}{\ell} +  \cdots + \lambda_k\leg{D_k(\bs a)}{\ell} }{\ell}
		+ O\left(\frac{1}{\ell^{1+\eta}}\right) .
\]

\item[(4')] For every prime $\ell$ and every exponent $r\ge1$,
there is a function $\Delta_{\ell^r}: \Z^d\to \C$ such that:
	\begin{enumerate}
		\item $\Delta_{\ell^r}$ is $\ell^r$-periodic;
		\item $\delta_\ell(\bs a)=\Delta_{\ell^r}(\bs a)$ if $\bs a\mod{\ell^r} \in
			\{ \bs g\in \CG(\ell^r): F(\bs g)\not\equiv 0\mod{\ell^r}\}$;
		\item $\Delta_{\ell^r}$ has a mean value as $r\to\infty$ over $\CG(\ell^r)$, that is to say there is
		a $\Delta_\ell\in\C$ such that
			\[
			\lim_{r\to\infty} \frac{1}{|\CG(\ell^r)|} \sum_{\bs a\in \CG(\ell^r)}
				\Delta_{\ell^r}(\bs a) = \Delta_\ell .
			\]
		Moreover, $|1+\Delta_\ell|\gg 1$.
		\item $\| \Delta_{\ell^r} \|_\infty \ll 1/\ell$, for all $r\ge1$.
	\end{enumerate}
\item[(5')] We have\footnote{The polynomials $F, D_1,\dots,D_k$ and the parameters $L$ and $\lambda_j$ might depend on $X$ or on some other parameters whose size is controlled by $X$, and we are majoring here this dependence.} $\omega(L)\le (\log X)^{O(1)}$, $\SC(F)\ll1$, and $\lambda_j\ll1$
for $1\le j\le k$. Moreover, $F(\bs a)\neq0$ for all $\bs a\in \CA$. Finally, for each $j\in\{1,\dots,k\}$, we have that $H(D_j)\le X^{O(1)}$ and the polynomials $\pm D_j/\SC(D_j)$ are not perfect squares in the ring $\Z[x_1,\dots,x_d]$.
\end{itemize}


\begin{thm}\label{singular series average} \label{dimitris-magical}
Assume that the above simplified set-up holds and fix $\epsilon>0$ and $C\ge1$. Moreover, suppose that $\CA\subset \{\bs a\in \Z^d:\bs a\mod{\ell}\in\CG(\ell)\}$ for each prime $\ell\le Q$. Then the infinite product
\[
P:= \prod_\ell (1+\Delta_\ell)
\]
converges absolutely and we have that
\[
\sum_{\bs a\in \CA} w_{\bs a} P_{\bs a}  =  P \cdot \left( W
	+ O\left(  \frac{\W}{(\log X)^C} + M X^\epsilon
		+ (\log\log X)^{O(1)} \W^{m/(m+1)} E^{1/(m+1)} \right) \right),
\]
where $m=\deg(F)$,
\[
M= \max_{\substack{1\le j\le k \\ n\neq0}}
	\sum_{\substack{\bs a\in\CA \\ D_j(\bs a)/n\ \text{is a square}}} |w_{\bs a}| ,
\qquad
E = \sum_{q\le Q}  q^{d-1} E(\CA;q)  ,
\]
and $Q$ is defined by \eqref{Q}. All implied constants depend at most on $d,k,\eta,\epsilon,C,m$ and the implicit constants in conditions {\rm (1), (2), (3'), (4') and (5')}, and in relations \eqref{bt for A} and  \eqref{G-strong}.
\end{thm}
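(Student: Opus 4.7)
The plan is to derive Theorem \ref{singular series average} as a corollary of Theorem \ref{singular series average general} by factoring out the limiting product $P = \prod_\ell (1+\Delta_\ell)$ and reducing to a sequence of local factors with mean value zero, following the strategy foreshadowed in the footnote to axiom (4)(c).

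First, I would establish the absolute convergence of $P$. For any prime $\ell$, axiom (3') gives
\[
\delta_\ell(\bs a) = \frac{\lambda_1\leg{D_1(\bs a)}{\ell}+\cdots+\lambda_k\leg{D_k(\bs a)}{\ell}}{\ell} + O\left(\frac{1}{\ell^{1+\eta}}\right)
\]
whenever $\ell\nmid L\cdot D_1(\bs a)\cdots D_k(\bs a)$. Averaging $\Delta_{\ell^r}$ over $\CG(\ell^r)$ via axiom (4')(b), the contribution from the exceptional locus $\{\bs g\in\CG(\ell^r):F(\bs g)\equiv 0\mod{\ell^r}\}$ is negligible since this set has size $O_m(\ell^{r(d-1)})$ (by a Schwartz–Zippel type bound for the degree-$m$ polynomial $F$) and $\|\Delta_{\ell^r}\|_\infty\ll 1/\ell$. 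On the complement, I would invoke Weil-type bounds for character sums $\sum_{\bs g\in(\Z/\ell\Z)^d}\leg{D_j(\bs g)}{\ell} \ll_{\deg D_j} \ell^{d-1/2}$, which apply precisely because $\pm D_j/\SC(D_j)$ is not a square by (5'), together with \eqref{G-strong}. This yields $|\Delta_\ell|\ll 1/\ell^{1+\eta'}$ for some $\eta'>0$, hence $\prod_\ell(1+\Delta_\ell)$ converges absolutely.

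Next, I would define the renormalized sequence
\[
\delta'_\ell(\bs a) = \frac{\delta_\ell(\bs a)-\Delta_\ell}{1+\Delta_\ell} \quad \text{if } \bs a\mod\ell\in\CG(\ell), \qquad \delta'_\ell(\bs a) = 0 \text{ otherwise,}
\]
noting that $|1+\Delta_\ell|\gg 1$ by (4')(c). The hypothesis $\CA\subset\{\bs a:\bs a\mod\ell\in\CG(\ell)\}$ for $\ell\le Q$, combined with axiom (1) for the remaining primes and the absolute convergence of $P$, yields the factorization $P_{\bs a} = P\cdot\prod_\ell(1+\delta'_\ell(\bs a))$. I would then verify the axioms of Theorem \ref{singular series average general} for $\delta'_\ell$: axiom (1) is automatic; axiom (2) follows from axiom (2) for $\delta_\ell$ and $|1+\Delta_\ell|\gg 1$; axiom (3) is witnessed by the Kronecker symbol characters $\chi_{j,\bs a}(\cdot) = \leg{D_j(\bs a)}{\cdot}$ with $L_{\bs a} = L\cdot D_1(\bs a)\cdots D_k(\bs a)$, and axiom (5) follows from $H(D_j)\le X^{O(1)}$ together with the divisor bound $\omega(D_j(\bs a))\ll\log X/\log\log X$. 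For axiom (4), I would take $\Delta'_{\ell^r}(\bs a) = (\Delta_{\ell^r}(\bs a)-\Delta_\ell)/(1+\Delta_\ell)$ with exceptional set $\CE(\ell^r) = \{\bs g\in\CG(\ell^r):F(\bs g)\equiv 0\mod{\ell^r}\}$; the crucial mean-zero property (4)(c) then follows directly from the definition of $\Delta_\ell$ in (4')(c). Since $|\CE(\ell^r)|\ll \ell^{r(d-1)}$ uniformly, the parameter $S$ in Theorem \ref{singular series average general} is bounded, giving $e^{O(S)}=O(1)$ for any fixed $\lambda>1$.

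Finally, I would apply Theorem \ref{singular series average general} to $\delta'_\ell$, multiply by $P$, and optimize by setting $\lambda=(m+1)/m$, which converts $\W^{1/\lambda}E^{1-1/\lambda}$ into the desired $\W^{m/(m+1)}E^{1/(m+1)}$. The $M$-term of the general theorem simplifies because the Kronecker symbol $\leg{D_j(\bs a)}{\cdot}$ has conductor $c\ge 2$ precisely when $D_j(\bs a)$ is not $\pm$(a square), and the condition $\cond(\chi_{j,\bs a})=c$ corresponds to $D_j(\bs a)/n$ being a perfect square for a specific $n$ (essentially the square part). The main obstacle I expect will be a careful bookkeeping in this last step: relating the Kronecker symbol conductor to the integer $n$ in the condition ``$D_j(\bs a)/n$ is a square'' requires tracking factors of $2$ and subtle interactions when $\ell\mid D_j(\bs a)$ for $\ell\mid L$, so that the generalized error term $\sum_{\cond(\chi_{j,\bs a})=c}|w_{\bs a}|$ gets correctly absorbed into the $M$ defined in the statement.
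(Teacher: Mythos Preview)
Your overall strategy matches the paper's: reduce to Theorem~\ref{singular series average general} by renormalizing so that the mean of $\Delta'_{\ell^r}$ is zero, with $\CE(\ell^r)=\{\bs g\in\CG(\ell^r):\ell^r\mid F(\bs g)\}$, and take $\lambda=(m+1)/m$. However, two technical claims are wrong and need repair.

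First, the factorization $P_{\bs a}=P\cdot\prod_\ell(1+\delta'_\ell(\bs a))$ is not exact. For primes $\ell>Q$ with $\bs a\mod\ell\notin\CG(\ell)$ (which does happen, e.g.\ when $\ell\mid D_j(\bs a)$ in the applications) you have $\delta_\ell(\bs a)=\delta'_\ell(\bs a)=0$, so the left side carries a factor $1$ while the right side carries $1+\Delta_\ell$. The paper fixes this by inserting a correction
\[
\gamma_{\bs a}=\prod_{\substack{\ell>Q\\ \bs a\bmod\ell\notin\CG(\ell)}}\frac{1}{1+\Delta_\ell},
\]
so that $P_{\bs a}=P\cdot\gamma_{\bs a}\cdot\prod_\ell(1+\delta'_\ell(\bs a))$, and then shows $\gamma_{\bs a}=1+O((\log X)^{-(m+1)C})$ using the decay $\Delta_\ell\ll\ell^{-3/2}$ for $\ell\nmid LN$ together with the bound $\omega(LN)\le(\log X)^{O(1)}$. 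This extra factor must then be absorbed into the weights and tracked through $W$, $\W$, $E$, $M$.

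Second, your bound $|\CE(\ell^r)|\ll_m \ell^{r(d-1)}$ is false: for $d=1$ and $F(x)=x^2$ one has $|\CE(\ell^r)|=\ell^{\lfloor r/2\rfloor}$, not $O(1)$. Schwartz--Zippel only handles $r=1$. The paper instead proves (Lemma~\ref{rho bound}(b), via Stewart's result on polynomial congruences) that $\rho_F(\ell^r)\ll_m (r+1)^{d-1}\ell^{r(d-1/m)}$ when $\SC(F)\ll 1$. With this correct bound, the series $S$ converges only when $\lambda(1-1/m)<1$, i.e.\ $\lambda<m/(m-1)$; so your remark that $S\ll 1$ ``for any fixed $\lambda>1$'' is wrong, and the choice $\lambda=(m+1)/m$ is genuinely forced. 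Once you use the correct bound, the rest of your argument goes through as the paper does.
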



\section{Applications of Theorem \ref{singular series average} } \label{applications}

This section is devoted to the proof of Theorems \ref{LT}-\ref{MEG}.


\subsection{The average Lang-Trotter conjecture} We prove here Theorem \ref{LT}. By a dyadic decomposition argument, it is enough to show that
\[
\sum_{x<p\le 2x} \prob_{\CC_p}(a_p(E)=t) =C_{\text{LT}}(t)
	\int_x^{2x}\frac{\dee u}{2\sqrt{u}\log u} + O_t\left(\frac{\sqrt{x}}{(\log x)^A}\right),
\]
We set
\[
w_p=  f_p(t,p) f_\infty(t,p)
	= \frac{1}{\pi \sqrt{p} }  +   O_t\left(\frac{1}{x}\right) .
\]
Furthermore, we set
\eq{def-delta}{
\delta_\ell(p) =
\begin{cases}
	\ds f_\ell(t,p) - 1
		&\text{if}\ \ell\neq p,\\
		0			
		&\text{otherwise} .
\end{cases}
}
With this notation, we see that
\eq{prob-Gekeler}{
\prob_{\CC_p}(a_p(E)=t) = w_p \prod_\ell (1+\delta_\ell(p)) .
}
We shall apply Theorem \ref{singular series average} with $d=k=1$, $\CA=\{x<p\le 2x\}$, $\CG(\ell)=(\Z/\ell\Z)^*$, $F(a)=t^2-4a$, $D_1(a)=a^2(t^2-4a)$ (the factor $a^2$ is added to guarantee that if $\ell\nmid D_1(a)$, then $a\mod \ell\notin\CG(\ell)$), $L=1$ and $X=2x$. We need to check that the necessary conditions are satisfied. Condition (1) holds by definition and conditions (2) and (3') follow from parts (d) and (c) of Theorem \ref{Gekeler thm}, respectively. Condition (4') holds with
\[
\Delta_{\ell^r}(a) = -1 + \frac{\ell^r\phi(\ell^{r}) \cdot\#\left\{\sigma\in\GL_2(\Z/\ell^r\Z) :
	\begin{array}{l}
	\tr(\sigma)\equiv t\pmod{\ell^r},\\
	\det(\sigma)\equiv a\pmod{\ell^r}
	\end{array}\right\}}
	{|\GL_2(\Z/\ell^r\Z)|}  ,
\]
which satisfies conditions (4'a)-(4'd) with average value
\als{
\Delta_\ell &:= -1 + \lim_{r\to\infty} \frac{\ell^r \cdot  \#\{ \sigma\in\GL_2(\Z/\ell^r\Z) :
	\tr(\sigma)\equiv t \mod {\ell^r}\} }
			{|\GL_2(\Z/\ell^r\Z)|} \\
		&=-1 +
		\frac{\ell \cdot  \#\{ \sigma\in\GL_2(\Z/\ell \Z) :  \tr(\sigma)\equiv t \mod \ell\} }{|\GL_2(\Z/ \ell \Z)|} .
}
(Here we use parts (b) and (d) of Theorem \ref{Gekeler thm} to see conditions (4'b) and (4'd), respectively.) Finally, it is easy to verify condition (5'), and relation \eqref{bt for A} follows easily by the Brun-Titchmarsh inequality with $\W=\sqrt{x}/\log x$.

In conclusion, we may apply Theorem \ref{dimitris-magical}. This will complete the proof of Theorem \ref{LT}, as long as we can control the quantities $W$, $E$ and $M$ that appear there. We use the Prime Number Theorem to see that
\als{
W = \sum_{x<p\le 2x} w_p
	 =  \frac{2}{\pi} \sum_{x<p\le 2x} \left(\frac{1}{2\sqrt{p}}
	 	+ O_t\left(\frac{1}{x}\right)\right) \\
 	= \frac{2}{\pi} \int_{x}^{2x} \frac{\dee u}{2\sqrt{u}\log u}
		+ O_t\left(\frac{\sqrt{x}}{(\log x)^A}\right) .
}
We use the Bombieri-Vinogradov theorem to see that $E\ll \sqrt{x}/(\log x)^B$ for any fixed $B$, and finally, we have that
\[
M  = \max_{n\le -4} \sum_{\substack{x<p\le 2x \\ (t^2-4p)/n\ \text{is a square}}} |w_p|
	\ll  \max_{n\le -4}  \frac{\#\{m\in\Z: 4x< |n|m^2+t^2 \le 8x\}}{\sqrt{x}}
	\ll 1 ,
\]
an estimate that is good enough for our purposes. This completes the proof of Theorem \ref{LT}.


\subsection{The vertical Sato-Tate conjecture}

In this section, we prove Theorem \ref{ST}. Clearly, it suffices to consider the case when $p^{\epsilon-1/2}\le \beta-\alpha\le 2p^{\epsilon-1/2}$; the general case will follow by dividing the interval $[\alpha,\beta]$ into shorter intervals. We start by noting that
\[
\prob_{\CC_p}\left(\alpha\le \frac{a_p(E)}{2\sqrt{p}}\le \beta\right)
	= \sum_{2\alpha\sqrt{p}\le t\le 2\beta\sqrt{p}} \prob_{\CC_p}(a_p(E)=t)  .
\]
So here $p$ is fixed and the averaging is performed over $t\in I:= [2\alpha\sqrt{p},2\beta\sqrt{p}]$. To this end, we let $\CA= I\cap\Z$, $\CG(\ell)=\Z/\ell\Z$, $w_t = f_\infty(t,p)f_p(t,p)$ and $\delta_\ell(t) = {\bf 1}_{\ell\neq p}\cdot(f_\ell(t,p) -1)$,
so that
\[
\prob_{\CC_p}\left(\alpha\le \frac{a_p(E)}{2\sqrt{p}}\le \beta\right) = \sum_{t\in \CA} w_t \prod_\ell (1+\delta_\ell(t)) .
\]

We are going to apply Theorem \ref{singular series average} with $k=d=1$, $D_1(t)=F(t)=t^2-4p$, $L=p$ and $X=2\sqrt{p}$. We need to check that the necessary conditions are satisfied. Condition (1) holds by definition and conditions (2) and (3') follow from parts (d) and (c) of Theorem \ref{Gekeler thm}, respectively. Condition (4') holds with
\[
\Delta_{\ell^r}(t)
		= -1 + \frac{\ell^r \cdot\#\left\{\sigma\in\GL_2(\Z/\ell^r\Z) :
			\begin{array}{l}
			\tr(\sigma)\equiv t\pmod{\ell^r},\\
			\det(\sigma)\equiv p\pmod{\ell^r}
			\end{array}\right\}}
			{\#\left\{\sigma\in\GL_2(\Z/\ell^r\Z) : \det(\sigma)\equiv p\pmod{\ell^r}\right\}}
\]
when $\ell\neq p$ and $\Delta_{p^r}(t)=0$, which satisfies conditions (4'a)-(4'd) with $\Delta_\ell=0$ (here we use parts (a) and (d) of Theorem \ref{Gekeler thm} to see conditions (4'b) and (4'd), respectively). Moreover, it is easy to verify condition (5').

In conclusion, we may indeed apply Theorem \ref{singular series average}, provided that we verify that \eqref{bt for A} is satisfied, which is not as obvious as before. We do this below (we shall take $\W = W$), and we also estimate the quantities $W$, $E$ and $M$ appearing in Theorem \ref{singular series average}. This is a bit more delicate than in the proof of Theorem \ref{LT}. Without loss of generality, we assume that $\alpha\ge0$; the case $\alpha<0$ is treated in an analogous way.

Let $\eta=1-\alpha\ge \beta-\alpha \ge p^{-1/2+\epsilon}$. Since $\alpha \geq 0$, we immediately have that
\eq{ST e1}{
\sqrt{1-u^2} \le \sqrt{1-\alpha^2} \asymp \sqrt{\eta}  \quad(\alpha\le u\le\beta) .
}
Moreover, we claim that
\eq{ST e2}{
 \frac{2}{\pi} \int_\alpha^\beta \sqrt{1-u^2} \dee u \asymp \sqrt{\eta}(\beta-\alpha) \asymp \frac{\sqrt{\eta}}{p^{1/2-\epsilon}} .
}
The implicit upper bound follows immediately by \eqref{ST e1}. For the lower bound, we separate two cases. Firstly, if $\beta\le1-\eta/2$, then we immediately see that $\sqrt{1-u^2}\asymp \sqrt{\eta}$ for all $u\in[\alpha,\beta]$. Finally, if $\beta\ge 1-\eta/2 = \alpha +
\eta/2$, then $\eta\ge\beta-\alpha\ge\eta/2$. Therefore,
\[
\frac{2}{\pi} \int_\alpha^\beta \sqrt{1-u^2} \dee u
	\ge \frac{2}{\pi} \int_\alpha^{\alpha+\eta/2} \sqrt{1-u^2} \,\dee u
	\gg \eta \cdot \sqrt{1-(\alpha+\eta/2)} \asymp \eta\cdot \sqrt{\eta} \asymp \sqrt{\eta}(\beta-\alpha) ,
\]
which completes the proof of \eqref{ST e2}.

Next, for any $q\le p^{\epsilon/2}$ and $a\in\Z$, partial summation implies that
\als{
\sum_{\substack{ t\in I \\ t\equiv a\mod{q} }} f_\infty(t,p)
		&=\frac{1}{\pi\sqrt{p}} \int_{2\alpha\sqrt{p}}^{2\beta\sqrt{p}}
		\sqrt{ 1- \left(\frac{t}{2\sqrt{p}}\right)^2}  \, \dee \left( \frac{t}{q} + O(1) \right)  \\
	&=\frac{2}{\pi q} \int_\alpha^\beta \sqrt{ 1- u^2}  \, \dee u
		 + O\left(  \sqrt{ \frac{\eta}{p} }
		 	+ \frac{1}{\sqrt{p}} \int_{2\alpha\sqrt{p}}^{2\beta\sqrt{p}}
				\left| \frac{\dee}{\dee t} \sqrt{ 1- \left(\frac{t}{2\sqrt{p}}\right)^2} 	 \right| \dee t \right)  \\
	&= \frac{2}{\pi q} \int_\alpha^\beta \sqrt{1-u^2} \, \dee u
	 + O\left( \sqrt{ \frac{\eta}{p} }  \right)\\
&
	 =\left(1+O\left(\frac{q}{p^{\epsilon}} \right)\right)
	   \frac{2}{\pi q} \int_\alpha^\beta \sqrt{1-u^2} \, \dee u .
}
where we used \eqref{ST e2} to get the last line.
Since $f_p(t,p)=1+O(1/p)$, we deduce that
\[
\sum_{\substack{ t\in I \\ t\equiv a\mod{q} }} w_t
	 =\left(1+O\left(\frac{q}{p^{\epsilon}} \right)\right)
	   \frac{2}{\pi q} \int_\alpha^\beta \sqrt{1-u^2} \, \dee u .
\]
In particular, \eqref{bt for A} holds for all $q\le p^{\epsilon/2}$ with $\W=W$. Moreover, for the quantity $E$ appearing in the statement of Theorem \ref{singular series average}, we have that
\[
E \ll \frac{e^{(\log\log 2p)^2}}{p^\epsilon} \cdot \int_\alpha^\beta \sqrt{1-u^2} \dee u .
\]
Finally, for the quantity $M$, we have the estimate
\als{
M = \max_{n\le -4} \sum_{\substack{t\in \CA \\ D(t)/n\ \text{is a square}}} w_t
	&\ll \sqrt{\frac{\eta}{p} }  \max_{n\le-4} \#\{(t,m)\in\Z: t^2-nm^2=4p \} \\
	&\ll \sqrt{\frac{\eta}{p} }
	\asymp \frac{1}{p^{\epsilon}} \cdot \int_\alpha^\beta \sqrt{1-u^2} \, \dee u ,
}
which is good enough for our purposes. This completes the proof of Theorem \ref{ST}.


\subsection{Elliptic curves with a prime number of points}\label{Koblitz-proof}
We show here how to prove Theorem \ref{Koblitz}. First, we deal with the proof of \eqref{Koblitz-1}. We have that
\eq{Koblitz-gekeler}{
\prob_{\CC_p}(|E(\F_p)|\ \text{prime})
	&= \sum_{\substack{q\ \text{prime} \\ p^-<q<p^+}}
		\prob_{\CC_p}(|E(\F_p)|=q) \\
	&= \sum_{\substack{q\ \text{prime} \\ p^-<q<p^+}} f_\infty(p+1-q, p) \prod_{\ell} f_\ell(p+1-q, p)
}
using Theorem \ref{Gekeler}(a). We set $\CG(\ell) = (\Z/\ell\Z)^*$,
\[
\delta_\ell(a)
 =	\begin{cases}
		f_\ell(p+1-a,p)  - 1
			&\text{if}\ \ell\nmid pa,\\
		0	&\text{otherwise},
\end{cases}
\]
\[
\CA = \{q\ \text{prime}: p^-<q<p^+\}
\]
and
\als{
w_q 	&= f_\infty(p+1-q,p) f_p(p+1-q,p) f_q(p+1-q,p) \\
	& =  \frac{1}{\pi\sqrt{p}}\sqrt{1-\left(\frac{p+1-q}{2\sqrt{p}}\right)^2}
			 \left( 1+O\left(\frac{1}{p}\right)\right) .
}
With this notation, we find that
\[
\prob_{\CC_p}(|E(\F_p)|\ \text{prime})
	= \sum_{q\in \CA} w_q \prod_\ell (1+\delta_\ell(q)) .
\]
We are going to apply Theorem \ref{singular series average} with $d=k=1$, $F(a) = (p+1-a)^2-4p$, $D_1(a)=a^2F(a)$, $L=p$ and $X=2p$. We need to check that the necessary conditions are satisfied. As in the previous applications, condition (1) holds by definition, and conditions (2) and (3') follow immediately by Theorem \ref{Gekeler thm}. The same result implies that condition (4') holds with
\[
\Delta_{\ell^r}(a)
	=
		\ds -1 + \frac{\ell^r \cdot \#\left\{ \sigma\in \GL_2(\Z/\ell^r\Z) :  \begin{array}{l}
	\tr(\sigma) \equiv p+1-a  \mod{\ell^r}, \\
	\det(\sigma)\equiv p  \mod{\ell^r}
	\end{array} \right\} }
	{\#\{\sigma\in \GL_2(\Z/\ell^r\Z): \det(\sigma)\equiv p\mod{\ell^r}\} }
\]
if $\ell\neq p$ and with $\Delta_{p^r}(a)=0$, which satisfies condition (4'c) with
\als{
\Delta_\ell	&=
		-1 +\lim_{r\to\infty} \frac{ \ell^r\cdot \#\left\{\sigma\in \GL_2(\Z/\ell^r\Z) :
			 \begin{array}{l}
			 (\det(\sigma)+1-\tr(\sigma),\ell^r)=1, \\
			 \det(\sigma)\equiv p  \mod{\ell^r}
			 \end{array} \right\}}
			 {\phi(\ell^r)\cdot \#\{\sigma\in \GL_2(\Z/\ell^r\Z): \det(\sigma)\equiv p  \mod{\ell^r}\}} \\
		&= -1 + \frac{\ell}{\ell-1} \cdot \frac{\#\left\{\sigma\in \GL_2(\Z/\ell\Z) :
			 \begin{array}{l}
			 (\det(\sigma)+1-\tr(\sigma),\ell)=1, \\
			 \det(\sigma)\equiv p  \mod \ell
			 \end{array}\right\}}
			 {\#\{\sigma\in \GL_2(\Z/\ell\Z): \det(\sigma)\equiv p  \mod \ell \}}
}
when $\ell\neq p$ and with $\Delta_p=0$. Finally, it is easy to see that condition (5') holds too, and relation \eqref{bt for A} follows easily by the Brun-Titchmarsh inequality with $\W=1/\log p$.

In conclusion, we have shown that Theorem \ref{singular series average} is indeed applicable. This will complete the proof of \eqref{Koblitz-1}, as long as we can control the quantities $W$, $M$ and $E$ there. For $M$, we have that
\eq{Koblitz-M}{
M & = \max_{n\le -4} \sum_{\substack{p^-<q<p^+ \\ D_1(q)/n\ \text{is a square}}} w_q \\
	&\ll \max_{n\le -4}\frac{\#\{(q,m): (p+1-q)^2- nm^2 = 4p \}}{\sqrt{p}} \ll \frac{1}{\sqrt{p}} ,
}
an estimate that is good enough for our purposes. Finally, for $W$ and for $E$, we use the following result with $N=p$.


\begin{lma}\label{W2}
For $1\le b\le h\le \sqrt{N}$ and $(a,b)=1$, we have that
\[
\sum_{\substack{N^-<q< N^+ \\ q\ \text{prime}\\ q\equiv a\mod b }}
			\sqrt{1-\left(\frac{N+1-q}{2\sqrt{N}}\right)^2}
	= \frac{\pi\sqrt{N}}{\phi(b)\log N}
			+ O\left( \frac{h}{b} + \frac{1}{h\log N} \int_{N^-}^{N^+} E(y,h;b) \dee y \right) ,
\]
where $E(y,h;b)$ is defined in \eqref{def-E}.
\end{lma}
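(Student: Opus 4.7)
Plan: I will estimate the sum
$$S:=\sum_{\substack{N^-<q<N^+\\q\text{ prime}\\q\equiv a\mod b}}f(q),\qquad f(y):=\sqrt{1-\left(\tfrac{N+1-y}{2\sqrt{N}}\right)^2},$$
extended by $0$ outside $[N^-,N^+]$. A change of variables $u=(N+1-y)/(2\sqrt{N})$ gives $\int_{N^-}^{N^+}f(y)\,\dee y=\pi\sqrt{N}$, which will produce the main term, and a routine check shows that $f$ has finite total variation $\text{TV}(f)=O(1)$ on $\R$. Moreover, $\log q=\log N+O(1/\sqrt{N})$ uniformly for $q\in(N^-,N^+)$. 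The strategy is a sliding-window short-interval decomposition of $(N^-,N^+)$ into pieces of length $h$, invoking the hypothesis that controls $E(y,h;b)$.

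For each integer $y$, define
$$S_y:=\sum_{\substack{y<q\le y+h\\q\text{ prime}\\q\equiv a\mod b}}f(q).$$
Since $f$ vanishes outside $(N^-,N^+)$, each prime $q$ counted in $S$ is counted in exactly $h$ of the $S_y$'s (those with $q-h\le y<q$), whence
$$h\,S=\sum_{y\in\Z}S_y.$$
On a single block, the hypothesis gives $\sum_{y<q\le y+h,\,q\equiv a\!\!\!\mod b,\,q\text{ prime}}\log q=h/\phi(b)+O(E(y,h;b))$, and combining this with $\log q=\log N+O(1/\sqrt{N})$ yields
$$\#\{q\in(y,y+h]:q\equiv a\mod b,\,q\text{ prime}\}=\frac{h}{\phi(b)\log N}+O\!\left(\frac{E(y,h;b)}{\log N}\right).$$
Now write $f(q)=f(y)+(f(q)-f(y))$, bound $|f(q)-f(y)|\le\omega(f;y,h):=\sup_{y\le s\le y+h}|f(s)-f(y)|$, and use the trivial bound $\#\{\text{primes in AP in block}\}\le h/b+1\le 2h/b$ (since $h\ge b$). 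This yields
$$S_y=\frac{f(y)\,h}{\phi(b)\log N}+O\!\left(\frac{E(y,h;b)}{\log N}\right)+O\!\left(\frac{h\,\omega(f;y,h)}{b}\right).$$

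Summing over $y\in\Z$ and dividing by $h$ treats three pieces. The main term comes from $\sum_y f(y)=\int_{N^-}^{N^+}f(y)\,\dee y+O(\text{TV}(f))=\pi\sqrt{N}+O(1)$, producing $\pi\sqrt{N}/(\phi(b)\log N)$ with the $O(1)$-correction absorbed into $O(h/b)$. The $E$-contribution aggregates via the fact that $E(\cdot,h;b)$ is a step function with jumps at integers, so $\sum_{y\in\Z}E(y,h;b)=\int E(y,h;b)\,\dee y$ (plus negligible boundary), which after the $1/h$ factor produces exactly the advertised $(h\log N)^{-1}\int_{N^-}^{N^+}E(y,h;b)\,\dee y$. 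The $\omega$-contribution is handled by
$$\sum_{y\in\Z}\omega(f;y,h)\le\sum_{y\in\Z}\int_y^{y+h}|f'(t)|\,\dee t=\int|f'(t)|\cdot h\,\dee t=h\cdot\text{TV}(f)=O(h),$$
giving a total $\omega$-error of $(1/h)\cdot(h/b)\cdot O(h)=O(h/b)$.

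The main obstacle is to arrange the error term in the exact integral form $(h\log N)^{-1}\int E(y,h;b)\,\dee y$ rather than as a sum over a fixed partition; this is precisely the role of the sliding-window identity $h\,S=\sum_{y\in\Z}S_y$, which sums $E(y,h;b)$ over \emph{every} integer $y$ (not just block starts) and thereby manufactures the extra factor $h$ that, together with the prefactor $1/h$, converts the discrete sum into the integral. A minor technical point is the singular behavior of $f'$ at the endpoints $N^\pm$, which is harmless because only $\text{TV}(f)=O(1)$ enters the bounds.
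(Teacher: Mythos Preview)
Your sliding-window argument is sound and essentially complete; the paper itself gives no independent proof here, deferring to \cite[Lemma~7.1]{CDKS2}, so you are supplying more detail than the paper does.

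There is one slip. You claim that $E(\cdot,h;b)$ is a step function with jumps at integers, whence $\sum_{y\in\Z}E(y,h;b)=\int E(y,h;b)\,\dee y$. In fact $E(y,h;b)$ jumps precisely when $y$ or $y+h$ crosses a prime, not at integers, so this identity fails in general and the passage from the discrete sum to the stated integral is unjustified as written.

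The remedy is to run the sliding window over \emph{real} $y$ rather than integer $y$. The identity $hS=\sum_{y\in\Z}S_y$ has the continuous analogue
\[
hS=\int_{\R}S_y\,\dee y,
\]
since each prime $q$ contributes $f(q)\int_{\R}\mathbf{1}_{y<q\le y+h}\,\dee y=hf(q)$. Your three estimates for $S_y$ then integrate directly: the main term gives $\frac{h}{\phi(b)\log N}\int f=\frac{\pi h\sqrt{N}}{\phi(b)\log N}$ exactly, with no Riemann-sum correction needed; the $E$-term gives $\frac{1}{\log N}\int_{N^-}^{N^+}E(y,h;b)\,\dee y$ after bounding $0\le f\le\mathbf{1}_{[N^-,N^+]}$; and the $\omega$-term gives $\frac{h}{b}\int_{\R}\omega(f;y,h)\,\dee y\le\frac{h^2}{b}\cdot\mathrm{TV}(f)=O(h^2/b)$ by Fubini. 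Dividing through by $h$ yields the lemma.
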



\begin{proof} This result follows by Lemma 7.1 in \cite{CDKS2}, since if $N=m^2k$ and $q\in(N^-,N^+)$, then the quantity $d(q)$ there equals
\[
d(q) = \frac{(N+1-q)^2-4N}{m^2} = 4k\cdot \left( \left(\frac{N+1-p}{2\sqrt{N}}\right)^2-1\right) .
\]
\end{proof}


\noindent
Lemma \ref{W2} then implies that, for any $h\in[p^\epsilon,\sqrt{p}/(\log p)^{2A+1}]$, we have
\[
W  =  \frac{1}{\log p}
			+ O\left( \frac{1}{(\log p)^{2A+1}}
				+ \frac{1}{\sqrt{p}\log p} \int_{p^-}^{p^+} \frac{E(y,h;1)}{h} \dee y \right)
\]
and
\als{
E	&\ll \sum_{b\le e^{(\log\log2p)^2}} \left(\frac{1}{(\log p)^{2A+1} b }
				+ \frac{1}{\sqrt{p}\log p} \int_{p^-}^{p^+} \frac{E(y,h;b)}{h} \dee y \right)    \\
	&\ll \frac{1}{(\log p)^{2A}} + \sum_{b\le e^{(\log\log2p)^2}}
				\frac{1}{\sqrt{p}\log p} \int_{p^-}^{p^+} \frac{E(y,h;b)}{h} \dee y .
}
So, relation \eqref{Koblitz-1} follows.

\medskip

Next, we pass to the proof of relations \eqref{Koblitz-2} and \eqref{Koblitz-3}. In both of these estimates, we note that, by a dyadic decomposition argument, we may restrict the range of $p$ to the interval $(x/2,x]$. Relation \eqref{Koblitz-2} is then an easy consequence of Lemma \ref{bv-short} below and of \eqref{Koblitz-1} with $h=x^{1/3}$, say. Finally, we prove \eqref{Koblitz-3}, which has been reduced to proving that
\[
\sum_{x/2<p\le x}  \prob_{\CC_p}( |E(\F_p)| \; \text{is prime} )  = C_{\text{twin}} \int_{x/2}^x \frac{\dee u}{\log^2u}
	+   O_A\left(\frac{x}{(\log x)^A}\right) .
\]
We could use \eqref{Koblitz-2} or we could work directly with a sum over two primes, $p$ and $q$. We choose the second approach. As before, our starting point is \eqref{Koblitz-gekeler}. We then set $\CG(\ell) = (\Z/\ell\Z)^*\times (\Z/\ell\Z)^*$ and
\[
\delta_{a,b}(\ell)
 =	\begin{cases}
		f_\ell(a+1-b,a)  - 1
			&\text{if}\ \ell\nmid ab,\\
		0	&\text{otherwise},
\end{cases}
\]
\[
\CA = \{(p,q):x/2<p\le x,\ |q-p-1|<2\sqrt{p}\},
\]
and
\als{
w_{p,q} 	&= f_\infty(p+1-q,p) f_p(p+1-q,p) f_q(p+1-q,p) \\
		& =  \frac{1}{\pi\sqrt{p}}\sqrt{1-\left(\frac{p+1-q}{2\sqrt{p}}\right)^2}
			 \left( 1+O\left(\frac{1}{x}\right)\right) .
}
With this notation, we find that
\[
\sum_{x/2<p\le x}\prob_{\CC_p}(|E(\F_p)|\ \text{prime})
	= \sum_{(p,q)\in \CA} w_{p,q} \prod_\ell (1+\delta_\ell(p,q)) .
\]

We are going to apply Theorem \ref{singular series average} with $d=2$, $k=1$, $F(a,b) = (a+1-b)^2-4a$, $D_1(a,b) = a^2b^2F(a,b)$, $L=1$ and $X=2x$. We need to check that the necessary conditions are satisfied. As in the previous applications, condition (1) holds by definition, and conditions (2), (3') and (4') follow by Theorem \ref{Gekeler thm} with
\[
\Delta_{\ell^r}(a,b) = -1 + \frac{\phi(\ell^{r})\ell^r\cdot \#\left\{ \sigma\in \GL_2(\Z/\ell^r\Z) :  \begin{array}{l}
	\det(\sigma)\equiv a  \mod{\ell^r},\\
	\det(\sigma)+1-\tr(\sigma) \equiv b  \mod{\ell^r}\end{array} \right\} }
	{|\GL_2(\Z/\ell^r\Z)|} ,
\]
which satisfies condition (4'c) with
\als{
\Delta_\ell	&=
		-1 +\lim_{r\to\infty} \frac{\ell^r\cdot \#\{\sigma\in \GL_2(\Z/\ell^r\Z):
			 (\det(\sigma)+1-\tr(\sigma),\ell^r)=1 \}}{\phi(\ell^r)\cdot |\GL_2(\Z/\ell^r\Z)|}  \\
		&= -1+ \frac{\ell}{\ell-1} \frac{\#\{\sigma\in \GL_2(\Z/\ell\Z):
			 \det(\sigma)+1-\tr(\sigma)\not\equiv 0\mod{\ell}\}}{|\GL_2(\Z/\ell\Z)|}  .
}
Finally, condition (5') is easy to verify too, and relation \eqref{bt for A} follows easily by the Brun-Titchmarsh inequality with $\W=x/(\log x)^2$.

In conclusion, we have shown that Theorem \ref{singular series average} is indeed applicable. This will complete the proof of Theorem \ref{Koblitz}, as long as we can control the quantities $W$, $M$ and $E$ there. The estimation of $W$ and of $E$ will be carried out in Lemma \ref{W} below. Finally, we have that $M\ll \sqrt{x}$, as in \eqref{Koblitz-M}. So relation \eqref{Koblitz-3} follows.


\subsection{Elliptic curves with a given number of points}
We demonstrate here Theorem \ref{MEG}. In view of Theorem \ref{Gekeler}, we have that
\[
\sum_p \prob_{\CC_p} ( |E(\F_p)|=N)
	= \sum_{N^-<p<N^+ } f_\infty(p+1-N,p) \prod_\ell f_\ell(N,p) .
\]
We let $\CG(\ell)=(\Z/\ell\Z)^*$, $\CA=\{p\ \text{prime}: |p-1-N|<2\sqrt{N}\}$ and
\eq{w-calc}{
w_p = f_\infty(p+1-N,p) f_p(p+1-N,p)
	&= \frac{\sqrt{4p-(p+1-N)^2}}{2\pi p}
	\left(1+O\left(\frac{1}{N}\right)\right)  \\
	&= \frac{\sqrt{4N-(N+1-p)^2} }{2\pi p}
	\left(1+O\left(\frac{1}{N}\right)\right)  \\
	&=  \frac{1}{\pi \sqrt{N}} \sqrt{1- \left(\frac{N+1-p}{2\sqrt{N}}\right)^2}
	\left(1+O\left(\frac{1}{\sqrt{N}}\right)\right)
}
for $p\in(N^-,N^+)$. Moreover, we set
\[
\delta_\ell(a)
	=\begin{cases}
		f_\ell(a+1-N,a) - 1
			&\text{if}\ \ell\nmid a,\\
		0	&\text{otherwise},
\end{cases}
\]
so that
\[
\sum_p \prob_{\CC_p}(|E(\F_p)|=N) = \sum_{p\in \CA} w_p \prod_\ell (1+\delta_\ell(p)) .
\]

We are going to apply Theorem \ref{singular series average} with $d=k=1$ and $F(x)=(x+1-N)^2-4x = (x-1-N)^2-4N$, $D_1(x)=x^2F(x)$, $L=1$ and $X=2N$. We need to check that the necessary conditions are satisfied. Condition (1) holds by definition, and conditions (2), (3') and (4') follow by Theorem \ref{Gekeler thm}, as before, with
\[
\Delta_{\ell^r}(a) = -1 + \frac{\ell^r\phi(\ell^{r})\cdot
		\#\left\{ \sigma\in \GL_2(\Z/\ell^r\Z) :
			\begin{array}{l}
				\det(\sigma)\equiv a  \mod{\ell^r},\\
				\tr(\sigma)\equiv \det(\sigma)+1-N\mod{\ell^r}
			\end{array} \right\} }
		{ |\GL_2(\Z/\ell^r\Z)|}
\]
and
\[
\Delta_\ell= -1 + \lim_{r\to\infty} \frac{\ell^r\cdot
		\#\{\sigma\in\GL_2(\Z/\ell^r\Z)   : \tr(\sigma)\equiv \det(\sigma)+1-N\mod{\ell^r}\}}
			{|\GL_2(\Z/\ell^r\Z)|} .
\]
Finally, it is easy to verify condition (5') too, and relation \eqref{bt for A} follows easily by the Brun-Titchmarsh inequality with $\W=1/ \log N$.

In conclusion, we have shown that Theorem \ref{singular series average} is indeed applicable. This will complete the proof of Theorem \ref{MEN}, as long as we can control the quantities $W$, $M$ and $E$ in Theorem \ref{singular series average}. For $M$, we have that
\als{
M &= \max_{n\le -4} \sum_{\substack{p\in\CA \\ D(p)/n\ \text{is a square}}} w_p \\
	&\ll \max_{n\le -4}\frac{\#\{(p,m) \in\N^2: (p-1-N)^2- n m^2=4N\}}{\sqrt{N}} \ll_\epsilon N^{-1/2+\epsilon} ,
}
an estimate that is good enough for our purposes. Finally, we need to estimate $W$ and $E$. This is accomplished using Lemma \ref{W2}, as in the proof of relation \eqref{Koblitz-1} in Section \ref{Koblitz-proof} above.


\subsection{Elliptic aliquot cycles}
In this subsection, we prove Theorem \ref{aliquot}. By a dyadic decomposition argument, it suffices to show that
\[
\sum_{\bs p \in \CP_d'(x)} \alpha_d(\bs p)
	= C_{\text{aliquot}}^{(d)} \int_x^{2x} \frac{\dee u}{2\sqrt{u}(\log u)^d}
		+ O_A\left( \frac{\sqrt{x}}{(\log x)^A} \right) ,
\]
where
\[
\CP_d'(x) := \{(p_1,\dots,p_d): x<p_1\le 2x,\ |p_{j+1}-p_j-1|<2\sqrt{p_j}\ (1\le j\le d)\ \text{with}\ p_{d+1}=p_1\}.
\]
Theorem \ref{Gekeler}(a) implies that
\als{
\alpha_d(\bs p) = \prod_{j=1}^d \left(f_\infty(p_j+1-p_{j+1},p_j)\prod_{\ell} f_\ell(p_j+1-p_{j+1},p_j) \right)
		= w_{\bs p} \prod_\ell(1+\delta_\ell{(\bs p}) ) ,
}
where
\als{
w_{\bs p} &:=  \prod_{j=1}^d  \prod_{\ell \in\{p_1,\dots, p_d,\infty\}}f_\ell(p_j+1-p_{j+1},p_j)  \\
	 	&= \prod_{j=1}^d \frac{1}{\pi\sqrt{p_j}} \sqrt{1- \left( \frac{p_j+1-p_{j+1}}{2\sqrt{p_j}}\right)^2 }
			\left(1+O\left(\frac{1}{x}\right) \right)
}
and $\delta_\ell(\bs p)$ is defined as
\[
\delta_\ell(\bs p):=
	\begin{cases}
		-1+ \prod_{j=1}^d f_\ell(p_j+1-p_{j+1},p_j)  	
			&\text{if}\ \ell\nmid p_1\cdots p_d, \\
		0	&\text{otherwise}.
	\end{cases}
\]

We are going to apply Theorem \ref{singular series average} with $d$, the length of the aliquot cycle here, playing the role of the dimension $d$ there. We also take $k=d$,
\[
D_j(x_1,\dots,x_d) =  x_j^2((x_j+1-x_{j+1})^2-4x_j) \quad (1\le j\le d,\ x_{d+1}=x_1),
\]
$F(\bs x) = \prod_{j=1}^k ((x_j+1-x_{j+1})^2-4x_j)$, $L=1$ and $X=3x$. We need to check that the necessary conditions are satisfied. As in the previous examples, condition (1) holds by definition, and conditions (2) and (3') follow immediately by Theorem \ref{Gekeler thm}. Next, we check condition (4'). If ${\bs a} = (a_1, \dots, a_d) \in \Z^d$, then we set
\[
\Delta_{\ell^r}(\bs a) = -1 +
			\frac{\phi(\ell^{2r})^d \cdot \# \left\{ \bs\sigma\in \GL_2(\Z/\ell^r\Z)^d :
		\begin{array}{l}
			\det(\sigma_j)\equiv a_j \mod{\ell^r} \quad\text{and} \\
			\det(\sigma_j) +1 -\tr(\sigma_j) \equiv  \det(\sigma_{j+1}) \mod{\ell^r}\\
			\mbox{for $1\le j\le d$, where $\sigma_{d+1}=\sigma_1$}
		\end{array} \right\}}{ \left| \GL_2(\Z/\ell^r \Z) \right|^d} .
\]
Theorem \ref{Gekeler thm} implies that conditions (4'a), (4'b) and (4'd) are satisfied. We also need to prove that (4'c) holds, that is to to say that the limit
\als{
\Delta_\ell
		&:=-1 +\lim_{r\to\infty} \frac{\ell^{rd}\cdot
			\# \left\{ \bs\sigma\in \GL_2(\Z/\ell^r\Z)^d :
				\begin{array}{l}
				\det(\sigma_j) +1 -\tr(\sigma_j) \equiv  \det(\sigma_{j+1}) \mod{\ell^r}\\
				\mbox{for $1\le j\le d$, where $\sigma_{d+1}=\sigma_1$}
				\end{array} \right\}}
			{\left| \GL_2(\Z/\ell^r \Z) \right|^d} .
\kommentar{
		&=-1 + \frac{\ell^d\cdot
			\# \left\{ \bs\sigma\in \GL_2(\Z/\ell\Z)^d :
				\begin{array}{l}
				\det(\sigma_j) +1 -\tr(\sigma_j) \equiv  \det(\sigma_{j+1}) \mod{\ell}\\
				\mbox{for $1\le j\le d$, where $\sigma_{d+1}=\sigma_1$}
				\end{array} \right\}}
			{\left| \GL_2(\Z/\ell \Z) \right|^d}}
}
exists. As noticed in the remark after the statement of Theorem \ref{thm:aliquot}, this step was not necessary in the proof of the other theorems as the limits were stabilizing for small values of $r$ (often $r=1$), which does not seem the case for aliquot cycles.

In view of Theorem \ref{Gekeler thm}(a), proving that the sequence
\[
T_r:= \frac{1}{\ell^{rd}} \sum_{\bs a\in (\Z/\ell^r\Z)^d}
	\prod_{m=1}^d   \left( 1+ \sum_{j=1}^{\min\{r,\nu_\ell(G_m(\bs a))+1\}}
			\frac{N_{G_m(\bs a)}(\ell^j)-N_{G_m(\bs a)}(\ell^{j-1})}{\ell^j}\right)
\]
where $G_m(\bs a):=(a_m+1-a_{m+1})^2-4a_m$, is convergent suffices to conclude that $\Delta_\ell$ is well-defined. We will show that $(T_r)_{r\ge1}$ is a Cauchy sequence. To this end, we consider two large integers $r>s$ and relate $T_r$ to $T_s$. We note that
\eq{aliquot-c}{
T_r = \frac{1}{\ell^{rd}} \sum_{\bs a\in (\Z/\ell^r\Z)^d}
	\prod_{m=1}^d   \left( 1+ \sum_{j=1}^{\min\{s,\nu_\ell(G_m(\bs a))+1\}}
			\frac{N_{G_m(\bs a)}(\ell^j)-N_{G_m(\bs a)}(\ell^{j-1})}{\ell^j}\right) +o_{s\to\infty}(1) .
}
Indeed, either $\nu_\ell(G_m(\bs a)) \le s-1\le r$ for all $m$, in which case there is nothing to prove for that summand, or $\min\{v_\ell(G_m(\bs a))+1,r\}\ge s$ for some $m$, in which case we use the elementary bound $N_D(\ell^j) \ll \ell^{j/2}$ to control the size of the tails of sum over $j$ in the definition of $T_r$. Now, note that the main term in \eqref{aliquot-c} only depends on $G_m(\bs a)\mod{\ell^s}$, that is to say on $\bs a\mod{\ell^s}$. Therefore
\als{
T_r &=  \frac{1}{\ell^{sd}} \sum_{\bs a\in (\Z/\ell^s\Z)^d}
	\prod_{m=1}^d   \left( 1+ \sum_{j=1}^{\min\{s,\nu_\ell(D_m(\bs a))+1\}}
			\frac{N_{D_m(\bs a)}(\ell^j)-N_{D_m(\bs a)}(\ell^{j-1})}{\ell^j}\right)
			+o_{s\to\infty}(1) \\
	&=T_s+o_{s\to\infty}(1),
}
which means that $(T_r)_{r\ge1}$ is a Cauchy sequence, so a convergent sequence. So condition (4') does hold.

Finally, it is easy to verify condition (5') too, and relation \eqref{bt for A} follows easily by the Brun-Titchmarsh inequality with $\W=\sqrt{x}/(\log x)^d$. In conclusion, Theorem \ref{singular series average} is indeed applicable. This completes the proof of Theorem \ref{aliquot}, as long as we can control the quantities $W$, $M$ and $E$ there. The estimation of $W$ and of $E$ will be carried out in Lemma \ref{W} below. Finally, we note that $M \le M_1+\cdots+M_d$, where
\[
M_j   = \max_{n\le -4} \sum_{\substack{\bs p\in \CP_d'(x) \\ ((p_j+1-p_{j+1})^2-4x_j)/n\ \text{is a square}}} w_{\bs p} ,
\]
and we bound each of the $M_j$'s individually. We shall demonstrate the argument for $M_1$, the details for the estimation of $M_2,\dots,M_d$ being very similar. We make the change of variables $h_i=p_i-p_{i-1}$ for $i\in\{2,\dots,d\}$. Then $(p_1+1-p_2)^2-4p_1 = (h_2-1)^2-4p_1$, so that
\als{
M_1 &\ll \max_{n\le -4 }
			\frac{\#\left\{(p_1,h_2,\dots,h_d)\in\Z^d:  \begin{array}{l}
				1\le p_1\le x,\ |h_i| \ll \sqrt{x}\ (2\le i\le d) \\
				((h_2-1)^2-4p_1)/n\ \text{is a square}
				\end{array}\right\} }
				{x^{d/2}}
		\ll 1,
}
which we obtain by noting that, for fixed $h_2,\dots,h_d$, there are always $\ll\sqrt{x}$ choices for $p_1$. Similarly, we may show that $M_j\ll 1$, $2\le j\le d$. We conclude that $M\ll1$, an estimate that is good enough for our purposes. This completes the proof of Theorem \ref{aliquot}.


\subsection{Elliptic curves with a given group structure} We demonstrate here Theorem \ref{MEG}. We recall that $G = \Z/m\Z \times \Z/mk\Z$.
In view of Theorem \ref{Gekeler for groups}, we have that
\[
\sum_p \prob_{\CC_p} (E(\F_p)\cong G)
	= \sum_{\substack{N^-<p<N^+ \\ p\equiv 1\mod{m}}} f_\infty(G,p) \prod_\ell f_\ell(G,p) .
\]
We let
\[
\CA=\left\{ \frac{p-1}{m} : p\ \text{prime},\ p\equiv 1\mod{m},\ N^-<p<N^+  \right\}
\]
and
\[
\CG(\ell) = \{a\in\Z/\ell\Z: 1+am\not\equiv 0\mod{\ell}\} .
\]
Moreover, if $a\in\CA$ and $p=1+am$ is the associated prime, then we define
\[
w_a = f_\infty(G,p) f_p(G,p)
	= \frac{1}{\pi\sqrt{N}} \sqrt{1-\left(\frac{N+1-p}{2\sqrt{N}}\right)^2}
	\left(1+O\left(\frac{1}{\sqrt{N}}\right)\right)
\]
for $p\in (N^-,N^+)$, as in \eqref{w-calc}. Finally, we set $\nu_\ell=\nu_\ell(m)$ and
\[
\delta_\ell(a) =	
	\begin{cases}
		\ds -1 + \ell^{\nu_\ell} \cdot f_\ell(G,1+am)
			&\text{if}\ \ell\nmid 1+am \\
		0	&\text{otherwise},
	\end{cases}
\]
so that
\begin{eqnarray} \label{def-prob}
\sum_p \prob_{\CC_p} (E(\F_p)\cong G)
	= \frac{1}{m} \sum_{a\in \CA} w_{\bs a} \prod_\ell (1+\delta_\ell(a)) .
\end{eqnarray}
If $\ell\nmid 1+am$, then applying Theorem \ref{Gekeler thm}(b) with $t=p+1-N=am-m^2k+2$, $u=1+am$ and $n=m$, so that $t_1=a-mk$ and $u_1=k$, implies that
\eq{groups formula rewrite}{
1+\delta_\ell(a)
	= \lim_{r\to\infty} \frac{\ell^r\phi(\ell^r) \cdot\#\left\{\sigma\in M_2(\Z/\ell^r\Z) :
		\begin{array}{l}
			\tr(\sigma) \equiv a-mk \pmod{\ell^r},\\
			\det(\sigma)\equiv k \pmod{\ell^r} \\
			\sigma\not\equiv 0\mod{\ell}
		\end{array}\right\}}
	{|\GL_2(\Z/\ell^r\Z)|} .
}
Moreover, the same result implies that the above limit stabilizes for $r>\nu_\ell((a+mk)^2-4k)$.

We are going to apply Theorem \ref{singular series average} with the parameters $d$ and $k$ there both equal to 1, $F(a) = (a-mk)^2-4k$, $D_1(a)= (1+am)^2F(a)$, $L=1$ and $X=2\sqrt{k}$. (We allow ourself a slight double notation for one line here, as the $k$ in the definition of $D_1,F$ and $X$ is the $k$ from the statement of Theorem \ref{MEG}, not the parameter in Theorem \ref{singular series average}.)
 We need to check that the necessary conditions are satisfied. As before, condition (1) holds by definition, and conditions (2) and (3') follow immediately by Theorem \ref{Gekeler thm}. Next, in view of relation \eqref{groups formula rewrite}, we take
\[
\Delta_{\ell^r}(a) =  -1
	+ \frac{\phi(\ell^r)\ell^r \cdot\#\left\{\sigma\in M_2(\Z/\ell^r\Z) :
		\begin{array}{l}
			\tr(\sigma) \equiv a-mk \pmod{\ell^r},\\
			\det(\sigma)\equiv k \pmod{\ell^r} \\
			\sigma\not\equiv 0\mod{\ell}
		\end{array}\right\}}
	{|\GL_2(\Z/\ell^r\Z)|} .
\]
Note that $\det(I+m\sigma)\equiv 1+m\tr(\sigma)+m^2\det(\sigma)\equiv 1+am\mod{\ell}$, so we naturally take
\als{
\Delta_\ell &= -1 + \lim_{r\to\infty} \frac{\phi(\ell^r)\ell^r \cdot\#\left\{
		\begin{array}{l}
			\sigma\in M_2(\Z/\ell^r\Z) \\
			\ell\nmid \det(I+m\sigma)
		\end{array}
			:
		\begin{array}{l}
			\det(\sigma)\equiv k \pmod{\ell^r} \\
			\sigma\not\equiv 0\mod{\ell}
		\end{array}\right\}}
	{|\CG(\ell^r)|\cdot |\GL_2(\Z/\ell^r\Z)|} \\
	&= -1 + \frac{\phi(\ell^{\nu_\ell})}{\ell^{\nu_\ell}}
		\lim_{r\to\infty} \frac{\ell^r \cdot\#\left\{
		\begin{array}{l}
			\sigma\in M_2(\Z/\ell^r\Z) \\
			\ell\nmid \det(I+m\sigma)
		\end{array}
			:
		\begin{array}{l}
			\det(\sigma)\equiv k \pmod{\ell^r} \\
			\sigma\not\equiv 0\mod{\ell}
		\end{array}\right\}}
	{|\GL_2(\Z/\ell^r\Z)|} \\
	& =-1 +\frac{\phi(\ell^{\nu_\ell})}{\ell^{5\nu_\ell}} \lim_{r\to\infty} \frac{\ell^r \cdot\#\left\{
		\begin{array}{l}
			\sigma\in M_2(\Z/\ell^{r+\nu_\ell}\Z) \\
			\ell\nmid \det(I+m\sigma)
		\end{array}
			:
		\begin{array}{l}
			\det(\sigma)\equiv k \pmod{\ell^r} \\
			\sigma\not\equiv 0\mod{\ell}
		\end{array}\right\}}
	{|\GL_2(\Z/\ell^r\Z)|} ,
}
where we used the formula $|\CG(\ell^r)|=\phi(\ell^r)\ell^{\nu_\ell}/\phi(\ell^{\nu_\ell})$ to get from the first line to the second line. Making the change of variable $g=I+m\sigma$, which determines $g$ mod $\ell^{r+2\nu_\ell}$, we find that
\als{
\Delta_\ell &= -1 + \frac{\phi(\ell^{\nu_\ell})}{\ell^{5\nu_\ell}} \lim_{r\to\infty}
			\frac{\ell^r \cdot\#\left\{
			g \in \GL_2(\Z/\ell^{r+2\nu_\ell}\Z)
			:
			\begin{array}{l}
			\det(g)+1-\tr(g)\equiv N\mod{\ell^{r+2\nu_\ell}} \\
			g\equiv I\mod{\ell^{\nu_\ell}} \\
			g\not\equiv I\mod{\ell^{\nu_\ell+1}}
		\end{array}\right\}}
	{ |\GL_2(\Z/\ell^r\Z)| } \\
	&= -1 + \phi(\ell^{2\nu_\ell})   \lim_{r\to\infty} \frac{\ell^r \cdot\#\left\{
			g \in \GL_2(\Z/\ell^r \Z)
			:
			\begin{array}{l}
			\det(g)+1-\tr(g)\equiv N\mod{\ell^r} \\
			g\equiv I\mod{\ell^{\nu_\ell}} \\
			g\not\equiv I\mod{\ell^{\nu_\ell+1}}
		\end{array}\right\}}
	{|\GL_2(\Z/\ell^r\Z)|} .
}
Finally, it is easy to verify condition (5') too, and relation \eqref{bt for A} follows easily by the Brun-Titchmarsh inequality with $\W=1/(\phi(m)\log k)$.

In conclusion, we have show that Theorem \ref{singular series average} is indeed applicable. Theorem \ref{MEG} then follows as long as we can control the quantities $W$, $M$ and $E$ in the statement of Theorem \ref{singular series average}. For $M$, we have that
\als{
M &\le \max_{n\le -4} \sum_{\substack{a\in\CA \\ D_1(a)/n\ \text{is a square}}} w_a \\
	&\ll \max_{n\le -4}\frac{\#\{(a,b) \in\N^2: (a-mk)^2- n b^2=4k\}}{\sqrt{N}}
		\ll_\epsilon \frac{k^\epsilon}{\sqrt{N}},
}
an estimate that is good enough for our purposes. an estimate that is good enough for our purposes. Finally, the quantities $W$ and $E$ are estimated using Lemma \ref{W2}, as in the proof of relation \eqref{Koblitz-1} in Section \ref{Koblitz-proof} above.
For $h \in [mk^\epsilon, \sqrt{N}/(\log{k})^{2A+1}]$, we have
\als{
W = \sum_{\bs a \in \CA} w_{\bs a}
	&= \sum_{\substack{ N^- < p < N^+ \\ p \equiv 1 \mod m }}
		\frac{1}{\pi\sqrt{N}} \sqrt{1-\left(\frac{N+1-p}{2\sqrt{N}}\right)^2}
	\left(1+O\left(\frac{1}{\sqrt{N}}\right)\right) \\
	&= \frac{1}{\phi(m)\log N}
		+ O\left( \frac{1}{m (\log N)^{2A+1}}
			+ \frac{1}{\sqrt{N}\log N} \int_{N^-}^{N^+}
			\frac{E(y,h;m)}{h} \dee y \right) .
}
Similarly, if $(1+bm, q) =1$, then
\als{
\sum_{\substack{\bs a \in \CA \\ \bs a \equiv b \mod q  }} w_{\bs a} 	
	&= \sum_{\substack{ N^- < p < N^+ \\  p \equiv 1+ bm \mod{qm} }}
		\frac{1}{\pi\sqrt{N}} \sqrt{1-\left(\frac{N+1-p}{2\sqrt{N}}\right)^2}
	\left(1+O\left(\frac{1}{\sqrt{N}}\right)\right) \\
	&= \frac{1}{\phi(qm)\log N}
		+ O\left( \frac{1}{qm (\log N)^{2A+1}} \right) \\
	&\quad	
			+O\left( \frac{1}{h\sqrt{N}\log N} \int_{N^-}^{N^+}
				 E(y, h; qm)  \dee y \right) .
}
Since $\phi(qm)=|\CG(q)|\phi(m)$, we deduce that
\als{
E \ll \frac{1}{m(\log k)^{2A}} + \frac{R(N,h;m)}{\phi(m)\log N}
}
This completes the proof of Theorem \ref{MEG}.


\subsection{Elliptic curves with a cyclic group of points} We demonstrate here Theorem \ref{cyclicity}. In view of Theorem \ref{Gekeler for groups}, we have that
\als{
\prob_{\CC_p} ( E(\F_p)\ \text{is cyclic})
	&= \sum_{p^-<k<p^+} \prob_{\CC_p} ( E(\F_p)\cong \Z/k\Z)  \\
	&= \sum_{p^-<k<p^+ } f_\infty(p+1-k,p) \prod_\ell f_\ell(\Z/k\Z,p) .
}
We let $\CG(\ell)=\Z/\ell\Z$, $\CA=(p^-,p^+)\cap\Z$, $w_k= f_\infty(p+1-k,p)f_p(p+1-k,p)$ and $\delta_\ell(k)={\bf 1}_{\ell\neq p}\cdot (f_\ell(p+1-\Z/k\Z,p) - 1)$,
so that
\[
\prob_{\CC_p} ( E(\F_p)\ \text{is cyclic})
	= \sum_{k\in \CA} w_k \prod_\ell (1+\delta_\ell(k)) .
\]

We are going to apply Theorem \ref{singular series average} with the parameters $d$ and $k$ both equal to 1, and with $D_1(x)=F(x)=(p+1-x)^2-4p$, $L=p$ and $X=2p$. We need to check that the necessary conditions are satisfied. Condition (1) holds by definition, and conditions (2), (3') and (4') follow by Theorem \ref{Gekeler thm} with
\[
\Delta_{\ell^r}(a) = -1 + \frac{\phi(\ell^{r})\ell^r \cdot
		\#\left\{ \sigma\in \GL_2(\Z/\ell^r\Z) :
			\begin{array}{l}
				\tr(\sigma)\equiv p+1-k\mod{\ell^r}, \\
				\det(\sigma)\equiv p \mod{\ell^r},\\
				\sigma\not\equiv I \mod{\ell}
			\end{array} \right\} }
		{ |\GL_2(\Z/\ell^r\Z)|}
\]
when $\ell\nmid p$ and with $\Delta_{p^r}(a)=0$, which satisfies condition (4'c) with
\als{
\Delta_\ell &= -1 + \lim_{r\to\infty} \frac{\phi(\ell^r)\cdot
			\#\left\{\sigma\in\GL_2(\Z/\ell^r\Z)   :
				\begin{array}{l}	
				\det(\sigma)\equiv p \mod{\ell^r},\\
				\sigma\not\equiv I\mod{\ell}
				\end{array}
				\right\}}
			{|\GL_2(\Z/\ell^r\Z)|} \\
		&= -  \lim_{r\to\infty} \frac{\phi(\ell^r)\cdot
			\#\left\{\sigma\in\GL_2(\Z/\ell^r\Z)   :
				\begin{array}{l}	
				\det(\sigma)\equiv p \mod{\ell^r},\\
				\sigma\equiv I\mod{\ell}
				\end{array}
				\right\}}
			{|\GL_2(\Z/\ell^r\Z)|}
}
when $\ell\neq p$ and with $\Delta_p=0$. Finally, condition (5') is easy to verify too, and relation \eqref{bt for A} holds with $\W=1$.

Before we proceed further, note that
\[
\Delta_\ell = - \frac{ {\bf 1}_{\ell|(p-1)} }{\ell(\ell^2-1)} .
\]
It's easy to see that this is true when $\ell\nmid p-1$ (the condition $\sigma\equiv I\mod{\ell}$ would imply $\det(\sigma)\equiv1\mod \ell$). Finally, if $\ell|p-1$, then note that $\#\{\tau\in M_2(\Z/\ell^{r-1}\Z): \det(I+\ell \tau)\equiv p\mod{\ell^r}\} = \ell^{3(r-1)}$. Therefore
\[
\Delta_\ell = -\phi(\ell^r)  \frac{\ell^{3(r-1)}}{\ell^{4(r-1)} (\ell^2-\ell)(\ell^2-1) }
	= - \frac{\ell-1}{(\ell^2-\ell)(\ell^2-1)} = -\frac{1}{\ell(\ell^2-1)},
\]
as claimed.

In conclusion, we have shown that Theorem \ref{singular series average} is indeed applicable. This will complete the proof of Theorem \ref{cyclicity}, as long as we can control the quantities $W$, $M$ and $E$ in Theorem \ref{singular series average}. For $M$, we have that
\als{
M &= \max_{n\le -4} \sum_{\substack{p^-<k<p^+ \\ D(k)/n\ \text{is a square}}} w_k \\
	&\ll \max_{n\le -4}\frac{\#\{(k,m) \in\N^2: (p+1-k)^2- n m^2=4p\}}{\sqrt{p}} \ll p^{-1/2} ,
}
an estimate that is good enough for our purposes. Finally, we need to estimate $W$ and $E$. By partial summation, we have that
\als{
\sum_{\substack{p^-<k<p \\ k\equiv a\mod q}} w_k
	&=  \frac{1}{\pi \sqrt{p}} \int_{p^-}^{p^+} \sqrt{1-\left(\frac{p+1-t}{2\sqrt p}\right)^2}
		\dee \left(\frac{t}{q} + O(1) \right)
	=\frac{1}{q} +O\left(\frac{1}{\sqrt{p}}\right) .
}
So Theorem \ref{cyclicity} follows.


\subsection{Restricting the trace to an arithmetic progression} We demonstrate here Theorem \ref{trace in APs}. Again, Theorem \ref{Gekeler} implies that
\als{
\prob_{\CC_p} ( a_p(E)\equiv t\mod{N} )
	&= \sum_{\substack{p^-<s<p^+\\ s\equiv t\mod{N}}} \prob_{\CC_p} ( a_p(E) = s) \\
	&= \sum_{\substack{p^-<s<p^+\\ s\equiv t\mod{N}}}  f_\infty(s,p) \prod_\ell f_\ell(s,p) .
}
We let $\CG(\ell)=\Z/\ell\Z$, $\CA=\{a\in\Z: p^-<t+aN<p^+\}$, $w_a= f_\infty(t+Na,p) f_p(t+Na,p)$ and $\delta_\ell(a)= {\bf 1}_{\ell\neq p} \cdot (f_\ell(t+Na,p) - 1)$,
so that
\[
\prob_{\CC_p} ( a_p(E)\equiv t\mod N)
	= \sum_{a\in \CA} w_a \prod_\ell (1+\delta_\ell(a)) .
\]

We let $g=(t^2-4p,N)$. We are going to apply Theorem \ref{singular series average} with $d=k=1$, $D_1(x)=(t+Na)^2-4p$, $F(x) = D_1(x)/g$, $L=p$ and $X=2p$. We need to check that the necessary conditions are satisfied. Condition (1) holds by definition, and conditions (2), (3') and (4') follow by Theorem \ref{Gekeler thm} with
\[
\Delta_{\ell^r}(a) = -1 + \frac{\ell^{r+\nu_\ell(g)}\cdot
		\#\left\{ \sigma\in \GL_2(\Z/\ell^{r+\nu_\ell(g)}\Z) :
			\begin{array}{l}
				\tr(\sigma)\equiv t+Na\mod{\ell^{r+\nu_\ell(g)}}, \\
				\det(\sigma)\equiv p \mod{\ell^{r+\nu_\ell(g)}}
			\end{array} \right\} }
		{  \#\{\sigma\in\GL_2(\Z/\ell^{r+\nu_\ell(g)}\Z):\det(\sigma)\equiv p\mod{\ell^{r+\nu_\ell(g)}}\} }
\]
when $\ell\neq p$ and with $\Delta_{p^r}(a)=0$, which satisfies condition (4'c) with
\als{
\Delta_\ell= -1 + \lim_{r\to\infty}  \frac{\ell^{\nu_\ell(N)}\cdot
		\#\left\{ \sigma\in \GL_2(\Z/\ell^{r+\nu_\ell(g)}\Z) :
			\begin{array}{l}
				\tr(\sigma)\equiv t\mod{\ell^{\nu_\ell(N)}} , \\
				\det(\sigma)\equiv p \mod{\ell^{r+\nu_\ell(g)}}
			\end{array} \right\} }
		{\#\{\sigma\in\GL_2(\Z/\ell^{r+\nu_\ell(g)}\Z):\det(\sigma)\equiv p\mod{\ell^{r+\nu_\ell(g)}}\}}
}
when $\ell\neq p$ and with $\Delta_p=0$. Finally, for condition (5'), we note that $\SC(F)\ll1$ since $N<p$, and in relation \eqref{bt for A} we take $\W=1/N$.

Before we proceed further, we simplify $\Delta_\ell$. If $\ell\nmid N$, then it is easy to see that $\Delta_\ell=0$. Assume, now, that $\ell|N$. In particular, $\ell\neq p$. Making the change of variable $r+\nu_\ell(g)=s+\nu_\ell(N)$, we find that
\als{
\Delta_\ell= -1 + \lim_{s\to\infty}  \frac{\ell^{\nu_\ell(N))} \cdot
		\#\left\{ \sigma\in \GL_2(\Z/\ell^{s+\nu_\ell(N)}\Z) :
			\begin{array}{l}
				\tr(\sigma)\equiv t\mod{\ell^{\nu_\ell(N)}} , \\
				\det(\sigma)\equiv p \mod{\ell^{s+\nu_\ell(N)}}
			\end{array} \right\} }
		{\#\{\sigma\in\GL_2(\Z/\ell^{s+\nu_\ell(N)}\Z):\det(\sigma)\equiv p\mod{\ell^{s+\nu_\ell(N)}}\}} .
}
For each $\sigma_0\in \GL_2(\Z/\ell^{\nu_\ell(N)}\Z)$ with $\tr(\sigma_0)\equiv t\mod{\ell^{\nu_\ell(N)}}$ and $\det(\sigma_0)\equiv p\mod{\ell^{\nu_\ell(N)}}$, it is easy to see that there are precisely $\ell^{3s}$ matrices $\sigma\in \GL_2(\Z/\ell^{s+\nu_\ell(N)}\Z)$ with $\sigma\equiv \sigma_0\mod{\ell^{\nu_\ell(N)}}$ and $\det(\sigma)\equiv p \mod{\ell^{s+\nu_\ell(N)}}$. Therefore,
\als{
\Delta_\ell= -1 + \frac{\ell^{\nu_\ell(N))} \cdot
		\#\left\{ \sigma\in \GL_2(\Z/\ell^{\nu_\ell(N)}\Z) :
			\begin{array}{l}
				\tr(\sigma)\equiv t\mod{\ell^{\nu_\ell(N)}} , \\
				\det(\sigma)\equiv p \mod{\ell^{\nu_\ell(N)}}
			\end{array} \right\} }
		{\#\{\sigma\in\GL_2(\Z/\ell^{\nu_\ell(N)}\Z):\det(\sigma)\equiv p\mod{\ell^{\nu_\ell(N)}}\}} ,
}
and hence the Chinese Remainder Theorem implies that
\[
\prod_{\ell}(1+\Delta_\ell)
	= \frac{N\cdot
		\#\left\{ \sigma\in \GL_2(\Z/N\Z) :
			\begin{array}{l}
				\tr(\sigma)\equiv t\mod N, \\
				\det(\sigma)\equiv p \mod N
			\end{array} \right\} }
		{\#\{\sigma\in\GL_2(\Z/N\Z):\det(\sigma)\equiv p\mod N\}} .
\]

The above discussion will complete the proof of Theorem \ref{trace in APs} as long as we can control the quantities $W$, $M$ and $E$ in Theorem \ref{singular series average}. For $M$, we have that
\als{
M &= \max_{n\le -4} \sum_{\substack{a\in \CA \\ D_1(a)/n\ \text{is a square}}} w_a \\
	&\ll \max_{n\le -4}\frac{\#\{(a,m) \in\N^2: (t+Na)^2-ngm^2=4p\}}{\sqrt{p}} \ll \frac{1}{\sqrt{p}} ,
}
an estimate that is good enough for our purposes. Finally, we need to estimate $W$ and $E$. As in the proof of Theorem \ref{cyclicity}, partial summation implies that
\als{
\sum_{\substack{p^-<t+aN<p \\ a\equiv b\mod q}} w_a
	=\frac{1}{Nq} + O\left(\frac{1}{\sqrt{p}}\right) .
}
So Theorem \ref{trace in APs} follows by taking the parameter $\epsilon$ in the statement of Theorem \ref{singular series average} to be small enough.


\section{Proof of Theorems \ref{singular series average general} and \ref{singular series average}}\label{singular series proof}

In this section, we prove Theorems \ref{singular series average general} and \ref{singular series average}.
We start with the former. Before embarking on its proof, we state an auxiliary result, which is an application of zero-density estimates of $L$-functions, first observed by Elliott (see, also, \cite[Proposition 2.2]{GS}). For a proof of it in the stated form, see \cite[Lemma 2.3]{CDKS2}.


\begin{lma} \label{lemmashortproduct}
Let $\alpha \geq 1$ and $y\ge3$. There is a set $\mathcal{E}_\alpha(y)\subset[1,y]\cap\Z$ of at most $y^{2/\alpha}$ integers such that if $\chi$ is a Dirichlet character modulo $d\le\exp\{(\log y)^2\}$ whose conductor does not belong to $\mathcal{E}_\alpha(y)$, then
\[
L(1,\chi) = \prod_{\ell\le (\log y)^{8\alpha^2}} \left(1-\frac{\chi(\ell)}{\ell}\right)^{-1} \left(1 + O_\alpha\left(\frac1{(\log y)^\alpha}\right)\right).
\]
\end{lma}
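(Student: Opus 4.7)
The plan is to isolate a thin set of exceptional conductors via a zero-density estimate for Dirichlet $L$-functions, and then to express $\log L(1,\chi)$ as a truncated Euler product for the remaining ``good'' characters by a contour shift. This is a standard recipe going back to Linnik and Elliott; the particular exponent $8\alpha^2$ in $z=(\log y)^{8\alpha^2}$ emerges from balancing the width of the guaranteed zero-free rectangle against the polynomial decay produced by the shift.

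First I would fix $T$ polynomial in $\log y$ and define $\mathcal{E}_\alpha(y)$ to be the set of $q\in[1,y]\cap\Z$ that arise as the conductor of some primitive character $\chi$ for which $L(s,\chi)$ has a zero in the rectangle $\{s:\Re(s)\ge 1-\eta,\ |\Im(s)|\le T\}$, where $\eta=\eta(\alpha)$ will eventually be chosen of order $1/\alpha$. A hybrid zero-density estimate of the shape
\[
\sum_{q\le Q}\,\sideset{}{^*}\sum_{\chi\bmod q} N(\sigma,T,\chi) \;\ll\; (Q^2T)^{c(1-\sigma)}(\log QT)^{O(1)},
\]
valid with some absolute constant $c$ (Montgomery--Huxley), then gives $|\mathcal{E}_\alpha(y)|\ll y^{2c\eta}(\log y)^{O(1)}$, which is at most $y^{2/\alpha}$ once $\eta$ is chosen small enough in terms of $\alpha$. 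Characters mod $d$ whose conductor exceeds $y$ but is at most $\exp\{(\log y)^2\}$ add no further exceptional conductors, since in that range the density estimate is even more favorable relative to the target bound $y^{2/\alpha}$.

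For a good character $\chi$, I would apply a smoothed Perron formula to
\[
\log L(s,\chi) = \sum_{n\ge 2}\frac{\Lambda(n)\chi(n)}{n^s\log n}
\]
and shift the contour from $\Re(s)=2$ down past $s=1$ to $\Re(s)=1-\eta$. The zero-free rectangle lets us bound $\log L(s,\chi)$ on the shifted line by $O(\log yT)$ via Borel--Carath\'eodory (or Hadamard three circles), and the horizontal pieces of the contour are negligible because the smoothing weight is taken of Schwartz class. The shifted integral is therefore $O(z^{-\eta}(\log yT)^{O(1)})$; with $z=(\log y)^{8\alpha^2}$ and $\eta=1/(8\alpha)$ this is $z^{-\eta}=(\log y)^{-\alpha}$, as required. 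Finally, the prime-power terms $n=\ell^k$ with $k\ge 2$ contribute $O(1/\log z)$ and are absorbed in the error, which lets us replace $\sum_{n\le z}\Lambda(n)\chi(n)/(n\log n)$ by $-\sum_{\ell\le z}\log(1-\chi(\ell)/\ell)$ and exponentiate to recover the stated factorization of $L(1,\chi)$.

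The hardest part will be the quantitative coordination of the three parameters $\eta$, $T$, and $z$: one must verify that the absolute constant $c$ in the hybrid density estimate, together with the Borel--Carath\'eodory loss on the shifted line, is compatible with the precise exponents $8\alpha^2$ and $2/\alpha$ demanded by the lemma, and that the single exceptional set $\mathcal{E}_\alpha(y)\subset[1,y]$ is strong enough to cover the entire range $d\le\exp\{(\log y)^2\}$. Both amount to careful but essentially routine bookkeeping once the density estimate and its zero-free rectangle consequence are in place.
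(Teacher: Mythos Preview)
The paper does not prove this lemma; it simply cites \cite[Lemma~2.3]{CDKS2} and notes that the underlying idea goes back to Elliott (cf.\ \cite[Proposition~2.2]{GS}). Your sketch is precisely the standard Elliott-type argument those references carry out: isolate the bad conductors via a hybrid zero-density theorem, and for the remaining characters use the resulting zero-free rectangle together with a Perron-type contour shift to truncate the Dirichlet series for $\log L(s,\chi)$. So your proposal is aligned with the intended proof.

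One point deserves correction. Your claim that conductors in $(y,\exp\{(\log y)^2\}]$ are ``even more favorable relative to the target bound $y^{2/\alpha}$'' is not right: with $Q=\exp\{(\log y)^2\}$ the density bound $(Q^2T)^{c\eta}$ is of size $\exp\{2c\eta(\log y)^2\}$, which swamps $y^{2/\alpha}$ for any fixed $\eta>0$. The actual reduction is different. One first passes from $\chi\bmod d$ to the primitive character $\chi^*$ of conductor $q\mid d$, observing that
\[
\frac{L(1,\chi)}{\prod_{\ell\le z}(1-\chi(\ell)/\ell)^{-1}}
=\frac{L(1,\chi^*)}{\prod_{\ell\le z}(1-\chi^*(\ell)/\ell)^{-1}}
\cdot\prod_{\substack{p\mid d,\ p\nmid q\\ p>z}}\bigl(1-\chi^*(p)/p\bigr),
\]
and the extra product has at most $\omega(d)\ll(\log y)^2$ factors each of size $1+O(z^{-1})$, hence is $1+O((\log y)^{2-8\alpha^2})=1+O((\log y)^{-\alpha})$. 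Thus the lemma reduces to primitive characters, and in the paper's application the relevant conductors are all $\le y$ anyway (the moduli $M_{j,\bs a}$ are taken $\le y=X^A$). So the zero-density step is only ever invoked with $Q=y$, where your parameter balance $\eta\asymp 1/\alpha$, $z=(\log y)^{8\alpha^2}$ works as you describe.
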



\begin{proof}[Proof of Theorem \ref{singular series average general}] All implied constants might depend on the various parameters mentioned in the end of the statement of Theorem \ref{singular series average general}. We may assume that $X$ is large enough. Firstly, we use Lemma \ref{lemmashortproduct} to truncate $P_{\bs a}$ and replace it by
\[
P_{\bs a}^{(z)} := \prod_{\ell\le z} (1+\delta_\ell(\bs a)) .
\]
We apply this result with $\alpha=A^2$ and $y=X^A$, where $A$ is a constant to be chosen later. We assume that $A$ is large enough, so that $y\ge M_{j,\bs a}$ for all $\bs a\in\CA$ and all $j\in\{1,\dots,d\}$. Let $z=(\log y)^{8\alpha^2}=(A\log X)^{8A^4}\le e^{(\log\log X)^2}=Q$ for $X$ large enough. If the conductor of $\chi_{j,\bs a}$ does not belong to the exceptional set $\CE_{A^2}(X^A)$ for all $j\in\{1,\dots,d\}$, then conditions (2) and (3) imply that
\[
\sum_{\ell>z} \delta_\ell(\bs a)
 = \sum_{j=1}^k \lambda_{j,\bs a}\sum_{\ell>z} \frac{\chi_{j,\bs a}(\ell)}{\ell}
 	+ O\left(\sum_{\ell>z}\frac{1}{\ell^{1+\eta}} + \sum_{\substack{\ell| L_{\bs a} \\ \ell>z }} \frac{1}{\ell}\right)
 \ll \frac{1}{(\log X)^{C+1}},
\]
provided that $A$ is large enough, since $\omega(L_{\bs a})\le (\log X)^{O(1)}$ by condition (5).
Moreover, condition (2) implies that $P_{\bs a}^{(z)}\ll (\log z)^{O(1)}\ll (\log\log X)^{O(1)}$. Therefore
\[
P_{\bs a} = P_{\bs a}^{(z)} + O\left(\frac{1}{(\log X)^C}\right)
\]
for $\bs a\in \CA$ with $\cond(\chi_{j,\bs a})\notin \mathcal{E}_{A^2}(X^A)$ for all $j\in\{1,\dots,k\}$. Finally, when this last condition fails for some $j\in\{1,\dots,k\}$, then we recall that $\delta_\ell(\bs a)\ll 1/\ell$. So, if $\delta$ is small enough in terms of $\epsilon$, then
\[
P_{\bs a}
	\ll  X^{\epsilon/2} \prod_{\ell>\exp\{X^\delta\}} |1+\delta_\ell(\bs a)|
\]
by condition (2). So, using conditions (3) and (5), and and the Prime Number Theorem for arithmetic progressions, which implies that $\sum_{\ell> \exp\{q^\alpha\}} \chi(\ell)/\ell \ll_\alpha 1$ for fixed $\alpha>0$ and a non-principal character mod $q$, we find that
\als{
P_{\bs a}
	&\ll X^{\epsilon/2}
		\exp\left\{  \Re\left( \sum_{j=1}^k \lambda_{j,\bs a} \sum_{\ell>\exp\{X^\delta\} }
			\frac{\chi_{j,\bs a}(\ell)}{\ell}  \right)
			+ O\left(\sum_{\ell>\exp\{X^\delta\}} \frac{1}{\ell^{1+\eta}}
				+ \sum_{\substack{\ell| L_{\bs a} \\ \ell>\exp\{X^\delta\} }} \frac{1}{\ell}\right)
					\right\} \\
	&\ll X^{\epsilon/2} .
}
Hence,
\[
\sum_{a\in \CA} w_{\bs a}P_{\bs a}
	= \sum_{a\in \CA} w_{\bs a}P_{\bs a}^{(z)}
	+ O\left(   \frac{W}{(\log X)^B}
		+ X^{\epsilon/2} \sum_{j=1}^k
		\sum_{\substack{\bs a\in \CA  \\ \cond(\chi_{j,\bs a}) \in \mathcal{E}_{A^2}(X^A)}}
			|w_{\bs a}| \right) .
\]
Since
\[
\sum_{\substack{\bs a\in \CA \\ \cond(\chi_{j,\bs a}) \in \mathcal{E}_{A^2}(X^A) }} |w_{\bs a}|
	\le M \cdot |\CE_{A^2}(X^A)| \le M X^{2/A} ,
\]
where $M$ is defined in the statement of Theorem \ref{singular series average general}, we find that
\eq{mainthm e1}{
\sum_{\bs a\in \CA} w_{\bs a}P_{\bs a}
	= \sum_{\bs a\in \CA} w_{\bs a}P_{\bs a}^{(z)}
		+ O\left(    \frac{W}{(\log X)^C} + MX^\epsilon    \right)
}
by taking $A\ge 4/\epsilon$.

Next, we turn to the estimation of the main term. We define multiplicatively, for $q\in\N$, $\bs a\in \CA$ and $\bs h \mod q$,
\[
\delta_q(\bs a)
	= \prod_{\ell|q} \delta_\ell(\bs a)
\quad\text{and}\quad
\Delta_{q}(\bs h) = \prod_{\ell^r\| q} \Delta_{\ell^r}(\bs h) .
\]
It then follows from conditions (2) and (4b) that
\begin{align} \label{bound-delta-n}
|\delta_q(\bs a)|
	&\le \frac{c_1^{\omega(q)}}{\rad(q)} \quad\text{and}\\
\label{bound-Delta-q}
|\Delta_{q}(\bs h)| &\le \frac{c_1^{\omega(q)}}{\rad(q)}
	\quad \mbox{if $\bs h \in \CG(\ell^r) \setminus \CE(\ell^r)$ whenever $\ell^r\| q$,}
\end{align}
where $c_1$ is some absolute constant. With this notation, we have that
\[
P_{\bs a}^{(z)} = \sum_{P^+(n)\le z} \mu^2(n)\delta_n({\bs a})   .
\]

Moreover, for each $\bs a\in \CA$, we set
\[
\nu_{\ell,\bs a}=  \min\{ r\ge1 :  \bs a \mod{\ell^r} \notin \CE(\ell^r) \}
\quad\text{and}\quad
q_{n,\bs a} = \prod_{\ell|n} \ell^{\nu_{\ell,\bs a}} ,
\]
which are well-defined in view of condition (4b). Additionally, we define
\[
\CH(\ell^r) = \{\bs h\in \CG(\ell^r) : \bs h \notin \CE(\ell^r), \bs h \mod{\ell^{r-1}} \in \CE(\ell^{r-1})
\}  .
\]
Then, for each $\bs a\in \CA$ with $\bs a\mod \ell\in \CG(\ell)$, we have that
\[
\nu_{\ell,\bs a} = r \iff \bs a \mod{\ell^r} \in \CH(\ell^r).
\]
What is more, if $r=\nu_{\ell,\bs a}$, then $\delta_\ell(\bs a) = \Delta_{\ell^r}(\bs h)$ for all $\bs h \equiv \bs a \mod{\ell^r}$.
For the convenience of notation, given $q\in\N$, we define
\[
\CH(q) = \{\bs h \in (\Z/q\Z)^d : \bs h \mod{\ell^r}\in \CH(\ell^r) \ \text{whenever}\ \ell^r\|q\}
\]
and
\[
\CE(q) = \{\bs h \in (\Z/q\Z)^d : \bs h \mod{\ell^r}\in \CE(\ell^r) \ \text{whenever}\ \ell^r\|q\} .
\]
If $n=\rad(q)$ and $\bs a\mod q \in \CG(q)$, then
\begin{align*}
\bs a \mod q \in \CH(q)
&\iff \ell^{\nu_{\ell,\bs a}}\| q \quad\text{for all primes}\ \ell |n \\
&\iff q_{n, \bs a} = \prod_{\ell|n} \ell^{\nu_{\ell,\bs a}} = q,
\end{align*}
and $\delta_n(\bs a) = \Delta_q(\bs h)$ in that case, where $\bs a \mod q = \bs h$.
If $P^+(n)$ denotes the large prime divisor of $n$ with the convention that $P^+(1)=1$, then
\als{
\sum_{a\in \CA} w_{\bs a} P_{\bs a}^{(z)}
		= \sum_{P^+(n)\le z } \mu^2(n) \sum_{a\in \CA} w_{\bs a}\delta_n({\bs a})
		&= \sum_{P^+(n)\le z } \mu^2(n) \sum_{\substack{q\in \N \\ \rad(q)=n}}
		\sum_{\substack{ a\in \CA \\ q_{n,\bs a}=q }} w_{\bs a}\delta_n(\bs a)
		=S_1+S_2
}
say, where $S_1$ is the part of the sum with $q\le Q$ and $S_2$ is the rest of the sum.

Before estimating $S_1$ and $S_2$, we set
\[
f(b) = |\CE(b)|/b^{d-1},
\]
which is a multiplicative function, and note that, for any fixed $\kappa>0$ and $c>0$, we have
\eq{ss proof e1}{
\sum_{\substack{P^+(bn)\le z \\ \rad(b)|n}}
		\frac{\mu^2(n)c^{\omega(n)} f(b)^\lambda}{n^{1-\kappa/\log z}b}
		\ll (\log\log X)^{O(1)} e^{O(S)},
}
where the implied constants depend on $\kappa$ and $c$, and $S$ is defined as in the statement of Theorem \ref{singular series average general}. Indeed, we note that  if $n$ is square-free and $z$-smooth, then $n^{1/\log z} \le e^{\omega(n)}$. Therefore, writing $n=\rad(b)a$, we deduce that
\als{
\sum_{\substack{P^+(bn)\le z \\ \rad(b)|n}}
		\frac{\mu^2(n)c^{\omega(n)} f(b)^\lambda}{n^{1-\kappa/\log z}b}
	&\le  \sum_{P^+(b)\le z} \frac{(ce^\kappa)^{\omega(b)} f(b)^\lambda}{\rad(b)b}
		\sum_{P^+(a)\le z }
		\frac{\mu^2(a)(ce^\kappa)^{\omega(a)}}{a} \\
	&\ll(\log\log X)^{O(1)}
		\exp\left\{ ce^\kappa \sum_{\ell\le z} \sum_{r=1}^\infty
			\frac{f(\ell^r)^\lambda}{\ell^{1+r}} \right\}
	= (\log\log X)^{O(1)} e^{O(S)} ,
}
which proves \eqref{ss proof e1}.

Let us see now how to estimate $S_1$ and $S_2$. We start with the latter. We need to use \eqref{bt for A}, but the modulus $q$ might be too large. Using \eqref{bound-delta-n} and condition (1), we find that
\[
S_2 = \sum_{P^+(n)\le z} \mu^2(n) 	
		\sum_{\substack{ \bs a\in \CA \\ q_{n,\bs a}>Q}} w_{\bs a} \delta_n(\bs a)
	\ll \sum_{P^+(n)\le z} \frac{\mu^2(n)c_1^{\omega(n)}}{n} 	
		\sum_{\substack{ \bs a\in \CA \\ \bs a\mod n\in \CG(n) \\ q_{n,\bs a}>Q}} |w_{\bs a}| .
\]
Write $q_{n,\bs a} = nq'$, $q'\in\N$, and note that if $\ell^r\| q'$, then $\bs a\mod{\ell^r}\in \CE(\ell^r)$. So, for each $s\in\{0,1,\dots,r\}$, we have that $\bs a\mod{\ell^s}\in \CE(\ell^s)$, with the convention that $\CE(1)=\{1\}$. (Here, we used condition (4b).) Clearly, if $q_{n,\bs a}>Q$, then either $n>Q^{1/2}$ or $q'>Q^{1/2}$. In the latter case, the $z$-smoothness of $q'$ implies that $q'$ has a divisor $b\in(Q^{1/2},Q^{1/2}z]\subset(Q^{1/2},Q]$. Moreover, we have that $\rad(b)|n$ and $\bs a\mod{b} \in \CG(b)\cap \CE(b)$.  Therefore
\als{
S_2 &\ll \sum_{\substack{P^+(n)\le z \\ n>Q^{1/2} }} \frac{\mu^2(n)c_1^{\omega(n)}}{n} \cdot \W
		+ 	\sum_{\substack{P^+(b)\le z \\ Q^{1/2}<b\le Q}}
				\sum_{\substack{ P^+(n)\le z  \\  \rad(b)| n }} \frac{\mu^2(n)c_1^{\omega(n)}}{n}
				\sum_{\substack{\bs a\in \CA \\  \bs a\mod{b}\in \CG(b)\cap\CE(b) }} |w_{\bs a}| \\
	&\ll \W\sum_{\substack{P^+(n)\le z \\ n>Q^{1/2} }} \frac{\mu^2(n)c_1^{\omega(n)}}{n}
		+ (\log\log X)^{c_1}\sum_{\substack{P^+(b)\le z \\ Q^{1/2}<b\le Q}}
			\frac{c_1^{\omega(b)}}{\rad(b)}
				\sum_{\bs h\in \CG(b)\cap \CE(b)} \sum_{\substack{\bs a\in \CA
					\\  \bs a\equiv \bs h\mod b}} |w_{\bs a}| .
}
We may now apply \eqref{bt for A} to deduce that
\eq{S2 e1}{
S_2 \ll \W\sum_{\substack{P^+(n)\le z \\ n>Q^{1/2} }} \frac{\mu^2(n)c_1^{\omega(n)}}{n}
		+ \W (\log\log X)^{c_1}\sum_{\substack{P^+(b)\le z \\ b>Q^{1/2}}}
				\frac{|\CE(b)|}{\rad(b)|\CG(b)|} .
}
Note that $|\CG(b)|\ge c_2^{-\omega(b)}b^d$ for some absolute constant $c_2\ge1$, a consequence of relation \eqref{G}. Therefore H\"older's inequality and \eqref{ss proof e1} imply that
\als{
\sum_{\substack{P^+(b)\le z \\ b>Q^{1/2}}}
				\frac{|\CE(b)|}{\rad(b)|\CG(b)|}
		\le \sum_{\substack{P^+(b)\le z \\ b>Q^{1/2} }}
				\frac{c_2^{\omega(b)} f(b)}{b\rad(b)}
		&\le \left(  \sum_{\substack{P^+(b)\le z \\ b>Q^{1/2} }}
				\frac{c_2^{\frac{\lambda}{\lambda-1}\omega(b)}}{b} \right)^{1-\frac{1}{\lambda}}
			\left( \sum_{P^+(b)\le z}
				\frac{f(b)^\lambda}{\rad(b)^\lambda b}\right)^{\frac{1}{\lambda}} \\
		&\ll (\log\log X)^{O(1)} e^{O(S)} \cdot \left(  \sum_{\substack{P^+(b)\le z \\ b>Q^{1/2} }}
				\frac{c_2^{\frac{\lambda}{\lambda-1}\omega(b)}}{b} \right)^{1-\frac{1}{\lambda}} .
}
For any $c\ge1$ and any $\kappa>0$, we have that
\[
\sum_{\substack{P^+(b)\le z \\ b>Q^{1/2} }} \frac{c^{\omega(b)}}{b}
	\le \frac{1}{Q^{\kappa/(2\log z)}} \sum_{P^+(b)\le z} \frac{c^{\omega(b)}}{b^{1-\kappa/\log z}}
	\asymp \frac{1}{(\log X)^{\kappa/(16A^4)}} \prod_{p\le z}\left(1-\frac{c}{p^{1-1/\log z}} \right)^{-1}.
\]
Since $p^{1/\log z}=1+O(\log p/\log z)$ for $p\le z$, we deduce that
\[
\sum_{\substack{P^+(b)\le z \\ b>Q^{1/2} }} \frac{c^{\omega(b)}}{b}
	\ll \frac{(\log\log X)^c}{(\log X)^{\kappa/(16A^4)}} .
\]
Putting together the above estimates with $\kappa$ large enough implies that
\eq{S2 e3}{
S_2 \ll \frac{ e^{O(S)} \W }{(\log X)^C},
}
which of admissible size.

Next, we estimate $S_1$. We start by noticing that
\[
S_1 = \sum_{P^+(n)\le z } \mu^2(n) \sum_{\substack{q\le Q \\ \rad(q)=n}}
		\sum_{\substack{ a\in \CA \\ q_{n,\bs a}=q }} w_{\bs a}\delta_n(\bs a)  \\
	=   \sum_{\substack{ P^+(q)\le z \\ q\le Q }}
		\sum_{\bs h\in \CH(q)} \Delta_q(\bs h)
		\sum_{\substack{ \bs a\in \CA \\ \bs a\equiv \bs h\mod{q}}} w_{\bs a} .
\]
For the inner sum, we use the approximation
\[
\frac{W}{|\CG(q)|} + O(E(\CA;q)),
\]
which implies that
\[
S_1 = W  \sum_{\substack{ P^+(q)\le z \\ q\le Q }}
	\sum_{\bs h\in \CH(q)} \frac{\Delta_q(\bs h)}{|\CG(q)|}
	+ O\left(\sum_{\substack{q\le Q \\ P^+(q)\le z}} \sum_{\bs h \in \CH(q)}
		 |\Delta_q(\bs h)| E(\CA;q) \right).
\]
In the main term, we extend the summation of $q$ to infinity. Using the bound \eqref{bound-Delta-q}, we then find that
\[
S_1 = W  \sum_{P^+(q)\le z}
	\sum_{\bs h\in \CH(q)} \frac{\Delta_q(\bs h)}{|\CG(q)|}
	+ O\left( \W  \sum_{\substack{ P^+(q)\le z \\ q>Q}}
		\frac{c_1^{\omega(q)}|\CH(q)|}{\rad(q)|\CG(q)|} +
		\sum_{\substack{q\le Q \\ P^+(q)\le z}} \frac{c_1^{\omega(q)}|\CH(q)|}{\rad(q)}
			 E(\CA;q) 	\right).
\]
By H\"older's inequality and relation \eqref{bt for A}, we find that
\als{
\sum_{\substack{q\le Q \\ P^+(q)\le z}} \frac{c_1^{\omega(q)}|\CH(q)|}{\rad(q)} E(\CA;q)
	&\le E^{1-1/\lambda} \cdot \left(\sum_{\substack{q\le Q \\ P^+(q)\le z}}
	\left(\frac{c_1^{\omega(q)}|\CH(q)|}{q^{d-1}\rad(q)}\right)^\lambda q^{d-1}E(\CA;q)\right)^{1/\lambda} \\
	&\ll E^{1-1/\lambda} \cdot \left(\sum_{\substack{q\le Q \\ P^+(q)\le z}}
		\left(\frac{c_1^{\omega(q)}|\CH(q)|}{q^{d-1}\rad(q)}\right)^\lambda
		\frac{q^{d-1}\W}{|\CG(q)|} \right)^{1/\lambda} .
}
We set $n=\rad(q)$ and $q=bn$, so that $\rad(b)|n$ and $|\CH(q)|\le n^d |\CE(b)|$. Since $|\CG(q)|\ge c_2^{-\omega(q)}q^d$, we deduce that
\[
S_1 = W  \sum_{P^+(q)\le z}
	\sum_{\bs h\in \CH(q)} \frac{\Delta_q(\bs h)}{|\CG(q)|}
	+ O(R) ,
\]
where
\als{
R &= \W  \sum_{\substack{ P^+(bn)\le z \\ bn>Q \\ \rad(b)|n}}
		\frac{\mu^2(n) c_3^{\omega(n)} f(b)}{bn} +
		\W^{1/\lambda} E^{1-1/\lambda}
			\left(\sum_{\substack{P^+(bn)\le z \\ \rad(b)|n}}
				\frac{\mu^2(n) c_3^{\omega(n)} f(b)^\lambda}{bn} \right)^{1/\lambda}  \\
	&\le  \frac{\W}{Q^{\kappa/\log z}}
		 \sum_{\substack{ P^+(bn)\le z \\ \rad(b)|n}}
		\frac{\mu^2(n) c_3^{\omega(n)} f(b)}{(bn)^{1-\kappa/\log z}} +
		\W^{1/\lambda} E^{1-1/\lambda} (\log\log X)^{O(1)} e^{O(S)}
}
for some appropriate constant $c_3$, where we used \eqref{ss proof e1}. Applying H\"older's inequality and \eqref{ss proof e1}, we deduce that
\als{
\sum_{\substack{ P^+(bn)\le z \\ \rad(b)|n}}
		\frac{c_3^{\omega(n)} f(b)}{(bn)^{1-\kappa/\log z}}
	&\le \left(\sum_{\substack{ P^+(bn)\le z \\ \rad(b)|n}}
		\frac{\mu^2(n)}{(bn)^{1-\frac{\kappa\lambda}{\lambda-1}\frac{1}{\log z}} }\right)^{1-\frac{1}{\lambda}}
		\left(\sum_{\substack{ P^+(bn)\le z \\ \rad(b)|n}}
		\frac{\mu^2(n)c_3^{\lambda\omega(n)} f(b)^\lambda}{bn}\right)^{\frac{1}{\lambda}} \\
	&\ll (\log\log X)^{O(1)} e^{O(S)} .
}
Putting together the above estimate and \eqref{S2 e3}, we deduce that
\[
\sum_{a\in \CA} w_{\bs a} P_{\bs a}^{(z)}
	= W \sum_{P^+(q)\le z} \sum_{\bs h\in \CH(q)} \frac{\Delta_q(\bs h)}{|\CG(q)|}
		+ O \left( \frac{e^{O(S)} \W }{(\log X)^C}
			+ \W^{{1}/{\lambda}}E^{1-{1}/{\lambda}}(\log\log X)^{O(1)} e^{O(S)} \right) .
\]
The above estimate and relation \eqref{mainthm e1} imply Theorem \ref{singular series average general}, provided that we can show that
\eq{identity}{
\sum_{P^+(q)\le z} \sum_{\bs h\in \CH(q)} \frac{\Delta_q(\bs h) }{|\CG(q)|}  = 1.
}
We may assume that $S<\infty$; otherwise, the conclusion of Theorem \ref{singular series average general} is trivial. In particular, the series $\sum_{r\ge1} f(\ell^r)^\lambda/\ell^{r+1}$ converges for each $\ell\le z$, which implies that
\eq{ss proof e2}{
\lim_{r\to\infty} \frac{f(\ell^r)}{\ell^{r/\lambda}}   = 0
	\quad\implies\quad
	\lim_{r\to\infty} \frac{|\CE(\ell^r)|}{\ell^{r(d-1+1/\lambda)}} = 0 .
}
Now, using multiplicativity, we see immediately that
\als{
\sum_{{P^+(q)\le z}} \sum_{\bs h\in \CH(q)} \frac{\Delta_q(\bs h)}{|\CG(q)|}
	&=\prod_{\ell\le z} \left(1+\sum_{r\ge1} \sum_{\bs h\in \CH(\ell^r)} \frac{\Delta_{\ell^r}(\bs h)}{|\CG(\ell^r)|}\right) \\
	&=\prod_{\ell\le z} \left(1+\lim_{R\to\infty} \sum_{r=1}^R \sum_{\bs h \in \CH(\ell^r)}		
		\frac{\Delta_{\ell^r}(\bs h)}{|\CG(\ell^r)|}\right) \\	
	&=\prod_{\ell\le z} \left(1+
		\lim_{R\to\infty}  \sum_{\bs h\in \CG(\ell^R) \setminus  \CE(\ell^R) }
		\frac{\Delta_{\ell^R}(\bs h)}{|\CG(\ell^R)|}\right) .
}
Applying conditions (4c) and (4d), and recalling that $|\CG(\ell)|\gg \ell^d$, we find that
\als{
\lim_{R\to\infty} \sum_{\bs h\in \CG(\ell^R) \setminus\CE(\ell^R)} \frac{\Delta_{\ell^R}(\bs h)}{|\CG(\ell^R)|}
		&=  \lim_{R\to\infty} \sum_{\bs h\in \CG(\ell^R)} \frac{\Delta_{\ell^R}(\bs h)}{|\CG(\ell^R)|}
		- \lim_{R\to\infty} \sum_{\bs h\in \CG(\ell^R)\cap \CE(\ell^R) }
\frac{\Delta_{\ell^R}(\bs h)}{|\CG(\ell^R)|} \\
		&\ll  \limsup_{R\to\infty} \frac{ \|\Delta_{\ell^R} \|_\infty \cdot  |\CE(\ell^R)|}{|\CG(\ell^R)|} 	
			\ll_\ell \limsup_{R\to\infty} \frac{|\CE(\ell^R)|}{\ell^{dR}} = 0
}
by \eqref{ss proof e2}. So we deduce that relation \eqref{identity} does hold, thus completing the proof of Theorem \ref{singular series average general}.
\end{proof}


Next, we show Theorem \ref{singular series average} in the special case when $\Delta_\ell=0$ for all primes $\ell$, which is easy to deduce from Theorem \ref{singular series average general}. We need a preliminary result. Given a polynomial $f(x_1,\dots,x_d) \in\Z[x_1,\dots,x_d]$, we introduce the notation
\[
\rho_f(n) := \#\{\bs x \in(\Z/n\Z)^d : f(\bs x)\equiv 0\mod{n}\}.
\]
Then we have the following result, part (b) of which is an easy corollary of a result due to Stewart \cite{Stewart:1991}.


\begin{lma}\label{rho bound} Let $\ell$ be a prime and $f(x_1,\dots,x_d)\in\Z[x_1,\dots,x_d]$ a polynomial of degree $m$.
\begin{enumerate}
\item If $f$ is non-zero modulo $\ell$, then
\[
\rho_f(\ell) \le d m \ell^{d-1} .
\]
\item If $r\in\N$ and $v=\min\{r,\nu_\ell(\SC(f))\}$, then
\[
\rho_f(\ell^r)\le m^d (r+1)^{d-1} \ell^{v/m+r(d-1/m)}  .
\]
\end{enumerate}
\end{lma}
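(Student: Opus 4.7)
My plan is to prove part (a) by elementary induction on the number of variables $d$, and to prove part (b) by first handling trivially the case when $\ell^r \mid \SC(f)$, and otherwise stripping off the $\ell$-part of the content and invoking the stated result of Stewart.

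For part (a), the base case $d=1$ is Lagrange's theorem: a nonzero polynomial of degree $\le m$ has at most $m$ roots in $\F_\ell$, so $\rho_f(\ell)\le m= dm\ell^{d-1}$. For the inductive step, I would write
\[
f(x_1,\dots,x_d)=\sum_{i=0}^m f_i(x_2,\dots,x_d)\,x_1^i,
\]
and note that since $f\not\equiv 0\mod \ell$ some $f_{i_0}$ is nonzero modulo $\ell$. Tuples $(x_2,\dots,x_d)$ on which $f_{i_0}$ vanishes number at most $(d-1)m\ell^{d-2}$ by the induction hypothesis, and each contributes at most $\ell$ choices for $x_1$; the remaining at most $\ell^{d-1}$ tuples yield a polynomial of degree $\le m$ in $x_1$ which is nonzero mod $\ell$, contributing at most $m$ choices for $x_1$. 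Adding gives $dm\ell^{d-1}$.

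For part (b), set $v=\min\{r,\nu_\ell(\SC(f))\}$. If $v=r$ (so $\ell^r\mid \SC(f)$), then $f\equiv 0\mod\ell^r$ identically and $\rho_f(\ell^r)=\ell^{rd}$, which is dominated by the claimed bound $m^d(r+1)^{d-1}\ell^{v/m+r(d-1/m)}\ge \ell^{rd}$. Otherwise, $v=\nu_\ell(\SC(f))<r$, so we may write $f=\ell^v g$ with $\SC(g)$ coprime to $\ell$, and any solution to $g\equiv 0\mod\ell^{r-v}$ lifts to exactly $\ell^{vd}$ solutions of $f\equiv 0\mod \ell^r$, giving $\rho_f(\ell^r)=\ell^{vd}\rho_g(\ell^{r-v})$. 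At this point I would apply Stewart's theorem, which for a polynomial $g$ of degree $m$ in $d$ variables with content coprime to $\ell$ yields $\rho_g(\ell^s)\le m^d(s+1)^{d-1}\ell^{s(d-1/m)}$. Substituting $s=r-v$ and using the identity $vd+(r-v)(d-1/m)=r(d-1/m)+v/m$ recovers the stated bound.

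The routine work is entirely in part (a) and in the combinatorial reduction in part (b); the only genuine obstacle is the sharp non-trivial bound on $\rho_g$ when $\SC(g)$ is coprime to $\ell$, and this is precisely what Stewart's result (cited as the input to this lemma) provides. Since the paper invokes Stewart as a black box, there is no further difficulty beyond checking the exponent bookkeeping.
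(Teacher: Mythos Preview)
Your argument for part (a) is correct and is exactly what the paper does (the paper singles out $x_d$ rather than $x_1$, which is immaterial).

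For part (b), there is a genuine gap. Stewart's result \cite{Stewart:1991} is about polynomials in \emph{one} variable; it does not give you a bound of the shape $\rho_g(\ell^s)\le m^d(s+1)^{d-1}\ell^{s(d-1/m)}$ for $g\in\Z[x_1,\dots,x_d]$ with $\ell\nmid\SC(g)$ when $d\ge 2$. The paper uses Stewart exactly and only for the base case $d=1$ (after the same content-stripping and a small trick to ensure nonzero discriminant), obtaining $\rho_g(\ell^s)\le m\,\ell^{s(1-1/m)}$. The passage to general $d$ is then a separate induction: one writes $f(x_1,\dots,x_d)=\sum_j c_j(x_1,\dots,x_{d-1})x_d^j$, picks some $c_{j_0}$ whose content is coprime to $\ell$, and for each tuple $(a_1,\dots,a_{d-1})$ stratifies by $w=\min\{r,\nu_\ell(\gcd_j c_j(a_1,\dots,a_{d-1}))\}$. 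The one-variable case bounds the number of admissible $x_d$ by $m\,\ell^{w/m+r(1-1/m)}$, while the induction hypothesis applied to $c_{j_0}$ bounds the number of tuples with a given $w$ by $m^{d-1}(w+1)^{d-2}\ell^{(d-1)r-w/m}$. Summing over $w=0,\dots,r$ produces the factor $(r+1)^{d-1}$ and the exponent $r(d-1/m)$.

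In short, your reduction via content is fine and your exponent bookkeeping is correct, but the sentence ``apply Stewart's theorem, which for a polynomial $g$ of degree $m$ in $d$ variables \dots'' is where the actual work is hiding. You still owe the induction on $d$ that turns the one-variable Stewart bound into the multivariate one; that induction is the substance of the lemma.
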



\begin{proof} (a) We use induction on $d$. When $d=1$, the result is straightforward. Assume that it is true for polynomials of $d-1$ variables. We write $f$ as a polynomial of $x_d$, whose coefficients are polynomials in $x_1,\dots,x_{d-1}$. At least one of these coefficients must be non-zero modulo $\ell$. Call this coefficient $g(x_1,\dots,x_{d-1})$. For each given choice of $x_1,\dots,x_{d-1}$, either $f$ is non-zero as a polynomial of $x_d$ modulo $\ell$, in which case we have $\le m$ choices for $x_d$ with $f(x_1,\dots,x_d)\equiv0\mod\ell$, or $f$ is zero as a polynomial of $x_d$ modulo $\ell$, in which case we have $\ell$ choices for $x_d$ but also $g(x_1,\dots,x_{d-1})\equiv 0\mod{\ell}$. Therefore
\[
\rho_f(\ell) \le m\cdot \ell^{d-1} + \ell\cdot \rho_g(\ell)	
	\le m\ell^{d-1} + \ell \cdot (d-1)m \ell^{d-2} = dm\ell^{d-1} .
\]
This completes the inductive step and hence the proof of part (a).

\medskip

(b) First, we deal with the case $d=1$, in which case we have to show that
\eq{rho base case}{
\rho_f(\ell^r)\le m \ell^{v/m+r(1-1/m)}  .
}
Let $f_k(x) = f(x) + k\ell^r$ and note that $\rho_{f_k}(\ell^r)=\rho_f(\ell^r)$ and $\min\{r,\nu_\ell(\SC(f_k))\} = v$. On the other hand, if $k\to\infty$, then the roots of $f_k$ over $\C$ tend to infinity too, so they cannot be roots of $f_k'$ at the same time for $k$ large enough. So, by taking $k$ large enough, we may assume that the discriminant of $f$ is non-zero.

Next, write $f(x)=\SC(f)\cdot g(x)$, where $g(x)$ is primitive (that is to say its content is 1). Clearly,
\[
\rho_f(\ell^r) =  \ell^{v} \rho_g(\ell^{r-v})  ,
\]
which reduces the lemma to the case when $\nu_\ell(\SC(f))=0$. The result then follows by \cite[Corollary 2 and eq. (44)]{Stewart:1991} when $m\ge2$, and trivially when $m=1$.

Next, we show the general case. We argue by induction on $d$. Assume that the lemma holds for polynomials of $d-1$ variables. As in the case $d=1$, writing $f(x_1,\dots,x_d)=\SC(f)\cdot g(x_1,\dots,x_d)$ allows us to assume that $f$ is primitive. There are polynomials $c_j(x_1,\dots,x_{d-1})$ of degree $\le m$ such that
\[
f(x_1,\dots,x_d) = \sum_{j=0}^m c_j(x_1,\dots,x_{d-1}) x_d^j .
\]
Clearly, since $f$ is primitive, there must be at least one $j_0\in\{0,1,\dots,m\}$ such that the content of $c_{j_0}$ is not divisible by $\ell$. For each $a_1,\dots,a_{d-1}\in\Z$, we write $C(a_1,\dots,a_{d-1})$ for the greatest common divisor of the polynomial values $c_j(a_1,\dots, a_{d-1})$, $0\le j\le m$. Then
\als{
\rho_f(\ell^r)
	&= \sum_{w=0}^r \sum_{\substack{0\le a_1,\dots,a_{d-1}<\ell^r \\
	\min\{r,\nu_\ell(C(a_1,\dots,a_{d-1}))\} = w }}
		\#\{0\le x_d<\ell^r : f(a_1,\dots,a_{d-1},x_d)=0 \} \\
	&\le \sum_{w=0}^r \sum_{\substack{0\le a_1,\dots,a_{d-1}<\ell^r \\
	\min\{r,\nu_\ell(C(a_1,\dots,a_{d-1}))\} = w }} m \ell^{w/m+r(1-1/m)}  ,
}
by the base case \eqref{rho base case}. Note that if $\min\{r,\nu_\ell(C(a_1,\dots,a_{d-1}))\} = w$, then $\ell^w | c_{j_0}(a_1,\dots,a_{d-1})$. So the number of such $a_1,\dots,a_{d-1}$ mod $\ell^r$ is at most
\[
\ell^{(d-1)(r-w)} \rho_{c_{j_0}}(\ell^w) \le m^{d-1} (w+1)^{d-2} \ell^{(d-1)(r-w)+ w(d-1-1/m)}
	\le m^{d-1} (r+1)^{d-2} \ell^{(d-1)r- w/m},
\]
by the induction hypothesis and our assumption that the content of $c_{j_0}$ is not divisible by $\ell$.
Therefore
\[
\rho_f(\ell^r)
	\le m^d (r+1)^{d-2} \sum_{w=0}^r \ell^{(d-1)r- w/m+w/m+(1-1/m)r}
	= m^d (r+1)^{d-1} \ell^{r(d-1/m)} ,
\]
which completes the induction hypothesis and, hence, the proof of the lemma.
\end{proof}

%

\begin{proof}[Proof of Theorem \ref{singular series average} when $\Delta_\ell=0$.] Without loss of generality, we may assume that the parameter $\eta$ lies in the interval $(0,1/2]$. We first check that we can apply Theorem \ref{singular series average general} under the hypothesis of Theorem \ref{singular series average} in this case. Conditions (1) and (2) hold by assumption. Condition (3') implies that condition (3) holds with $L_{\bs a} = L\cdot |D_1(\bs a) \cdots D_k(\bs a)|$, and (5) holds since a non-zero integer $n$ has $O(\log|n|)$ prime divisors. Finally, condition (4) is satisfied with
\[
\CE(\ell^r) = \{\bs h\in (\Z/\ell^r\Z)^d : \ell^r| F(\bs h) \},
\]
since we have assumed that $\Delta_\ell=0$ here and that $F(\bs a)\neq0$ for $\bs a\in \CA$ in condition (5'). In conclusion, Theorem \ref{singular series average general} is applicable. We need to control the quantity $S$ appearing in its statement. We will take $\lambda=1+1/m$, where $m=\deg(F)$. Since the content of $F$ is $\ll1$ by condition (5'), Lemma \ref{rho bound}(b) implies that
\[
\rho_F(\ell^r) \ll (r+1)^{d-1} \ell^{r(d-1/m)} = \ell^{r(d-1)}\cdot (r+1)^{d-1} \ell^{r(1-1/m)} .
\]
So
\[
\frac{(|\CE(\ell^r)| / \ell^{r(d-1)})^\lambda}{\ell^{r+1}}
	\le  \frac{(\rho_F(\ell^r) /\ell^{r(d-1)})^\lambda}{\ell^{r+1}}
	\ll \frac{(r+1)^{\lambda(d-1)}}{\ell^{1+r(1-\lambda(1-1/m))} } .
\]
Therefore, we see immediately that $S\ll 1$, and Theorem \ref{singular series average}  follows in this special case by Theorem \ref{singular series average general}.
\end{proof}


The remainder of this section is devoted to showing that Theorem \ref{singular series average} can be indeed reduced to the special case when $\Delta_\ell=0$. As before, we may assume that $\eta\in(0,1/2]$. The key step is proving that the quantities $\Delta_\ell$ in condition (4') satisfy the estimate
\eq{delta-goal}{
\Delta_\ell \ll   \frac{1}{\ell^{3/2}}  \quad(\ell \nmid LN),
}
where $L$ is as in condition (3') and $N$ is some appropriate non-zero integer of size $\le X^{O(1)}$. We will show how to construct $N$ later in this section. For now, let us see how \eqref{delta-goal} allows us to reduce Theorem \ref{singular series average} to the case when $\Delta_\ell=0$.

\begin{proof}[Deduction of Theorem \ref{singular series average} from \eqref{delta-goal}]
Note that $P=\prod_\ell(1+\Delta_\ell)$ converges absolutely by \eqref{delta-goal}. We define $\delta_\ell'(\bs a)$ via the relation
\[
\begin{cases}
	\ds  1+\delta_\ell(\bs a) = (1+\delta_\ell'(\bs a))(1+\Delta_\ell)
		&\text{if}\ \bs a\mod\ell\in \CG(\ell), \\
	\delta_\ell'(\bs a) = 0 									
		&\text{otherwise},
\end{cases}
\]
so that
\[
\sum_{\bs a\in \CA} w_{\bs a}P_{\bs a}
	= P\cdot \sum_{\bs a\in \CA} w_{\bs a} \gamma_{\bs a}\prod_\ell (1+\delta_\ell'(\bs a)) ,
\]
where
\[
\gamma_{\bs a}
	= \prod_{\substack{\ell \\ \bs a\mod\ell\notin\CG(\ell)}}\frac{1}{1+\Delta_\ell}
	= \prod_{\substack{\ell> Q  \\ \bs a\mod\ell\notin\CG(\ell)}}
		\frac{1}{1+\Delta_\ell}
				\quad(\bs a\in \CA) ,
\]
since we have assumed that $\CA\subset \{\bs a\in \Z^d:\bs a\mod{\ell}\in\CG(\ell)\}$ for all $\ell\le Q$.

We now show that we can apply Theorem \ref{singular series average} to the quantities $\delta_\ell'(\bs a)$. Condition (1) holds for $\delta_\ell'(\bs a)$ by definition. Conditions (4'c) and (4'd) imply that $\Delta_\ell\ll1/\ell$. Since we also have that $|1+\Delta_\ell|\gg1$ by (4'c), we deduce that
\[
\delta_\ell'(\bs a) = \delta_\ell(\bs a) + O\left(\frac{1}{\ell}\right) ,
\]
so that condition (2) holds for $\delta_\ell'(\bs a)$ too by the same condition for $\delta_\ell(\bs a)$. Condition (3') holds by \eqref{delta-goal} with $L$ replaced by $LN$. Defining
\[
1 + \Delta^\prime_{\ell^r}(\bs a)
	= \frac{1 + \Delta_{\ell^r}(\bs a)}{1+\Delta_\ell},
\]
we see that condition (4') holds for the $\ell^r$-periodic function $\Delta^\prime_{\ell^r}(\bs a)$, which has average $\Delta^\prime_{\ell} = 0$. Condition (5') is also easily seen to hold.

Applying Theorem \ref{singular series average} to the sequence $\delta_\ell'(\bs a)$, with the weights $w_{\bs a}$ replaced by $w_{\bs a}\gamma_{\bs a}$, we get
that
\[
\sum_{\bs a\in \CA} w_{\bs a} P_{\bs a}  =  P \cdot \left( W'
	+ O\left(  \frac{\W'}{(\log X)^C} + M' X^\epsilon
		+ (\log\log X)^{O(1)} \W'^{1-1/(m+1)} E'^{1/(m+1)} \right) \right),
\]
where $W'$, $M'$ and $E'$ are defined as $W$, $M$ and $E$, with the difference that $w_{\bs a}$ is replaced by $w_{\bs a}\gamma_{\bs a}$, and $\W':=\W\cdot \max_{\bs a\in\CA} |\gamma_{\bs a}|$.

We will show that $\gamma_{\bs a}$ is very close to 1 and, as a result, relate $W'$ to $W$, $M'$ to $M$, $E'$ to $E$, and $\W'$ to $\W$. Note that
\[
\sum_{\ell>Q} |\Delta_\ell|
	\ll \sum_{\ell>Q}  \frac{1}{\ell^{1+\eta}}
		+ 	\sum_{\substack{\ell> Q \\ \ell| LN}} \frac{1}{\ell}
	\ll \frac{1}{Q^\eta} + \frac{(\log X)^{O(1)}}{Q} \ll \frac{1}{(\log X)^{(m+1)C}} ,
\]
by \eqref{delta-goal} and the fact that
\[
\omega(LN) \le \omega(L)+\omega(N)\le \omega(L)+\frac{\log|N|}{\log 2} \ll (\log X)^{O(1)},
\]
a consequence of condition (5'), and our assumption that $N \leq X^{O(1)}$. Therefore,
\[
\gamma_{\bs a} = 1 + O\left( \frac{1}{(\log X)^{(m+1)C}} \right) .
\]
So, we see immediately that
\[
W'= \sum_{\bs a\in \CA} w_{\bs a}\gamma_{\bs a} = W+ O\left(\frac{\W}{(\log X)^{(m+1)C}}\right)
\]
and, if $E$ is as in the statement of Theorem \ref{singular series average}, then relation \eqref{bt for A} implies that
\als{
\sum_{q\le Q} q^{d-1} \max_{\bs g\in \CG(q)}
	\left| \sum_{\substack{\bs a\in \CA \\ \bs a\equiv \bs g\mod q}}w_{\bs a} \gamma_{\bs a}- \frac{W'}{|\CG(q)|}\right|
	&\ll E + \sum_{q\le Q} q^{d-1} \cdot \frac{\W}{|\CG(q)| (\log X)^{C(m+1)}}  \\
	&\ll E +  \W\cdot \frac{(\log\log X)^{O(1)}}{(\log X)^{C(m+1)}},
}
since $|\CG(q)|\ge q e^{-O(\omega(q))}$ by \eqref{G} and the fact that $|\CG(\ell^r)|=\ell^{(r-1)d}|\CG(\ell)|$. This proves Theorem \ref{singular series average}. 
\end{proof}

The crucial result for the construction of the number $N$ and the deduction of \eqref{delta-goal} is provided by the following theorem. Recall that $H(f)$ denotes the height of a polynomial $f$.

%

\begin{thm}\label{deligne} Let $f(x_1,\dots,x_d)\in \Z[x_1,\dots,x_d]$ of degree $m$. If $f$ is not of the form $c\cdot g(x_1,\dots,x_d)^2$, where $c\in\Q$ and $g\in\Q[x_1,\dots,x_d]$, then there is some $B\in\Z$ such that $1\le|B| \le H(f)^{O_{d,m}(1)}$ and for all primes $\ell\nmid B$,
\[
\sum_{x_1,\dots,x_d\in\Z/\ell\Z} \leg{f(x_1,\dots,x_d)}{\ell} \ll_{d,m} \ell^{d-1/2} .
\]
\end{thm}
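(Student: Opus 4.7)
\bigskip
\noindent
\textbf{Proof plan for Theorem \ref{deligne}.}

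\smallskip

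The strategy is to exhibit the character sum as the point count of a geometrically irreducible hypersurface and then apply Deligne's bound. First I would factor $f$ in the UFD $\Z[x_1,\dots,x_d]$ as
\[
f = c\cdot f_1^{e_1}\cdots f_r^{e_r},
\]
with $c\in\Z\setminus\{0\}$ and $f_1,\dots,f_r$ distinct primitive irreducible polynomials. Since $f$ is not of the form $c_0 g^2$ with $g\in\Q[x_1,\dots,x_d]$, unique factorization forces at least one exponent, say $e_1$, to be odd. Moreover, because we are in characteristic zero, each $f_i$ remains separable (and squarefree) over $\overline{\Q}$, and the $f_i$'s stay pairwise coprime over $\overline{\Q}$; hence $f$ is likewise not a constant multiple of a square in $\overline{\Q}[x_1,\dots,x_d]$.

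The key algebraic step is to control the finite set of primes for which this non-squareness can fail after reduction. I would let $B$ be the nonzero integer obtained as the product of (i) $c$, (ii) the leading coefficients of the $f_i$ with respect to some fixed variable ordering, (iii) the nonzero discriminants of the $f_i$ (to ensure each $f_i\bmod\ell$ remains squarefree over $\overline{\mathbb F_\ell}$), and (iv) the pairwise resultants $\mathrm{Res}(f_i,f_j)$ for $i\ne j$ (to ensure distinct $f_i$ stay coprime modulo $\ell$). For $\ell\nmid B$, the $\overline{\mathbb F_\ell}$-factorization of $f\bmod\ell$ is just the reduction of the $\overline{\Q}$-factorization, so $f_1\bmod\ell$ (and hence some absolutely irreducible factor of it) still appears with odd multiplicity $e_1$. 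To bound $|B|$ in terms of $H(f)$, I would invoke standard height/Mahler-measure inequalities: the irreducible factors $f_i$ of $f$ have heights bounded by $H(f)^{O_{d,m}(1)}$ (Mignotte-type bounds), and resultants and discriminants of polynomials of degree $\le m$ in $d$ variables with heights $\le H(f)^{O(1)}$ are themselves of size $\le H(f)^{O_{d,m}(1)}$.

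For $\ell\nmid B$, consider the affine variety
\[
V_\ell\colon\ y^2 = f(x_1,\dots,x_d)\qquad\text{over }\mathbb F_\ell.
\]
The fact that $f\bmod\ell$ is not a constant times a square over $\overline{\mathbb F_\ell}$ implies (this is a standard argument: a hypersurface $y^2=F$ is geometrically irreducible exactly when $F$ is not a constant times a perfect square) that $V_\ell$ is geometrically irreducible of dimension $d$. Deligne's bound for varieties over finite fields (equivalently, the Weil bound for multiplicative character sums of $f$, cf.\ the work of Deligne and Katz) then yields
\[
\#V_\ell(\mathbb F_\ell) = \ell^d + O_{d,m}\bigl(\ell^{d-1/2}\bigr).
\]
On the other hand, counting $y$ over $f(x)=0$, nonzero squares, and non-squares gives
\[
\#V_\ell(\mathbb F_\ell) \;=\; \ell^d \;+\; \sum_{x\in\mathbb F_\ell^d}\leg{f(x)}{\ell},
\]
with the usual convention $\leg{0}{\ell}=0$. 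Subtracting $\ell^d$ yields the claimed bound.

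The main obstacle is the effective bound $|B|\le H(f)^{O_{d,m}(1)}$: the algebraic-geometric content (Step 3) is essentially off-the-shelf, but translating the qualitative condition ``not a constant times a square over $\Q$'' into a quantitative bound on the bad primes requires the height estimates for irreducible factors, resultants and discriminants. These are, however, classical consequences of Mahler-measure theory and of the fact that all these constructions are polynomial in the coefficients of the input with explicit degree bounds in terms of $d$ and $m$, so the implicit constant in the exponent $O_{d,m}(1)$ depends only on $d$ and $m$ as claimed.
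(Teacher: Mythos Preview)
Your high-level strategy---realize the character sum as $\#V_\ell(\mathbb F_\ell)-\ell^d$ for the hypersurface $V_\ell:y^2=f$ and invoke Lang--Weil once geometric irreducibility is known---is sound and genuinely different from the paper's route. The paper never appeals to Lang--Weil or Deligne in dimension $>1$; instead it proves the bound by induction on $d$, using only the Weil bound for the curve $y^2=f(x_1)$ as the base case and a fibration-and-specialization argument (together with Lemma~\ref{rho bound}) for the inductive step. So the two proofs trade off: yours is conceptually shorter once the black box is granted, while the paper's is self-contained and entirely elementary modulo the curve case.

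Where your proposal has a genuine gap is the construction of $B$. You propose to take the product of the content $c$, leading coefficients, ``discriminants'' of the $f_i$, and pairwise ``resultants'' $\mathrm{Res}(f_i,f_j)$. For multivariate $f_i$, these objects are not integers: the resultant of two polynomials in $\Z[x_1,\dots,x_d]$ with respect to $x_d$ is a polynomial in $x_1,\dots,x_{d-1}$, and it can even vanish identically if neither factor involves $x_d$. Likewise there is no single ``discriminant'' of a multivariate $f_i$ that certifies squarefreeness over $\overline{\mathbb F_\ell}$. Your appeal to Mignotte-type bounds controls the heights of the $f_i$, but it does not by itself produce a nonzero \emph{integer} whose nonvanishing modulo $\ell$ forces the $\overline{\mathbb F_\ell}$-factorization to agree with the $\overline{\Q}$-factorization. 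One can repair this---e.g.\ by iterated elimination, or by invoking an effective form of Noether's theorem that the locus of polynomials of the shape $c\cdot g^2$ is Zariski-closed in coefficient space---but that is real additional work, not a routine citation.

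The paper sidesteps this entirely via Lemma~\ref{squares}: after shifting so that $f(\bs 0)\neq 0$, it writes down explicit homogeneous integer polynomials $S_1,\dots,S_N$ in the coefficients of $f$ (depending only on $d,m$) that simultaneously vanish if and only if $f=c\cdot g^2$ over any field of characteristic $\neq 2$. Since $f$ is not of that form over $\Q$, some $S_j$ evaluated at the coefficients of $f$ is a nonzero integer $B'$, automatically of size $H(f)^{O_{d,m}(1)}$, and for $\ell\nmid 2B'f(\bs 0)$ the reduction $f\bmod\ell$ is again not of the form $c\cdot g^2$. This gives exactly the input your Step~5 needs, and with the height bound for free. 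If you want to keep your Lang--Weil endgame, the cleanest fix is to replace your factorization-based construction of $B$ with this coefficient-polynomial construction.
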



Before proving Theorem \ref{deligne}, let us see how it can be used to construct $N$.

\begin{proof}[Proof of \eqref{delta-goal}] We shall prove this relation with $N=\SC(F)B_1\cdots B_k$, where $B_j$ is the number $B$ associated to the polynomial $D_j$ by Theorem \ref{deligne}. Such a number exists because Gauss's lemma and our assumption that the polynomials $\pm D_j/\SC(D_j)$ are not squares in $\Z[x_1,\dots,x_d]$ (see condition (5')) imply that $D_j$ is not of the form $c\cdot g^2$, where $c\in\Q$ and $g$ is a polynomial over $\Q$ in $d$ variables. Moreover, $\SC(F)\ll1$ and $B_j \le H(D_j)^{O(1)} \le X^{O(1)}$, where we used condition (5) again. Therefore, $N\ll X^{O(1)}$, as claimed.

Now, fix a prime number $\ell\nmid LN$. Note that if $\ell\nmid F(\bs n) D_1(\bs n)\cdots D_k(\bs n)$, then $\bs n\mod\ell \in \CG(\ell)$ and
\[
\Delta_{\ell^r}(\bs n)
	= \delta_\ell(\bs n)
	= \frac{\lambda_1 \leg{D_1(\bs n)}{\ell} +  \cdots + \lambda_k \leg{D_k(\bs n)}{\ell} }{\ell}
		+ O\left(\frac{1}{\ell^{1+\eta}}\right)
\]
for all $r\ge1$, by conditions (3') and (4'b). Set $D_0=F$. Since $\|\Delta_{\ell^r}\|_\infty \ll1/\ell$ and $\CG(\ell^r) = \{\bs g\in\Z/\ell^r\Z:\bs g\mod{\ell}\in \CG(\ell)\}$, we have that
\als{
\frac{1}{|\CG(\ell^r)|} \sum_{\bs n\in \CG(\ell^r)} \Delta_{\ell^r}(\bs n)
	&= \frac{1}{|\CG(\ell^r)|\ell} \sum_{j=1}^k \lambda_j
		\sum_{\bs n\in \CG(\ell^r)}  \leg{D_j(\bs n)}{\ell} \\
	&	\quad
		+ O\left(\frac{1}{\ell^{1+\eta}}
		+ \sum_{j=0}^k \frac{\#\{\bs n\in (\Z/\ell^r\Z)^d: \ell|D_j(\bs n)\}}{|\CG(\ell^r)| \ell} \right) \\
	&= \frac{1}{|\CG(\ell)|\ell} \sum_{j=1}^k \lambda_j
		\sum_{\bs n\in \CG(\ell)}  \leg{D_j(\bs n)}{\ell}
		+ O\left(\frac{1}{\ell^{1+\eta}} + \sum_{j=0}^k \frac{\rho_{D_j}(\ell)}{|\CG(\ell)|\ell} \right) .
}
The polynomial $D_0$ is non-zero mod $\ell$ because $\ell\nmid \SC(F)=\SC(D_0)$, and the same is true for $D_1,\dots,D_k$, because $\ell\nmid B_1\cdots B_k$, whence
\[
 \sum_{j=0}^k \frac{\rho_{D_j}(\ell)}{|\CG(\ell)|\ell} \ll \frac{1}{|\CG(\ell)| \ell} .
\]
Combining the above estimates with \eqref{G-strong} that states that $|\CG(\ell)|=\ell^d+O(\ell^{d-\eta})$, we conclude that
\[
\frac{1}{|\CG(\ell^r)|} \sum_{\bs n\in \CG(\ell^r)} \Delta_{\ell^r}(\bs n)
	=  \frac{1}{\ell^{d+1}} \sum_{j=1}^k \lambda_j
		\sum_{\bs n\in (\Z/\ell\Z)^d} \leg{D_j(\bs n)}{\ell}
		+ O\left(\frac{1}{\ell^{1+\eta}}\right) .
\]
Finally, the condition that $\ell\nmid B_1\cdots B_k$ implies that $D_j$ cannot be of the form $cg^2$ mod $\ell$. Applying Theorem \ref{deligne} and letting $r\to\infty$ then completes the proof of \eqref{delta-goal}.
\end{proof}

In order to complete the proof of \eqref{delta-goal}, and thus of Theorem \ref{singular series average}, it remains to prove Theorem \ref{deligne}. We need a preliminary result.

%

\begin{lma}\label{squares} Let $d,m\in\N$ and set $N=(2m+1)^d-(m+1)^d$. There are homogenous polynomials $S_1,\dots,S_N$ over $\Z$ in $(2m+1)^d$ variables, depending at most on $m$ and $d$, with the following property. If $K$ is a field of characteristic different from 2 and
\[
f(x_1,\dots,x_d) = \sum_{0\le i_1,\dots,i_d\le 2m}  c_{i_1,\dots,i_d} x_1^{i_1}\cdots x_d^{i_d} \in K[x_1,\dots,x_d]
\]
with $c_{0,\dots,0}\neq0$, then $f$ is of the form $c\cdot g^2$, where $c\in K$ and $g$ is an element of the ring $K[x_1,\dots,x_d]$, if, and only if, $S_j(\{c_{i_1,\dots,i_d}:0\le i_1,\dots,i_d\le 2m\})=0$ for all $j\in\{1,\dots,N\}$.
\end{lma}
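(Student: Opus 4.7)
The plan is to realize $g$ (after a normalization) as the formal power-series square root of $f/c_{\bs 0}$, and to identify the $S_j$ with those coefficients of this square root that must vanish whenever $g$ is a polynomial of coordinate-degree at most $m$ in each variable.

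First I would normalize. Write $\bs 0=(0,\ldots,0)$; given any factorization $f=cg^2$ with $g=b_{\bs 0}+\cdots$, the identity $c_{\bs 0}=c\,b_{\bs 0}^2\neq 0$ forces $b_{\bs 0}\in K^\times$, so replacing $(c,g)$ by $(c_{\bs 0},\,g/b_{\bs 0})$ one may assume $c=c_{\bs 0}$ and $g$ has constant term $1$. Since $\mathrm{char}(K)\neq 2$, there is a unique formal power series
\[
h=1+\sum_{J\neq \bs 0} a_J \bs x^J \in K[[x_1,\ldots,x_d]]
\]
with constant term $1$ and $h^2=f/c_{\bs 0}$. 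Comparing coefficients gives the recursion
\[
2a_J=\frac{c_J}{c_{\bs 0}}-\sum_{\substack{I+I'=J\\ I,I'\neq \bs 0,J}} a_I a_{I'}\qquad (J\neq \bs 0),
\]
which expresses $a_J$ as a polynomial in the $c_I/c_{\bs 0}$ with coefficients in $\Z[1/2]$. An induction on $|J|$ then shows that for suitable integers $e_J,k_J\ge 0$, the product $S_J:=2^{k_J}c_{\bs 0}^{e_J} a_J$ lies in $\Z[c_I:I\in[0,2m]^d]$ and is homogeneous of degree $e_J$: under the uniform scaling $c_I\mapsto \lambda c_I$ each ratio $c_I/c_{\bs 0}$ is invariant, hence $a_J$ is invariant, while $c_{\bs 0}^{e_J}$ picks up $\lambda^{e_J}$.

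The heart of the proof is the following equivalence. Let $B=\{J:0\le j_i\le m\text{ for all }i\}$, so $\#([0,2m]^d\setminus B)=(2m+1)^d-(m+1)^d=N$. I claim that $f$ factors as $cg^2$ with $c\in K^\times$ and $g\in K[x_1,\ldots,x_d]$ of coordinate-degree $\le m$ if and only if $a_J=0$ for every $J\in[0,2m]^d\setminus B$. For the forward implication, the normalized $g$ has constant term $1$ and coordinate-degree $\le m$, so by uniqueness of the formal square root $g=h$, giving $a_J=g_J=0$ for $J\notin B$. For the converse, I would prove by induction on $|J|$ that the hypothesis forces $a_J=0$ for every $J\notin B$ (not just those in $[0,2m]^d$): when $J\in[0,2m]^d\setminus B$ this is the hypothesis itself, while if $J\notin[0,2m]^d$, then $c_J=0$ and in each product $a_I a_{I'}$ on the right of the recursion the possibility $I,I'\in B$ would force $J=I+I'\in[0,2m]^d$, a contradiction; hence one of $I,I'$ lies outside $B$ with strictly smaller $|\cdot|$, and the inductive hypothesis gives $a_I a_{I'}=0$. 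Therefore $h\in K[x_1,\ldots,x_d]$ has coordinate-degree $\le m$, and $f=c_{\bs 0}h^2$ furnishes the desired factorization.

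Indexing the $N$ elements of $[0,2m]^d\setminus B$ as $J_1,\ldots,J_N$ and setting $S_j:=S_{J_j}$ completes the construction, because for $c_{\bs 0}\neq 0$ and $\mathrm{char}(K)\neq 2$ the vanishing of $S_j$ is equivalent to that of $a_{J_j}$. The only technical obstacle is the bookkeeping needed to verify that each $a_J$ can really be cleared into a homogeneous integer polynomial by multiplying by an appropriate power of $2c_{\bs 0}$; this is a routine induction once the recursive definition is in hand, and the homogeneity is immediate from the scaling invariance of $a_J$ noted above.
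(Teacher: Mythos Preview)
Your proof is correct and follows essentially the same approach as the paper: both normalize so that $g(\bs 0)=1$, recursively solve $2a_{\bs k}=\tilde c_{\bs k}-\sum_{\bs i+\bs j=\bs k,\,\bs i,\bs j\neq\bs 0}a_{\bs i}a_{\bs j}$ for the square-root coefficients, and take the $S_j$ to be the $N$ consistency conditions indexed by $\bs k\in[0,2m]^d\setminus[0,m]^d$, homogenized by clearing powers of $2$ and $c_{\bs 0}$. The only cosmetic difference is that the paper works directly with the finite system (so the converse is immediate once the $P_{\bs i}$ are substituted back), whereas you pass through the full formal power series and therefore need the short extra induction showing that vanishing of $a_J$ for $J\in[0,2m]^d\setminus B$ forces vanishing for all $J\notin B$; this yields a slightly different but equivalent choice of the polynomials $S_j$.
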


%

\begin{proof} We shall denote $(i_1,\dots,i_d)$ by $\bs i$, and $(0,\dots,0)$ by $\bs 0$. Also, we write $\bs i\le \bs j$ if $i_n\le j_n$ for all $n\in\{1,\dots,d\}$. Set $\tilde{f}=f/ c_{\bs 0}$. The coefficients of $\tilde f$ are the numbers $\tilde{c}_{\bs i}:= c_{\bs i}/c_{\bs 0}$. In particular, $\tilde{c}_{\bs 0}=1$. Clearly, $f$ is of the form $cg^2$ if, and only if, $\tilde f$ is of the same form. If, now, $\tilde{f}=cg^2$, then we must have that $g(\bs 0)^2=1/c$. This means that we may restrict our attention to studying whether the equation $\tilde{f}=g^2$ has a solution with $g(\bs 0)=1$. This condition is equivalent to the existence of coefficients $a_{\bs i}\in K$,  $0\le i_1,\dots,i_d\le m$, such that $a_{\bs 0}=1$ and
\eq{squares-e1}{
\sum_{\substack{\bs i+\bs j = \bs k}} a_{\bs i} a_{\bs j}= \tilde{c}_{\bs k} ,
}
for $0\le k_1,\dots,k_d\le 2m$. We claim that the conditions \eqref{squares-e1} for $k_1,\dots,k_d\le m$ are altogether equivalent to having that
\[
a_{\bs i} = P_{\bs i}( \{\tilde{c}_{\bs r} : \bs r\le \bs i\})
\]
for $\bs i\neq\bs0$, where $P_{\bs i}$ is a polynomial in $(i_1+1)\cdots (i_d+1)$ variables, whose coefficients are of the form $2^{v} n$ with $n,v\in\Z$. Indeed, since $a_{\bs 0}=1$, we have that
\[
2a_{\bs k} = \tilde{c}_{\bs k} - \sum_{\substack{\bs i+\bs j=\bs k \\ \bs i,\bs j\neq\bs 0}} a_{\bs i} a_{\bs j} \quad(k_1,\dots,k_d\le m) .
\]
Proceeding by induction on $\sum_{n=1}^d k_n$ proves our claim about the polynomials $P_i$ (where it is clear that the coefficients of $P_i$ depends only on $m$ and $d$). Finally, we substitute the expressions we have found for $a_{\bs i}$ to \eqref{squares-e1} when at least one of the $k_n$'s is $>m$. This implies that relation \eqref{squares-e1} with $0\le k_1,\dots,k_d\le 2m$ is actually equivalent to the relations
\[
\tilde{c}_{\bs k} =  \sum_{\substack{\bs i+\bs j = \bs k}} P_{\bs i}( \{\tilde{c}_{\bs r} : \bs r\le \bs i\})
	\cdot P_{\bs j}( \{\tilde{c}_{\bs r} : \bs r \le \bs j\})
	\quad (m<\max\{k_1,\dots,k_d\}\le 2m) .
\]
Multiplying by a high enough power of $c_{\bs 0}$ the above equations yields $(2m+1)^d-(m+1)^d$ homogeneous polynomial equations in the coefficients of $f$ that are equivalent to $f$ being of the form $cg^2$. This completes the proof of the lemma.
\end{proof}

%

\begin{proof}[Proof of Theorem \ref{deligne}] All implied constants might depend on $d$ and $m$. It is easy to see by induction on $d$ that there are integers $n_1,\dots,n_d\in[0,m]$ such that $f(n_1,\dots,n_d)\neq0$. So, replacing $f$ by $\widetilde{f}(x_1,\dots,x_d) = f(x_1+n_1,\dots,x_d+n_d)$, we may assume without loss of generality that $f(\bs 0)\neq0$. Since $f$ is not of the form $cg^2$ over $\Q$, Lemma \ref{squares} implies that there is some integer polynomial expression in the coefficients of $f$, let's call it $B'$, which is not zero. Moreover, if $\ell\nmid B'$, where $\ell$ is an odd prime, and $\ell\nmid f(0,\dots,0)$, then the same lemma implies that $f(x_1,\dots,x_d)$ is not of the form $c\cdot g(x_1,\dots,x_d)^2$ modulo $\ell$. We will show that the theorem holds with $B=2B'f(\bs0) = H(f)^{O(1)}$. So, we need to show that if $\ell\nmid B$, then
\eq{deligne-e1}{
\sum_{x_1,\dots,x_d\in \Z/\ell\Z} \leg{f(x_1,\dots,x_d)}{\ell} \ll \ell^{d-1/2} .
}
It suffices to show that if $\ell$ is an odd prime such that $\ell\nmid f(\bs 0)$ and modulo which $f$ is not of the form $cg^2$, then \eqref{deligne-e1} is true. Fix such a prime $\ell$. We argue by induction on $d$. If $d=1$, then this follows by \cite[Theorem 11.13, p. 281]{IK} applied to the curve $y^2=f(x_1)$. (Note that the condition there that the polynomial $y^2-f(x_1)$ is absolutely irreducible is equivalent to $f$ not being of the form $cg^2$.) Assume now that the theorem is true for polynomials of $<d$ variables. We write
\eq{deligne-e2}{
f(x_1,\dots,x_d) = \sum_{j=0}^{m} c_j(x_1,\dots,x_{d-1}) x_d^j.
}
We have that $c_0(\bs0)=f(\bs0)\not\equiv 0\mod{\ell}$. In particular, $c_0$ is non-zero mod $\ell$.
We distinguish two cases.

\medskip

\textit{Case 1: $c_0f$ is a perfect square mod $\ell$.} In this case, we can reduce to the case of $d-1$ variables: we have that
\als{
\sum_{x_1,\dots,x_d\in \Z/\ell\Z} \leg{f(x_1,\dots,x_d)}{\ell}
	&= \sum_{\substack{x_1,\dots,x_d\in \Z/\ell\Z \\ c_0(x_1,\dots,x_{d-1})\neq0 \\ f(x_1,\dots,x_d)\neq0}}
		\leg{c_0(x_1,\dots,x_{d-1})}{\ell} + O(\ell\cdot \rho_{c_0}(\ell) + \rho_f(\ell) ) \\
	&= \ell \sum_{x_1,\dots,x_{d-1}\in \Z/\ell\Z}
		\leg{c_0(x_1,\dots,x_{d-1})}{\ell} + O(\ell^{d-1}),
}
by Lemma \ref{rho bound}(a). Since $c_0f$ is a perfect square and $f$ is not of the form $cg^2$ mod $\ell$, we must have that $c_0$ is not of the form $cg^2$ mod $\ell$ either. Moreover, $\ell\nmid c_0(\bs 0)=f(\bs 0)$, and the induction hypothesis implies \eqref{deligne-e1} in this case.

\medskip

\textit{Case 2: $c_0f$ is not a perfect square mod $\ell$.} We claim that in this case there are $O(\ell^{d-2})$ choices of $n_1,\dots,n_{d-1}\in\Z/\ell\Z$ such that the polynomial $f(n_1,\dots,n_{d-1},x_d)$ is of the form $c\cdot g(x_d)^2$ mod $\ell$ as a polynomial of $x_d$. This suffices to deduce \eqref{deligne-e1}. Indeed, if $n_1,\dots,n_{d-1}$ are such that $f(n_1,\dots,n_{d-1},x_d)$ is not of the form $c\cdot g(x_d)^2$ mod $\ell$, then applying \eqref{deligne-e1} with $d=1$ implies that
\[
\sum_{x_d\in \Z/\ell\Z} \leg{f(n_1,\dots,n_{d-1},x_d)}{\ell} \ll \ell^{1/2} ,
\]
and the proof of the inductive step is completed. Thus, it suffices to prove our claim.

Fix $n_1,\dots,n_{d-1}$ such that
 $f(n_1,\dots,n_{d-1},x_d)$ is of the form $c\cdot g(x_d)^2$ as a polynomial of $x_d$. The coefficients of $f(n_1,\dots,n_{d-1},x_d)$ as a polynomial of $x_d$ are given by \eqref{deligne-e2}. By Lemma \ref{rho bound}(a), there are only $O(\ell^{d-2})$ choices of $n_1,\dots,n_{d-1}$ such that $c_0(n_1,\dots,n_{d-1})\equiv 0\mod{\ell}$. Assume, now, that $c_0(n_1,\dots,n_{d-1})\not\equiv 0\mod{\ell}$. Then, following the proof of Lemma \ref{squares}, we see that $F(x_d)=f(n_1,\dots,n_{d-1},x_d)/c_0(n_1,\dots,n_{d-1})$ must be of the form $(1+a_1x_d+\cdots+a_{m'}x_d^{m'})^2$, for certain integers $a_j$ which are polynomial expressions in the coefficients of $F$, that is to say the $a_j$'s are polynomials in the rational functions $c_i(n_1,\dots,n_{d-1})/c_0(n_1,\dots,n_{d-1})$, $1\le i\le d$. Multiplying through by $c_0(n_1,\dots,n_{d-1})^{2k}$ for a large enough $k$, we see that
\eq{deligne-e3}{
c_0(n_1,\dots,n_{d-1})^{2k-1} f(n_1,\dots,n_{d-1},x_d) \equiv h(n_1,\dots,n_{d-1},x_d)^2
		\quad \mod{\ell}
}
for all $x_d\in\Z/\ell\Z$, where $h$ is some polynomial in $\Z[x_1,\dots,x_d]$. However, we know that the polynomial
\[
c_0(x_1,\dots,x_{d-1})^{2k-1} f(x_1,\dots,x_d) -h(x_1,\dots,x_d)^2
\]
is non-zero in the polynomial ring $(\Z/\ell\Z)[x_1,\dots,x_d]$, by our assumption that $c_0f$ is not a perfect square and the fact that $(\Z/\ell\Z)[x_1,\dots,x_d]$ is a unique factorisation domain. Consequently, Lemma \ref{rho bound}(a) implies that the number of $n_1,\dots,n_{d-1}$ for which \eqref{deligne-e3} holds for all $x_d\in\Z/\ell\Z$ is $\ll \ell^{d-2}$.
This completes the proof of our claim that there are $O(\ell^{d-2})$ choices of $n_1,\dots,n_{d-1}\in\Z/\ell\Z$ such that the polynomial $f(n_1,\dots,n_{d-1},x_d)$ is of the form $c\cdot g(x_d)^2$ as a polynomial of $x_d$. Hence, \eqref{deligne-e1} follows in this last case too. This completes the proof of Lemma \ref{deligne}.
\end{proof}


\section{An auxiliary result} \label{auxiliary}

We prove here the promised estimate needed to handle the main and the error term in the proof of Theorems \ref{Koblitz} and \ref{aliquot}. The main input to our result is the following estimate about primes in short arithmetic progressions, proven in \cite{Kou}.


\begin{lma}\label{bv-short}Fix $\epsilon>0$ and $A\ge1$. For $x\ge h\ge2$ and $1\le Q^2\le h/x^{1/6+\epsilon}$, we have that
\[
\int_x^{2x}\sum_{q\le Q} E(y,h;q) \dee y \ll\frac{xh}{(\log x)^A},
\]
where $E(y,h;q)$ is defined by \eqref{def-E}.
\end{lma}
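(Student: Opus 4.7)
The plan is to invoke the main theorem of \cite{Kou}, established by the second author, which is precisely a Bombieri-Vinogradov theorem for primes in short intervals in the range of parameters we require. For the reader's benefit I sketch the essential steps.

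First, converting $\sum_{y < p \le y+h} \log p$ to $\sum_{y < n \le y+h} \Lambda(n)$ introduces an error of size $O(\sqrt{x}\log^2 x)$ per interval, which is negligible after integration. Using orthogonality of Dirichlet characters modulo $q$ and isolating the principal character (whose contribution is handled via the Siegel-Walfisz theorem for $q \le (\log x)^B$, together with the classical mean value theorem for primes in short intervals of length $\ge x^{1/6+\epsilon}$), one reduces $E(y,h;q)$ to a sum over non-principal characters. Passing to primitive characters in the usual way, the task becomes to estimate
\[
\sum_{q \le Q} \frac{1}{\phi(q)} \sum_{\chi \pmod{q}}^{*} \int_x^{2x} \bigl| \psi(y+h; \chi) - \psi(y; \chi) \bigr| \, \dee y,
\]
where $\psi(t; \chi) = \sum_{n \le t} \Lambda(n) \chi(n)$ and the star restricts to primitive characters.

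Next, I would apply Heath-Brown's identity to decompose $\Lambda$ as $O_K(\log^{2K} x)$ convolutions of $2K$ arithmetic sequences supported on dyadic ranges. Each piece is either of Type I (one factor is smooth and long) or Type II (two factors are balanced). The integration over $y \in [x,2x]$ converts the short-interval sums into $L^2$-norms of twisted Dirichlet polynomials via a Parseval-type identity. The Type II contributions are then handled by the hybrid large sieve inequality
\[
\sum_{q \le Q} \frac{q}{\phi(q)} \sum_{\chi \pmod{q}}^{*} \int_{-T}^{T} \Bigl| \sum_{n \sim N} a_n \chi(n) n^{it} \Bigr|^2 \dee t \ll (Q^2 T + N) \|a\|_2^2,
\]
which supplies a gain of order $Q^{-2}$ against the trivial bound once $N$ and $T$ are suitably balanced. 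The Type I contributions reduce to nontrivial bounds on short character sums $\sum_{y < n \le y+h} \chi(n)$ of Burgess-Huxley quality.

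The main obstacle, and the origin of the exponent $1/6$, is the Type I estimate: optimising the interplay between Huxley's short-interval character sum bound and the large sieve produces exactly the constraint $Q^2 \le h/x^{1/6+\epsilon}$; without the Generalized Riemann Hypothesis this threshold is the best known. Within this range, both the Type I and Type II contributions yield a total saving of $(\log x)^{-A}$ after summing over the $O((\log x)^{O(1)})$ Heath-Brown pieces, and the stated bound follows.
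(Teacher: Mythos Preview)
The paper does not give a proof of this lemma at all: it is simply quoted as a black box from \cite{Kou}, with the preamble ``The main input to our result is the following estimate about primes in short arithmetic progressions, proven in \cite{Kou}.'' Your proposal begins the same way, invoking \cite{Kou}, so at the level of what the present paper actually does you are in complete agreement.

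Where you go further is in supplying a sketch of the argument in \cite{Kou}. The broad outline---orthogonality and reduction to primitive characters, Heath-Brown's identity, a Type~I/Type~II split, and the hybrid large sieve for the bilinear pieces---is correct and is indeed the backbone of that paper. Two small caveats on the sketch itself. First, the exponent $1/6$ does not arise from Burgess-type character sum bounds in the Type~I range; it is inherited from the classical almost-all short-interval result (zero-density estimates of Huxley type give primes in $[y,y+h]$ for almost all $y$ once $h\ge x^{1/6+\epsilon}$), and the point of \cite{Kou} is to preserve this threshold uniformly in $q$ up to $Q$ with $Q^2\le h/x^{1/6+\epsilon}$. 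Second, the phrase ``Burgess-Huxley quality'' is imprecise here; the Type~I input is really a mean-value bound for Dirichlet polynomials twisted by characters, not a pointwise character-sum estimate. These are matters of attribution rather than genuine gaps, and since the paper itself defers entirely to \cite{Kou}, your treatment is if anything more informative than what appears in the text.
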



\begin{lma}\label{W}
Fix $\epsilon>0$, $A\ge1$ and two integers $d\ge2$ and $m\in\{0,1,\dots,d\}$. Given a $d$-tuple $\bs p$ in the set
\[
\CP_d'(x) := \{(p_1,\dots,p_d): x<p_1\le 2x,\ |p_{j+1}-p_j-1|<2\sqrt{p_j}\ (1\le j\le d)\ \text{with}\ p_{d+1}=p_1\},
\]
we let
\[
w_{\bs p} =  \prod_{j=1}^m  \frac{1}{\sqrt{p_j}} \sqrt{1-\left(\frac{p_j+1-p_{j+1}}{2\sqrt{p_j}}\right)^2} .
\]
Then,
\[
\sum_{q\le x^{1/6-\epsilon}} q^{d-1} \max_{\bs a\in ((\Z/q\Z)^*)^d}
	\left|\sum_{\substack{\bs p \in \CP_d'(x) \\ \bs p\equiv \bs a\mod q} } w_{\bs p}
	 - \frac{I_{d,m}}{\phi(q)^d}  \cdot \int_x^{2x} \frac{u^{(d-m-1)/2} \dee u}{(\log u)^d} \right|
	 \ll_{A,\epsilon,d} \frac{x^{(d-m+1)/2 }}{(\log x)^A} 	
\]
for all $x\ge3$, where
\[
I_{d,m} = 2^{d-1} \idotsint\limits_{\substack{|t_j|\le 1\ (1\le j\le d) \\ t_1+\cdots+t_d=0 }}
		\prod_{j=1}^m \sqrt{1-t_j^2}  \ \dee t_1\cdots \dee t_{d-1}  .
\]
\end{lma}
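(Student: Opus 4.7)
The plan is to sum over the $d$ primes iteratively -- $p_d, p_{d-1}, \ldots, p_2$ in short intervals of length $H \asymp \sqrt{x}$ (dictated by $|p_{j+1} - p_j - 1| < 2\sqrt{p_j}$), and finally $p_1 \in (x, 2x]$ (via the usual Bombieri--Vinogradov theorem for the long range, valid here since $q \le x^{1/6-\epsilon}$) -- replacing each sum by an integral plus a controlled error. The basic iterative step generalizes Lemma \ref{W2}: by partial summation applied to $\theta(y; q, a) = \sum_{p \le y,\, p \equiv a \mod q}\log p$, for a smooth weight $f$ on $(y_0, y_0+H]$ with $\|f\|_\infty + H\|f'\|_\infty \ll V$ and $(a, q) = 1$,
\[
\sum_{\substack{y_0 < p \le y_0+H \\ p \equiv a \mod q}} f(p) = \frac{1}{\phi(q)}\int_{y_0}^{y_0+H} \frac{f(y)}{\log y} \dee y + O\left(\frac{V}{\log x}\Bigl(E(y_0, H; q) + \frac{1}{H}\int_{y_0}^{y_0+H} E(y_0, y-y_0; q) \dee y\Bigr)\right).
\]
At each stage, the current weight is a product of factors of $w_{\bs p}$ and $1/\log$-factors accumulated from previous integrations; one verifies that $V \ll x^{-1/2}$ throughout.

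After all $d$ iterations, the main term becomes $\phi(q)^{-d} \int_{\CP_d'(x)} \frac{w_{\bs p}}{\prod_{j=1}^d \log p_j} \dee \bs p$. I would evaluate this via the change of variables $u = p_1$ and $t_j = (p_j + 1 - p_{j+1})/(2\sqrt{p_j})$ for $j = 1, \ldots, d-1$, with Jacobian $2^{d-1}\prod_{j=1}^{d-1}\sqrt{p_j}$. Since $p_j = u + O(\sqrt{x})$ within $\CP_d'(x)$, one has $\sqrt{p_j} = \sqrt{u}(1 + O(x^{-1/2}))$ and $\log p_j = \log u + O(x^{-1/2})$, and the cyclic constraint $t_d := (p_d + 1 - p_1)/(2\sqrt{p_d}) \in (-1, 1)$ becomes $t_d = -(t_1 + \cdots + t_{d-1}) + O(x^{-1/2})$, matching $t_1 + \cdots + t_d = 0$ in the limit. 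Performing the $t_j$-integrations then yields $\frac{I_{d,m}}{\phi(q)^d}\int_x^{2x} \frac{u^{(d-m-1)/2}}{(\log u)^d} \dee u$ plus an error of order $x^{(d-m)/2}/(\phi(q)^d(\log x)^d)$, which is negligible after multiplying by $q^{d-1}$ and summing over $q \le x^{1/6-\epsilon}$.

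For the error bound, the residual $R_k$ from stage $k$ has the form $\phi(q)^{-(k-1)} \sum_{p_1,\ldots,p_{d-k}} \CE^{(k)}(\ldots; q, a_{d-k+1})$ (where at stage $d$ no sum remains). The remaining $d-k$ prime sums are bounded trivially by Brun--Titchmarsh: $\ll x/(q\log x)$ for $p_1$ and $\ll \sqrt{x}/(q\log x)$ for each of $p_2, \ldots, p_{d-k}$. After multiplying by $q^{d-1}$ and taking the max over $\bs a$, the polynomial-in-$q$ factors cancel (using $q/\phi(q) \asymp 1$), leaving a quantity of the form $\sum_{q \le x^{1/6-\epsilon}} \max_a (\text{integrals of } E(y, H; q))$. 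Since $q^2 \le x^{1/3-2\epsilon} \ll H/x^{1/6+\epsilon}$ for $H \asymp \sqrt{x}$, Lemma \ref{bv-short} applies and yields the desired $O(x^{(d-m+1)/2}/(\log x)^A)$ bound.

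The main obstacle is the careful bookkeeping throughout this iteration: verifying the $V \ll x^{-1/2}$ bound on smooth weights at each stage, tracking the exact powers of $q$, $\sqrt{x}$, and $\log x$ accumulated from main-term prefactors and Brun--Titchmarsh bounds for the remaining prime sums, and ensuring they combine with the $q^{d-1}$ weight to leave exactly the single sum over $q$ of short-interval discrepancies needed for Lemma \ref{bv-short}. A secondary technical point is that the interval constraining $p_d$ is the intersection of short intervals around $p_{d-1}$ and around $p_1 - 1$ (via the cyclic condition $p_{d+1} = p_1$); both have length $\asymp \sqrt{x}$, and one must check that their intersection still has length $\asymp \sqrt{x}$ so that the basic step applies uniformly across the iteration.
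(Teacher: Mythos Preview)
Your overall strategy---iteratively replace the prime sums by integrals and control the errors via Brun--Titchmarsh plus Lemma~\ref{bv-short}---is the same as the paper's, and your main-term computation via the change of variables $u=p_1$, $t_j=(p_j+1-p_{j+1})/(2\sqrt{p_j})$ is correct. However, there is a genuine gap in the claim ``$V\ll x^{-1/2}$ throughout.'' The weight factors $\sqrt{1-t_j^2}$ have $\partial/\partial p_{j+1}\bigl(\sqrt{1-t_j^2}\bigr)\asymp t_j\big/\bigl(\sqrt{x}\sqrt{1-t_j^2}\bigr)$, which blows up as $|t_j|\to 1$. Since $H\asymp\sqrt{x}$, the quantity $H\|f'\|_\infty$ is therefore \emph{not} uniformly bounded near the boundary of $\CP_d'(x)$, and your partial-summation error estimate fails there. (The same issue affects the weight's dependence on $p_j$ via the factor $\sqrt{1-t_{j-1}^2}$.)

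The paper circumvents this not by partial summation but by discretization: it partitions the $(p_1,t_2,\dots,t_d)$-space into boxes of width $\eta=(\log x)^{-B}$, restricts to boxes with $1-|r_j\eta|\gg\eta$ (throwing away an $O(\eta)$-thick boundary layer whose trivial contribution is acceptable), and then on each box replaces $w_{\bs p}$ by a constant via the Mean Value Theorem---the restriction ensures the derivative is $O(\eta^{-1/2})$, giving an error $O(\sqrt{\eta})$ per factor. After this freezing, the prime counting is \emph{unweighted} (just $\sum\log p_j$ on short intervals), and the iterative replacement by $2\eta\sqrt{x_1}/\phi(q)$ proceeds without any smoothness hypothesis on the weight. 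Your partial-summation route can be repaired by inserting the same boundary truncation before you begin the iteration, but as written the bound on $V$ is false and the argument does not go through.
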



\begin{proof} We fix a parameter $B=B(A)$ and set $\eta=1/(\log x)^B$. Furthermore, we set $N = \fl{(\log x)^B}-1$, so that $(N+1)\eta\le 1<(N+2)\eta$. Note that if $\bs p\in \CP_d'(x)$, then $|p_{j+1}-p_j|\ll\sqrt{p_j}$ for all $j$. In particular, $p_{j+1}\asymp p_j$, which implies that $p_j\asymp p_1\asymp x$ for all $j$. So, we find that $|p_{j+1}-p_j|\ll\sqrt{x}$ and, consequently, $|p_j-p_1|\ll\sqrt{x}$ for all $j\in\{1,\dots,d\}$. In particular, $\sqrt{p_j}= \sqrt{p_1}  + O(1)$ by the Mean Value Theorem.
 If we let
\[
\CQ = \{\bs p: x<p_1\le (1+N\eta) x,\quad
			|p_{j+1}-p_j|<2\sqrt{p_1} N\eta
				\ (1\le j\le d)\} ,
\]
with the usual convention that $p_{d+1}=p_1$, then we see that $\CQ\subset \CP_d'(x)$ and that
\[
\sum_{\substack{\bs p \in \CP_d'(x) \setminus \CQ \\ \bs p\equiv a\mod{q} }}
	 \prod_{j=1}^m  \frac{1}{\sqrt{p_j}} \sqrt{1-\left(\frac{p_j+1-p_{j+1}}{2\sqrt{p_j}}\right)^2}
		\ll \frac{x^{(d-m+1)/2}}{q^d\cdot (\log x)^B} .
\]
For each $\bs r\in ([1,N]\times[-N+1,N]^{d-1})\cap\Z^d$ with $|r_2+\cdots+r_d|\le N$, we define
\als{
\CQ(\bs r)
	= \Bigg\{\bs p: &\  x(1+(r_1-1)\eta)<p_1\le x(1+r_1\eta), \\
			&  (r_{j+1}-1)\eta < \frac{p_{j+1}-p_j}{2\sqrt{x(1+r_1\eta)}} \le  r_{j+1} \eta
				\ (1\le j < d) \Bigg\} .
}
If $r_{d+1}:= -(r_2+\cdots+r_d)$, then
\[
\frac{p_{d}-p_{d+1}}{2 \sqrt{x(1+r_1 \eta)}}
	= \frac{p_d-p_1}{2 \sqrt{x(1+r_1 \eta)}}
	= \sum_{j=1}^{d-1} \frac{p_{j+1} - p_j}{2 \sqrt{x(1+r_1 \eta)}}
	= \sum_{j=2}^d r_j\eta + O(\eta)
	= -\eta r_{d+1} + O(\eta) ,
\]
Let $\CH$ be the set of $\bs r\in ([1,N]\times[-N+C,N-C]^{d-1})\cap\Z^d$ with $|r_{d+1}|\le N-C$, where $C$ is some large constant, and call $\CQ'$ the union of $\CQ(\bs r)$ over $\bs r\in \CH$. If $C$ is large enough, then it is easy to see that $\CQ'\subset \CQ$. Moreover, we have that
\[
\sum_{\substack{\bs p \in \CQ \setminus \CQ' \\ \bs p\equiv a\mod{q} }} w_{\bs p}
		\ll \frac{x^{(d-m+1)/2}}{q^d\cdot (\log x)^B} .
\]
Since $|\CH|\asymp \eta^{-d}$, it suffices to show that
\eq{W e1}{
\sum_{q\le x^{1/6-\epsilon}}
	q^{d-1} \max_{\bs a\in ((\Z/q\Z)^*)^d}
		\left|\sum_{\substack{\bs p \in  \CQ(\bs r) \\ \bs p\equiv a\mod{q} }} w_{\bs p}
			- \frac{2^{d-1} I(\bs r)}{\phi(q)^d} \right|	
	\ll \frac{\eta^d x^{(d-m+1)/2}}{(\log x)^{A}} ,
}
where, for each $\bs r = (r_1, \dots, r_d) \in \CH$, we define
\[
x_1=x+r_1\eta x
\]
and
\[
I(\bs r) = \int_{x_1-\eta x}^{x_1}
			\frac{u^{(d-m-1)/2}\dee u}{(\log u)^d}
			\idotsint\limits_{\substack{(r_{j+1}-1)\eta\le t_j\le r_{j+1}\eta \\
				(1\le j\le d-1) \\ t_1+\cdots+t_d=0}}
		\, \prod_{j=1}^m \sqrt{1-t_j^2}  \ \dee t_1\cdots \dee t_{d-1} .
\]
If $\bs r$ and $x_1$ are as above and $\bs p\in\CQ(\bs r)$, then
\[
\frac{p_{j+1}-p_j-1}{2\sqrt{p_j}}
	= \frac{\eta r_{j+1}\sqrt{x_1} + O(\eta\sqrt{x_1})}{(1+O(\eta))\sqrt{x_1}}
		= \eta r_{j+1} + O(\eta) \quad(1\le j\le d) ,
\]
where we used the fact that $r_j\ll N\asymp 1/\eta$. Thus, if $C$ is large enough, our assumption that $|r_j|\le N-C=1/\eta-C+O(1)$ implies that $|1-\eta|r_{j+1}|\gg \eta$. Applying the Mean Value Theorem we then find that
\als{
\prod_{j=1}^m \sqrt{1-\left(\frac{p_{j+1}-p_j-1}{2\sqrt{p_j}}\right)^2 	}
	&= \prod_{j=1}^m \sqrt{1-(r_{j+1}\eta)^2}  + O\left(\sum_{j=1}^m \frac{\eta}{\sqrt{1-|r_{j+1}\eta|}} \right) \\
	&= \frac{1}{\eta^{d-1}}
		\idotsint\limits_{\substack{(r_{j+1}-1)\eta\le t_j\le r_{j+1}\eta \\ (1\le j\le d-1) \\ t_1+\cdots+t_d=0 }}
		\, \prod_{j=1}^m \sqrt{1-t_j^2}  \ \dee t_1\cdots \dee t_{d-1}
		 	+ O(\sqrt{\eta})
}
for all $\bs p\in \CQ(\bs r)$, where the condition $t_d=-(t_1+\cdots+t_{d-1})$ comes from the fact that $r_{d+1} = - \sum_{j=2}^d r_j$. Moreover, we have that
\als{
\prod_{j=1}^m \frac{1}{\sqrt{p_j}}
	&= \frac{(\log p_1)\cdots (\log p_d)}{(\log x_1)^dx_1^{m/2}} (1+O(\eta)) \\
	&= \frac{1}{\eta x \cdot x_1^{(d-1)/2}}
		\left(\prod_{j=1}^d \log p_j\right)
		\int_{x_1-\eta x}^{x_1}
			\frac{u^{(d-m-1)/2}}{(\log u)^d}  \dee u +O(\eta x^{-m/2}) .
}
Therefore, we conclude that
\[
\sum_{\substack{\bs p \in \CQ(\bs r)\\ \bs p\equiv a\mod{q} }}
	 \prod_{j=1}^m  \frac{1}{\sqrt{p_j}} \sqrt{1-\left(\frac{p_j+1-p_{j+1}}{2\sqrt{p_j}}\right)^2} \\
	 = \frac{I(\bs r)\cdot S(\bs r;q,\bs a)}{\eta^d x x_1^{(d-1)/2} }
	 	+ O\left(\frac{\eta^{d+1/2} x^{(d-m+1)/2}}{q^d} \right) ,
\]
where
\[
S(\bs r;q,\bs a) = \sum_{\substack{ \bs p\in \CQ(\bs r) \\ \bs p\equiv \bs a\mod q}} \prod_{j=1}^d \log p_j .
\]
So, taking $B\ge2A+2$, we see that \eqref{W e1} is reduced to showing that
\[
\sum_{q\le x^{1/6-\epsilon}}
	q^{d-1} \max_{\bs a\in ((\Z/q\Z)^*)^d}
		\left| \frac{S(\bs r;q,\bs a)}{\eta^d x x_1^{(d-1)/2}} - \frac{2^{d-1}}{\phi(q)^d} \right| \ll \frac{\eta^d x^{(d-m+1)/2}}{I(\bs r)\cdot (\log x)^A} .
\]
Since
\[
I(\bs r) \ll  \frac{\eta^d x^{(d-m+1)/2}}{(\log x)^d},
\]
it is enough to prove that
\eq{W e2}{
	\sum_{q\le x^{1/6-\epsilon}}
	q^{d-1} \max_{\bs a\in ((\Z/q\Z)^*)^d}
		\left| S(\bs r;q,\bs a)  - \frac{2^{d-1} \eta^d x x_1^{(d-1)/2}}{\phi(q)^d} \right| 	
			\ll \frac{ \eta^d x^{(d+1)/2}}{(\log x)^A} .
}

In order to prove \eqref{W e2}, we start by observing that
\[
S(\bs r;q,\bs a)	
	= \sum_{\substack{ x_1-\eta x<p_1\le x_1\\ p_1\equiv a_1\mod q}} (\log p_1)
		\sum_{\substack{ r_2- 1 < \frac{p_2 - p_1}{2\eta\sqrt{x_1}} \le r_2 \\ p_2\equiv a_2\mod q} } (\log p_2)
			\cdots
				\sum_{\substack{r_d- 1 < \frac{p_d - p_{d-1}}{2\eta\sqrt{x_1}} \le r_d \\ p_d\equiv a_d\mod q} } \log p_d
\]
We replace the last sum by $2\eta\sqrt{x_1}/\phi(q)$, which introduces a total error term of size
\als{
\sum_{\substack{ x_1-\eta x<p_1\le x_1 \\ p_1\equiv a_1\mod q}} (\log p_1)
		\sum_{\substack{ r_2-1 < \frac{p_2 - p_1}{2\eta\sqrt{x_1}} \le r_2 \\ p_2\equiv a_2\mod q} } (\log p_2)
			\cdots \sum_{\substack{ r_{d-1}-1 < \frac{p_{d-1} - p_{d-2}}{2\eta\sqrt{x_1}} \le r_{d-1} \\ p_{d-1}\equiv a_{d-1}\mod q} }
				 (\log p_{d-1}) \\
		\times E(p_{d-1}+2(r_d-1)\eta \sqrt{x_1},2\eta\sqrt{x_1};q ) ,
}
where $E(x,h;q)$ is defined by \eqref{def-E}. We estimate this error term by fixing $p_{d-1}$ and summing first over $p_1,\dots,p_{d-2}$. Note that $p_{d-1}$ lies in an interval of size $O(\eta x)$ around $x_1$ and, given $p_{d-1}$, the primes $p_1,\dots,p_{d-2}$ lie in intervals of length $O(\eta\sqrt{x})$ around $p_{d-1}$. Using the Brun-Titschmarsh inequality for the sums over $p_1, \dots, p_{d-2}$, we find that the error term is bounded by some absolute constant times
\[
\left(\frac{\eta\sqrt{x}}{\phi(q)}\right)^{d-2}
	\sum_{\substack{x_1-O(\eta x)<p_{d-1}\le x_1+O(\eta x) \\ p_{d-1}\equiv a_{d-1}\mod q}} (\log p_{d-1})
	E(p_{d-1}+2(r_d-1)\eta\sqrt{x_1},2\eta\sqrt{x_1} ; q ) .
\]
For each $y$ within $\eta^2\sqrt{x_1}$ of $p_{d-1}+2(r_d-1)\eta\sqrt{x_1}$, the Brun-Titschmarsh inequality implies that
\[
E(y,2\eta \sqrt{x_1};q) - E(p_{d-1}+2(r_d-\eta)\sqrt{x_1},2\eta\sqrt{x};q )
	\ll  \frac{\eta^2 \sqrt{x}}{\phi(q)} .
\]
Therefore
\als{
E(p_{d-1}+2(r_d-\eta)\sqrt{x_1},2\eta\sqrt{x_1};q )
	 = \frac{1}{\eta^2\sqrt{x_1}} \int_{p_{d-1}+2(r_d-1)\eta\sqrt{x_1}}^{p_{d-1}+ 2(r_d-1)\eta\sqrt{x_1}+\eta^2\sqrt{x_1}}
		E(y,2\eta \sqrt{x_1};q) \dee y  \\
	\qquad\qquad	+ O\left(\frac{\eta^2 \sqrt{x}}{\phi(q)}\right) .
}
for $y \in [p_{d-1}+2(r_d-1)\eta\sqrt{x_1}, p_{d-1}+ 2(r_d-1)\eta\sqrt{x_1}+\eta^2\sqrt{x_1}]$. Summing this over $p_{d-1} \in[x_1-O(\eta x),x_1+O(\eta x)]$ and reversing the sums, we find that the total error introduced by replacing the sum over $p_d$ by $2 \eta \sqrt{x_1}/\phi(q)$ in $S(\bs r;q,\bs a)$ is
\als{
&\ll \frac{1}{\eta^2\sqrt{x}}\cdot \left(\frac{\eta\sqrt{x}}{\phi(q)}\right)^{d-2} \int_{x/2}^{3x} E(y,2\eta\sqrt{x_1};q)
	\sum_{\substack{ -\eta^2\sqrt{x} \le p_{d-1} +2(r_d-1)\eta\sqrt{x_1} -y  \le 0 \\ p_{d-1}\equiv a_{d-1}\mod q}} \log p_{d-1}  \;\dee y \\
&\qquad + \frac{\eta^{d+1} x^{(d+1)/2} }{\phi(q)^d} \\
&\ll   \frac{(\eta \sqrt{x})^{d-2}}{\phi(q)^{d-1}}
		 \int_{x/2}^{3x} E(y,2\eta\sqrt{x_1};q)   \dee y  + \frac{\eta^{d+1} x^{(d+1)/2} }{\phi(q)^d} .
}
In conclusion, we have proven that
\als{
S(\bs r;q,\bs a)	
	= \frac{2\eta\sqrt{x_1}}{\phi(q)}
		\sum_{\substack{ x_1-\eta x<p_1\le x_1 \\ p_1\equiv a_1\mod q}} (\log p_1)
		\sum_{\substack{ r_2-1 < \frac{p_2 - p_1}{2\eta\sqrt{x_1}} \le r_2 \\ p_2\equiv a_2\mod q} } (\log p_2)
			\cdots
				\sum_{\substack{ r_{d-1}-1 < \frac{p_{d-1} - p_{d-2}}{2\eta\sqrt{x_1}} \le r_{d-1}
					\\ p_{d-1}\equiv a_{d-1}\mod q} } \log p_{d-1} \\
		+ O\left(  \frac{(\eta \sqrt{x})^{d-2}}{\phi(q)^{d-1}}
		 \int_{x/2}^{3x} E(y,2\eta\sqrt{x_1};q)   \dee y  +  \frac{\eta^{d+1} x^{(d+1)/2} }{\phi(q)^d}
		 	\right).
}
Next, we replace the sum over $p_{d-1}$ by $2\eta\sqrt{x_1}/\phi(q)$. The total error term produced can be shown to be
\[
\ll    \frac{(\eta \sqrt{x})^{d-2}}{\phi(q)^{d-1}}
		 \int_{x/2}^{3x} E(y,2\eta\sqrt{x_1};q)   \dee y  +   \frac{\eta^{d+1} x^{(d+1)/2} }{\phi(q)^d}
\]
by following the above argument. We continue this way to deduce that
\als{
S(\bs r;q,\bs a)	
	&= \left(\frac{2\eta\sqrt{x_1}}{\phi(q)} \right)^{d-1}
		\sum_{\substack{ x_1-\eta x<p_1\le x_1 \\ p_1\equiv a_1\mod q}} (\log p_1) \\
	&\quad	+ O\left(   \frac{(\eta \sqrt{x})^{d-2}}{\phi(q)^{d-1}}
		 \int_{x/2}^{3x} E(y,2\eta\sqrt{x_1};q)   \dee y  +   \frac{\eta^{d+1} x^{(d+1)/2} }{\phi(q)^d}  \right) \\
	&= \frac{2^{d-1}\eta^d x x_1^{(d-1)/2}}{\phi(q)^d}
		 + O\left(  \frac{(\eta \sqrt{x})^{d-1}}{\phi(q)^{d-1}} E(x_1-\eta x, \eta x;q) \right) \\
		&\qquad + O\left(\frac{(\eta \sqrt{x})^{d-2}}{\phi(q)^{d-1}} \int_x^{3x} E(y,2\eta\sqrt{x_1};q)   \dee y
		 	+  \frac{\eta^{d+1} x^{(d+1)/2} }{\phi(q)^d}  \right) .
}
In view of the above formula, relation \eqref{W e2} follows by the Bombieri-Vinogradov theorem and by Lemma \ref{bv-short} with $Q = x^{1/6 - \epsilon}$.
\end{proof}


\bibliographystyle{abbrv}

\def\cprime{$'$}

\end{document}